\numberwithin{equation}{section}
\definecolor{MyBlue}{cmyk}{1,0.13,0,0.63}
\definecolor{MyGreen}{cmyk}{0.91,0,0.88,0.52}
\definecolor{MyRed}{rgb}{.6,0,0}
\newcommand{\mylinkcolor}{MyBlue}
\newcommand{\mycitecolor}{MyGreen}
\newcommand{\myurlcolor}{MyRed}
\def\@endtheorem{\endtrivlist}
\theoremstyle{plain}
\newtheorem{thm}{Theorem}[section]
\newtheorem*{main*}{Main Theorem}
\newtheorem{lem}[thm]{Lemma}
\newtheorem{prop}[thm]{Proposition}
\newtheorem{coro}[thm]{Corollary}
\theoremstyle{definition}
\newtheorem{defn}[thm]{Definition}
\newtheorem{remark}[thm]{Remark}
\newtheoremstyle{myAssumption}
  {\topsep}
  {\topsep}
  {\normalfont}
  {0pt}
  {\bfseries}
  {.}
  {5pt plus 1pt minus 1pt}
  {\thmname{#1}\thmnote{ #3}}
\theoremstyle{myAssumption}
\newtheorem*{assumption*}{Assumption}
\newcommand{\customlabel}[2]{%
   \protected@write \@auxout {}{\string \newlabel {#1}{{#2}{\thepage}{#2}{#1}{}} }%
   \hypertarget{#1}{}
}
\renewcommand{\eqref}[1]{\labelcref{#1}}
\crefname{thm}{Theorem}{Theorems}
\crefname{lem}{Lemma}{Lemmas}
\crefname{prop}{Proposition}{Propositions}
\crefname{coro}{Corollary}{Corollaries}
\crefname{defn}{Definition}{Definitions}
\crefname{remark}{Remark}{Remarks}
\def\thm@space@setup{%
  \thm@preskip=4pt plus 2pt minus 2pt
  \thm@postskip=\thm@preskip
}
\renewenvironment{proof}[1][\proofname]{\par
  \pushQED{\qed}%
  \normalfont \topsep4\p@\relax
  \trivlist
  \item[\hskip\labelsep
        \itshape
    #1\@addpunct{.}]\ignorespaces
}{%
  \popQED\endtrivlist\@endpefalse
}
\setlist{topsep=4pt plus 2pt minus 2pt,partopsep=0pt,itemsep=2pt plus 2pt minus 2pt,parsep=0.5\parskip}
\newcommand{\MR}[1]{}
\let\OLDthebibliography\thebibliography
\renewcommand\thebibliography[1]{
  \addcontentsline{toc}{section}{\refname}
  \OLDthebibliography{#1}
  \setlength{\parskip}{0pt}
  \setlength{\itemsep}{0pt plus 0.3ex}
}
\newcommand{\N}{\mathbb{N}}
\newcommand{\R}{\mathbb{R}}
\newcommand{\C}{\mathbb{C}}
\newcommand{\Z}{\mathbb{Z}}
\newcommand{\mH}{\mathcal{H}}
\newcommand{\D}{\mathcal{D}}
\newcommand{\E}{\mathcal{E}}
\newcommand{\A}{\mathcal{A}}
\newcommand{\B}{\mathcal{B}}
\newcommand{\mK}{\mathcal{K}}
\newcommand{\mL}{\mathcal{L}}
\newcommand{\mO}{\mathcal{O}}
\newcommand{\mT}{\mathcal{T}}
\newcommand{\pS}{\mathcal{S}}
\newcommand{\pT}{\mathcal{T}}
\renewcommand{\Re}{\mathop{\textnormal{Re}}}
\renewcommand{\bar}[1]{\overline{#1}}
\DeclareMathOperator{\Dom}{Dom}
\DeclareMathOperator{\Ran}{Ran}
\DeclareMathOperator{\End}{End}
\DeclareMathOperator{\supp}{supp}
\DeclareMathOperator{\Index}{Index}
\DeclareMathOperator{\ind}{ind}
\DeclareMathOperator{\relind}{rel-ind}
\DeclareMathOperator{\ch}{ch}
\DeclareMathOperator{\spec}{spec}
\newcommand{\dvol}{\textnormal{dvol}}
\newcommand{\K}{K}
\newcommand*{\KK}{\relax\ifmmode{K\!K}\else{\itshape K\hspace{-.07em}K}\fi}
\newcommand{\SF}{\textnormal{sf}}
\newcommand{\til}[1]{\widetilde{#1}}
\newcommand{\la}{\langle}
\newcommand{\ra}{\rangle}
\newcommand{\into}{\hookrightarrow}
\newcommand{\bigmvert}{\,\big|\,}
\newcommand{\bundlefont}[1]{{\mathtt{#1}}}
\newcommand{\bF}{\bundlefont{F}}
\newcommand{\dx}[1][x]{\, \mathrm{d}#1}
\newcommand{\mattwo}[4]{
  \begin{pmatrix}#1&#2\\ #3&#4\end{pmatrix}
}
\DeclareFontFamily{OMX}{MnSymbolE}{}
\DeclareSymbolFont{MnLargeSymbols}{OMX}{MnSymbolE}{m}{n}
\DeclareFontShape{OMX}{MnSymbolE}{m}{n}{
    <-6>  MnSymbolE5
   <6-7>  MnSymbolE6
   <7-8>  MnSymbolE7
   <8-9>  MnSymbolE8
   <9-10> MnSymbolE9
  <10-12> MnSymbolE10
  <12->   MnSymbolE12
}{}
\DeclareFontShape{OMX}{MnSymbolE}{b}{n}{
    <-6>  MnSymbolE-Bold5
   <6-7>  MnSymbolE-Bold6
   <7-8>  MnSymbolE-Bold7
   <8-9>  MnSymbolE-Bold8
   <9-10> MnSymbolE-Bold9
  <10-12> MnSymbolE-Bold10
  <12->   MnSymbolE-Bold12
}{}
\let\llangle\@undefined
\let\rrangle\@undefined
\DeclareMathDelimiter{\llangle}{\mathopen}%
                     {MnLargeSymbols}{'164}{MnLargeSymbols}{'164}
\DeclareMathDelimiter{\rrangle}{\mathclose}%
                     {MnLargeSymbols}{'171}{MnLargeSymbols}{'171}
\newcommand{\lla}{\llangle}
\newcommand{\rra}{\rrangle}
\def\myTitle{Generalised Dirac--Schrödinger operators and the Callias Theorem}
\title{Generalised Dirac--Schrödinger operators \\ and the Callias Theorem}
\author{
Koen van den Dungen%
\footnote{Email: \texttt{kdungen@uni-bonn.de}}
\\[2mm]
{\small Mathematisches Institut}, 
{\small Universität Bonn}\\
{\small Endenicher Allee 60, D-53115 Bonn}
}
\date{}
\begin{document}

\maketitle

\begin{abstract}
\noindent
We consider generalised Dirac--Schrödinger operators, consisting of a self-adjoint elliptic first-order differential operator $\D$ with a skew-adjoint `potential' given by a (suitable) family of unbounded operators. 
The index of such an operator represents the pairing (Kasparov product) of the $\K$-theory class of the potential with the $\K$-homology class of $\D$. 
Our main result in this paper is a generalisation of the Callias Theorem: the index of the Dirac--Schrödinger operator can be computed on a suitable compact hypersurface. 
Our theorem simultaneously generalises (and is inspired by) the well-known result that the spectral flow of a path of relatively compact perturbations depends only on the endpoints. 

\vspace{\baselineskip}
\noindent
\emph{Keywords}: 
Dirac--Schrödinger operators; 
index theory; 
Callias Theorem.

\noindent
\emph{Mathematics Subject Classification 2020}: 
19K35, 
19K56, 
58J20. 
\end{abstract}

\section{Introduction}

Let $\D_M$ be the Dirac operator on an odd-dimensional locally compact smooth spin manifold $M$, which determines the $K$-homology fundamental class $[\D_M] \in \K_1(M)$. 
If $\Sigma_M \in \K^1(M)$ is any element in the $\K$-theory of $M$, there is a natural pairing $\langle~,~\rangle \colon \K^1(M) \times \K_1(M) \to \Z$ of $\K$-theory with $\K$-homology, often referred to as the \emph{index pairing}, which yields an integer  
\[
\big\langle \Sigma_M , [\D_M] \big\rangle \in \Z .
\]
We now wish to compute this integer in terms of an index pairing on a suitable hypersurface $N \subset M$ of codimension one. 
Assume that $U\subset M$ is an open subset with compact closure $K = \overline{U}$ and smooth boundary $N = \partial U$. 
Consider the natural map ${\iota_U}^* \colon \K_1(M) \to \K_1(U)$ which sends $[\D_M]$ to the class $[\D_U]$ of the restriction of the Dirac operator to $U$. 
The $\K$-homology boundary map 
\[
\partial \colon \K_1(U) \to \K_0(\partial U)
\]
sends the class $[\D_U]$ to the class $[\D_{\partial U}]$ of the Dirac operator on the boundary $\partial U$. 

Now suppose that the $\K$-theory class $\Sigma_M$ is the image of a class $\Sigma_U \in \K^1(U)$ under the natural map ${\iota_U}_* \colon \K^1(U) \to \K^1(M)$. 
Suppose furthermore that $\Sigma_U$ is the image of a class $\Sigma_{\partial U} \in \K^0(\partial U)$ under the $\K$-theory boundary map $\partial \colon \K^0(\partial U) \to \K^1(U)$. 
Then by naturality we have the equalities 
\begin{align*}
\big\langle \Sigma_M , [\D_M] \big\rangle 
&= \big\langle {\iota_U}_*\circ\partial(\Sigma_{\partial U}) , [\D_M] \big\rangle 
= \big\langle \partial(\Sigma_{\partial U}) , {\iota_U}_*([\D_M]) \big\rangle \\
&= \big\langle \Sigma_{\partial U} , \partial\circ{\iota_U}^*([\D_M]) \big\rangle 
= \big\langle \Sigma_{\partial U} , [\D_{\partial U}] \big\rangle .
\end{align*}

To summarise the preceding argument, if there exists a class $\Sigma_{\partial U} \in \K^1(\partial U)$ with $\Sigma_M = {\iota_U}_*\circ\partial(\Sigma_{\partial U})$, then the index pairing on the locally compact manifold $M$ can be computed from an index pairing on the compact hypersurface $\partial U$:
\begin{equation}
\label{eq:index_pairings}
\big\langle \Sigma_M , [\D_M] \big\rangle = \big\langle \Sigma_{\partial U} , [\D_{\partial U}] \big\rangle .
\end{equation}

There are two (at first sight rather different) instances in the literature where such a computation has been made. 
The first instance is in the case of \emph{Dirac--Schrödinger} (or \emph{Callias-type}) operators. 
These are operators of the form $\D_M-i\pS$, where the `potential' $\pS$ is a self-adjoint endomorphism on some auxiliary vector bundle (of finite rank) over $M$. Assuming that $\pS$ is invertible outside of $U\subset M$, the Dirac--Schrödinger operator $\D_M-i\pS$ is Fredholm and its index computes the index pairing of the $\K$-theory class of the potential with the $\K$-homology class of the Dirac operator: 
\[
\Index(\D_M-i\pS) = \big\langle [\pS] , [\D_M] \big\rangle \in \Z .
\]
The invertibility of $\pS$ outside $U$ ensures that $[\pS] = {\iota_U}_*([\pS|_U])$. Moreover, since we may perturb the potential on a compact subset without changing its $\K$-theory class, and since $\overline{U}$ is compact, we note that $j_*([\pS|_U]) = [\pS|_{\overline{U}}] = 0 \in \K^1(\overline{U})$, where $j_* \colon \K^1(U) \to \K^1(\overline{U})$ is induced from the inclusion $U\into\overline{U}$. By exactness of the sequence 
\[
\K^0(\partial U) \xrightarrow{\partial} \K^1(U) \xrightarrow{j_*} \K^1(\overline{U}) ,
\]
we therefore know that $[\pS|_U] = \partial(\Sigma_{\partial U})$ for some $\Sigma_{\partial U} \in \K^0(\partial U)$. 
In fact, as we will see in \cref{coro:classical_Callias}, we can explicitly identify $\Sigma_{\partial U}$ as the $\K$-theory class of the vector bundle $V_+$ over $\partial U$ obtained from the positive eigenspace of the invertible self-adjoint endomorphism $\pS|_{\partial U}$. 
The pairing $\langle [V_+] , [\D_{\partial U}] \rangle$ can be computed as the index of the operator $(\D_{\partial U})^+_+$, which is obtained by twisting the chiral Dirac operator on $\partial U$ with the vector bundle $V_+$. 
The equality \eqref{eq:index_pairings} therefore yields
\begin{equation}
\label{eq:Callias_Thm}
\Index(\D_M-i\pS) = \Index(\D_{\partial U})^+_+ .
\end{equation}
This result is known as the \emph{Callias Theorem}; the first version was proven by Callias \cite{Cal78} on Euclidean space and it has subsequently been generalised by various authors (see, for instance, \cite{Ang90,BM92,Ang93a,Rad94,Bun95,Kuc01,Gesztesy-Waurick16}). 

The second instance of \cref{eq:index_pairings} appears in the study of \emph{spectral flow}. 
Consider a `sufficiently continuous' family of self-adjoint Fredholm operators $\{\pS(x)\}_{x\in[0,1]}$ with invertible endpoints and with common domain on a Hilbert space $\mH$, such that $\pS(x)$ is a relatively compact perturbation of $\pS(0)$ (for each $x\in[0,1]$).
Then the spectral flow depends only on the endpoints and is given by (see \cite[Theorem 3.6]{Les05} and \cite[Proposition 2.5]{Wah08})
\begin{align}
\label{eq:sf_rel-ind}
\SF\big(\{\pS(x)\}_{x\in[0,1]}\big) &= \relind\big(P_+(\pS(1)),P_+(\pS(0))\big) .
\end{align}
Here the left-hand-side is the spectral flow of the family $\{\pS(x)\}_{x\in[0,1]}$, 
and the right-hand-side is given by the relative index of the pair of positive spectral projections associated to $\pS(1)$ and $\pS(0)$. 
To view this equality in the form of \cref{eq:index_pairings}, let $M=\R$, $U=(0,1)$, and $N=\partial U=\{0\}\cup\{1\}$, and extend $\pS$ to a family on $\R$. 
By the well-known `index = spectral flow' equality of Robbin--Salamon (see e.g.\ \cite{RS95,Wah07,AW11} and \cite[Corollary 5.16]{vdD19_Index_DS}), the spectral flow can be described as an index pairing on $\R$:
\begin{align*}
\SF\big( \{\pS(x)\}_{x\in\R} \big) 
&= \Index\big( \partial_x+\pS(\cdot) \big) 
= \big\langle [\pS(\cdot)] , [-i\partial_x] \big\rangle ,
\end{align*}
where $-i\partial_x$ is the standard Dirac operator on $\R$. 
Moreover, the assumption that $\pS(x)$ is a relatively compact perturbation of $\pS(0)$ (for each $x\in[0,1]$), combined with the compactness of $[0,1]$, implies that the operator $\pS(\cdot)|_{[0,1]}$ is a relatively compact perturbation of a constant invertible family. 
It follows that $j_*([\pS(\cdot)|_{(0,1)}]) = 0 \in \K^1([0,1])$ (where $j_*$ is induced from the inclusion $(0,1)\into[0,1]$) and therefore (by exactness, as in the case of Dirac--Schrödinger operators) $[\pS(\cdot)|_{(0,1)}] = \partial\big(\Sigma_{\{0\}\cup\{1\}}\big)$ for some element $\Sigma_{\{0\}\cup\{1\}} \in \K^0(\{0\}\cup\{1\}) \simeq \Z\oplus\Z$. 
We will see in \cref{coro:Callias_sf} that the relative index on the right-hand-side of \cref{eq:sf_rel-ind} is indeed obtained from an index pairing of $\Sigma_{\{0\}\cup\{1\}}$ with the $\K$-homology element $[\D_{\{0\}\cup\{1\}}] \in \K_0(\{0\}\cup\{1\})$, where the latter can be identified with $(-1)\oplus1\in\Z\oplus\Z$. 

Thus we have seen that both the Callias Theorem \eqref{eq:Callias_Thm} and the spectral flow result \eqref{eq:sf_rel-ind} can be viewed as a special case of the equality \eqref{eq:index_pairings}. 
Our goal in the present paper is to provide a common generalisation which unifies both these results. 
For this purpose, we now consider `generalised' Dirac--Schrödinger operators, which are again operators of the form $\D_M-i\pS(\cdot)$, where now the auxiliary vector bundle is of \emph{infinite} rank, and the `potential' $\pS(\cdot)$ consists of a family of (unbounded) self-adjoint operators $\{\pS(x)\}_{x\in M}$ on a fixed Hilbert space $\mH$.
(In fact, instead of $\mH$ we will more generally consider a Hilbert $C^*$-module over some auxiliary $C^*$-algebra, but in this introduction we limit our attention to the simpler case of a Hilbert space.)
Such operators were studied in \cite[\S8]{KL13} for suitably differentiable potentials, and in \cite{vdD19_Index_DS} for continuous potentials. 
It is known that the pairing of the $\K$-theory class of $\pS(\cdot)$ with the $\K$-homology class of $\D_M$ still equals the index of the Dirac--Schrödinger operator (\cite[Theorem 1.2]{KL13} and \cite[Theorem 5.15]{vdD19_Index_DS}): 
\[
\big\langle [\pS(\cdot)] , [\D_M] \big\rangle = \Index\left( \D-i\pS(\cdot) \right) .
\]
Now, under the additional assumption that the potential $\pS(\cdot)$ is given by a family of relatively compact perturbations (as in the case of the spectral flow result \eqref{eq:sf_rel-ind}), we again find that $[\pS(\cdot)] = {\iota_U}_*\circ\partial(\Sigma_{\partial U})$ for some $\K$-theory class $\Sigma_{\partial U} \in \K^0(\partial U)$.  
Hence \cref{eq:index_pairings} applies, and to obtain our desired generalisation of the Callias Theorem, it remains only to identify the class $\Sigma_{\partial U}$. 
We will see that this class can again (as in the spectral flow case) be described as a relative index of positive spectral projections, and our generalised Callias Theorem then provides the equality (we refer to \S\ref{subsec:Callias} for the precise statement)
\begin{align*}
\Index\big(\D-i\pS(\cdot)\big) 
= \big\langle \relind\big(P_+(\pS_N(\cdot)),P_+(\pT(\cdot))\big) , [\D_N] \big\rangle .
\end{align*}

Let us provide a brief summary of the contents of this paper. 
We start in \cref{sec:background} with some background material, describing both the classical Callias Theorem as well as the study of the spectral flow for relatively compact perturbations (in particular, we prove \cref{eq:sf_rel-ind}). 
In \cref{sec:Callias} we describe our main assumptions, definitions, and the main results (the proofs are postponed to later sections). In particular, we state in \S\ref{subsec:Callias} our generalisation of the classical Callias Theorem, and show in \S\ref{subsec:special_cases} that we recover the Callias Theorem \eqref{eq:Callias_Thm} and the spectral flow result \eqref{eq:sf_rel-ind}. 
The remaining sections are devoted to the proofs. 
In \cref{sec:DS} we first consider generalised Dirac--Schrödinger operators. 
As an important tool, we present a relative index theorem for such operators in \S\ref{sec:rel_index_thm}, under fairly general assumptions. \S\ref{sec:Fred_index} and \S\ref{sec:Kasp_prod} then provide sufficient conditions under which we can prove that a Dirac--Schrödinger operator is Fredholm, and that its index equals the pairing between the $\K$-theory and $\K$-homology classes. 
\cref{sec:proof} finally provides the proof of the generalised Callias-type theorem. 

Throughout the body of this article, we will work with Hilbert $C^*$-modules over $C^*$-algebras (rather than just Hilbert spaces). 
An important step in our main result is to ensure that the relative index is well-defined, for which we require several operator-theoretic facts that are known for operators on Hilbert spaces, but (to the author's best knowledge) have not yet appeared in the literature for operators on Hilbert $C^*$-modules. 
We therefore include an Appendix, in which we generalise several (partly well-known) operator-theoretic results from Hilbert spaces to Hilbert $C^*$-modules. 
We mention here a few of these results, which may also be of independent interest:
\begin{itemize}
\item 
The composition of a $*$-strongly convergent sequence of adjointable operators with a compact operator yields a norm-convergent sequence (\cref{lem:s-limit_times_cpt}). 
\item 
Let $T$ be regular and self-adjoint. Then any relatively $T$-compact operator $R$ is relatively $T$-bounded with arbitrarily small relative bound (\cref{prop:rel_cpt_rel_bdd_0}). Consequently, if $R$ is symmetric, then $T+R$ is again regular and self-adjoint on $\Dom(T)$ (\cref{prop:rel_cpt_pert_reg_sa}). 
\item 
Let $T$ be regular and self-adjoint, and let $R$ be symmetric and relatively $T$-compact. 
Let $f\in C(\R)$ be a continuous function for which the limits $\lim_{x\to\pm\infty}f(x)$ exist. 
Then $f(T+R)-f(T)$ is compact (\cref{prop:cpt_diff_cts_funct}). 
Furthermore, if $T$ and $T+R$ are both invertible, also $P_+(T+R)-P_+(T)$ is compact (where $P_+(T)$ denotes the positive spectral projection of $T$). 
\end{itemize}

\subsection{Notation}

Throughout this paper, let $A$ be a $\sigma$-unital $C^*$-algebra, and let $E$ be a (possibly $\Z_2$-graded) countably generated Hilbert $C^*$-module over $A$ (or Hilbert $A$-module for short) with $A$-valued inner product $\la\cdot|\cdot\ra$. 
(The reader unfamiliar with $C^*$-modules may consider the special case $A=\C$, so that $E$ is simply a separable Hilbert space. For an introduction to Hilbert $C^*$-modules, we refer to \cite{Lance95}.) 
For the inner product of an element $\psi\in E$ with itself we use the convenient short-hand notation
\[
\lla \psi \rra := \la \psi | \psi \ra .
\]
The norm of $\psi$ is then given by $\|\psi\| = \|\lla\psi\rra\|^{\frac12}$. 

The space of adjointable linear operators $E\to E$ is denoted by $\mL_A(E)$.
For any $\psi,\eta\in E$, the rank-one operators $\theta_{\psi,\eta}$ are defined by $\theta_{\psi,\eta}\xi := \psi \la\eta|\xi\ra$ for $\xi\in E$. 
The compact operators $\mK_A(E)$ are given by the closure of the space of finite linear combinations of rank-one operators. 
For two Hilbert $A$-modules $E_1$ and $E_2$, the adjointable linear operators $E_1\to E_2$ are denoted by $\mL_A(E_1,E_2)$. 

A densely defined operator $S$ is called \emph{regular} if $S$ is closed, the adjoint $S^*$ is densely defined, and $1+S^*S$ has dense range (note that on a Hilbert space, every closed operator is regular). 
A densely defined, closed, symmetric operator $S$ is regular and self-adjoint if and only if the operators $S\pm i$ are surjective \cite[Lemma 9.8]{Lance95}. 

Given a densely defined, symmetric operator $S$ on $E$, we can equip $\Dom S$ with the graph inner product $\la\psi|\psi\ra_S := \la(S\pm i)\psi|(S\pm i)\psi\ra = \la\psi|\psi\ra + \la S\psi|S\psi\ra$. The graph norm of $S$ is then defined as $\|\psi\|_S := \big\|\la\psi|\psi\ra_S\big\|^{\frac12} = \|(S\pm i)\psi\|$.

\section{Background}
\label{sec:background}

As described in the Introduction, our main theorem simultaneously generalises both the classical Callias Theorem \eqref{eq:Callias_Thm} and the spectral flow equality \ref{eq:sf_rel-ind}. 
In this section, we will introduce both these results. 

\subsection{The classical Callias Theorem}

The `classical Callias Theorem', which we aim to generalise, is a result which was first proven by Callias \cite{Cal78} on Euclidean space, who proved that the index of a Dirac--Schrödinger operator $\D-i\pS$ on $\R^{2n+1}$ can be computed on a sufficiently large sphere. 
This has become known as the Callias Theorem and has been generalised by various authors (see, for instance, \cite{Ang90,BM92,Ang93a,Rad94,Bun95,Kuc01,Gesztesy-Waurick16}), replacing Euclidean space by larger classes of Riemannian manifolds, and computing the index on a suitable hypersurface (which is the boundary of a compact subset). 
The Callias Theorem continues to be actively studied, 
with recent work considering for instance Callias-type operators on Lie manifolds \cite{CN14}, with degenerate potentials \cite{Kot15}, via cobordism invariance \cite{BS16}, on manifolds with boundary \cite{Shi17}, twisted by Hilbert $C^*$-bundles of finite type \cite{Cec20}, and associated to abstract spectral triples \cite{SS23}. 

We will cite here Anghel's version \cite{Ang93a} of the Callias Theorem. 
Let $M$ be a complete odd-dimensional oriented Riemannian manifold, and let $\D$ be a formally self-adjoint Dirac-type operator on a hermitian Clifford bundle $\Sigma\to M$. 
Let $\pS = \pS^* \in \Gamma^\infty(\End\Sigma)$ be a hermitian bundle endomorphism such that $\pS$ commutes with Clifford multiplication, $\pS$ and $[\D,\pS]$ are uniformly bounded, and there exists a compact subset $K\subset M$ such that $\pS$ is uniformly invertible on the complement of $K$. 

Without loss of generality, assume that $K$ has a smooth compact boundary $N$. 
On $\Sigma_N := \Sigma|_N$, we have a $\Z_2$-grading operator $\Gamma_N$ given by Clifford multiplication with the unit normal vector on $N$, which yields the decomposition $\Sigma_N = \Sigma_N^+\oplus\Sigma_N^-$. 
Consider a `restriction' $\D_N$ of $\D$ to $\Sigma_N$ (i.e., the principal symbol of $\D_N$ is obtained from the principal symbol of $\D$ by restricting from $TM$ to $TN$), which anticommutes with $\Gamma_N$. 
Since $\D_N$ is elliptic and $N$ is compact, $\D_N$ has compact resolvents. In particular, $\D_N$ is Fredholm, and we obtain a $\K$-homology class $[\D_N] \in \K^0(C(N)) \equiv \K_0(N)$. 

Let $\pS_N$ denote the restriction of $\pS$ to $\Sigma_N\to N$. 
We define $\Sigma_{N+} := \Ran P_+(\pS_N)$ to be the image of the positive spectral projection of $\pS_N$, representing a $\K$-theory class $[\Sigma_{N+}] \in \K_0(C(N)) \equiv \K^0(N)$. 

Since $\pS$ commutes with the Clifford multiplication, $\Gamma_N$ is still a $\Z_2$-grading on $\Sigma_{N+}$ and yields the decomposition $\Sigma_{N+} = \Sigma_{N+}^+ \oplus \Sigma_{N+}^-$. 
We will consider the Fredholm operator 
\[
(\D_N)_+^+ := \D_N|_{\Sigma_{N+}^+} \colon \Sigma_{N+}^+ \to \Sigma_{N+}^- .
\]
\begin{thm}[{\cite[Theorem 1.5]{Ang93a}}]
\label{thm:classical_Callias}
Under the assumptions given above, we have the equalities
\[
\Index\big(\D-i\pS\big) = \Index \big(\D_N\big)_+^+ = [\Sigma_{N+}] \otimes_{C(N)} [\D_N] = \int_N \hat A(N) \wedge \ch(\Sigma_{N+}) . 
\]
\end{thm}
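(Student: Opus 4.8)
The plan is to deduce this as a special case of our generalised Callias theorem (\cref{coro:classical_Callias}), combined with the classical Atiyah--Singer index theorem. First I would check that the classical data $(M,\D,\pS)$ fall within the scope of assumptions \ref{ass:A}--\ref{ass:B}. Taking $A=\C$ and letting $E$ be the (finite-rank) twisting bundle on which $\pS$ acts---which makes sense since $\pS$ commutes with Clifford multiplication, so that $\Sigma$ is locally of the form $S\otimes E$---assumption \ref{ass:A1} holds with common domain $W=E$, \ref{ass:A2} and \ref{ass:B1} follow from smoothness of $\pS$, \ref{ass:A3} is precisely the hypothesis that $\pS$ is uniformly invertible outside $K$, and \ref{ass:B2} holds because $[\D,\pS](\pS\pm i)^{-1}$ is bounded (using that $[\D,\pS]$ and $\pS^{-1}$ are uniformly bounded near infinity). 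Then \cref{thm:Fredholm} shows that $\D-i\pS$ is Fredholm, possibly after the harmless rescaling $\pS\mapsto\lambda\pS$, which does not affect $P_+(\pS_N)$.

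Next I would use \cref{thm:Kasp_prod_index} to write $\Index(\D-i\pS)=[\pS]\otimes_{C_0(M)}[\D]$, and then localise this pairing near $N$. Since $\pS$ is uniformly invertible on $M\setminus K$, \cref{prop:cyl_ends,prop:cyl_ends_Kasp_prod}---which rest on the relative index theorem \cref{thm:rel_index}---allow me to replace $M$, without changing the index, by $K$ together with a cylindrical end $N\times[0,\infty)$ on which $\D$ has the product form $-i\partial_r\otimes\Gamma_N+1\otimes\D_N$ and $\pS$ is the constant family $\pS_N$. On this model geometry I would run Kucerovsky's $\KK$-theoretic argument: using associativity of the Kasparov product, the ``one-dimensional'' factor coming from $-i\partial_r$ and the constant invertible potential $\pS_N$ produces the boundary class $[\Sigma_{N+}]=[\Ran P_+(\pS_N)]\in\KK^0(\C,C(N))$ (in the general version of this step a reference potential $\pT$ appears, but in the finite-rank situation one may take $\pT$ negative-definite so that it contributes nothing), and the remaining Kasparov product with $[\D_N]$ over $C(N)$ yields $[\Sigma_{N+}]\otimes_{C(N)}[\D_N]$; this is exactly the assertion of \cref{coro:classical_Callias}.

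It remains to identify the numerical value of this pairing. Unwinding the definition of the Kasparov product shows that $[\Sigma_{N+}]\otimes_{C(N)}[\D_N]$ is the index of the compression of the grading-odd operator $\D_N$ to $\Sigma_{N+}=\Sigma_{N+}^+\oplus\Sigma_{N+}^-$, which is Fredholm because $N$ is compact; this compression is precisely $(\D_N)_+^+\colon\Sigma_{N+}^+\to\Sigma_{N+}^-$. Finally, since $N$ is a closed even-dimensional manifold carrying the Clifford structure induced from $M$ and $(\D_N)_+$ is the associated Dirac-type operator twisted by $\Sigma_{N+}$, the Atiyah--Singer index theorem gives $\Index(\D_N)_+^+=\int_N\hat{A}(N)\wedge\ch(\Sigma_{N+})$.

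I expect the main obstacle to be the localisation-and-cylinder step. The operator $\partial_r+\pS_N$ on the full line is not Fredholm, so the factorisation of the Kasparov product over the half-line has to be carried out with Atiyah--Patodi--Singer-type boundary conditions (or, in the Hilbert-module picture, via the reduced $K$-theory of $C_0(\R)$), keeping careful track of every $\Z_2$-grading. One must also verify that the deformations of $M$, $\D$ and $\pS$ needed to reach product form near $N$ preserve assumptions \ref{ass:A}--\ref{ass:B} and, crucially, the invertibility of $\pS$ on the cylindrical region---this is exactly where the uniform invertibility of $\pS$ outside $K$ and the uniform bound on $[\D,\pS]$ are used. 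A further, minor, point is that the twisting bundle $E$ may be topologically nontrivial; one either carries a nontrivial finitely generated projective module through the argument, as in \cref{coro:classical_Callias}, or simply observes that $P_+(\pS_N)$ is in any case a genuine subbundle of $\Sigma_N$, so that $(\D_N)_+^+$ and its index are well defined without globally splitting $\Sigma$.
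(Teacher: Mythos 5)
The paper does not prove this theorem; it states it as background, citing Anghel's proof directly, and only recovers it later in \cref{coro:classical_Callias} as a consequence of the generalised Callias Theorem. Your proposal is essentially a forward reference to that development, and as a \emph{route to re-derive} the classical result it is the same one the paper takes. The technical content of your sketch (fitting the finite-rank data into assumptions \ref{ass:A}--\ref{ass:B}, invoking \cref{thm:Fredholm} and \cref{thm:Kasp_prod_index}, localising to the cylinder via \cref{thm:rel_index}, running Kucerovsky's argument, and finishing with Atiyah--Singer) matches \cref{thm:general_Callias}, \cref{coro:classical_Callias}, and the remark following \cref{thm:classical_Callias} about the identification $\Index(\D_N)_+^+ = [\Sigma_{N+}]\otimes_{C(N)}[\D_N] = \int_N\hat A(N)\wedge\ch(\Sigma_{N+})$.

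There is, however, a gap you only half-acknowledge. The paper's framework fixes a single Hilbert $A$-module $E$ and a family $\{\pS(x)\}_{x\in M}$ acting on that fixed $E$. In the classical setting, $\pS$ commuting with Clifford multiplication gives a splitting $\Sigma\simeq S\otimes E$ only \emph{locally}; when the twisting bundle is not globally trivial there is no single Hilbert space $E$ on which the whole family acts, and assumptions \ref{ass:A}--\ref{ass:B} simply do not apply. The paper explicitly notes that \cref{coro:classical_Callias} recovers the classical theorem ``only for globally trivial bundles''. Your closing suggestion of ``carrying a nontrivial finitely generated projective module through the argument, as in \cref{coro:classical_Callias}'' does not resolve this: that corollary concerns a projective module over a unital $C^*$-algebra $A$, still fixed once and for all, and does not encode a nontrivial bundle over $M$. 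So your proposal proves the theorem only under the extra hypothesis of a global product structure $\Sigma\simeq S\otimes E$, which Anghel does not assume; that is precisely why the paper cites rather than reproves it. A secondary, fixable omission: Anghel does not assume the product form of \ref{ass:C1} near $N$, so to invoke the generalised theorem you must first deform the metric and $\D$ to product form near $N=\partial K$ and justify that this leaves the index unchanged — a standard step, but one that should be stated.
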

We note that, while Anghel's theorem and proof focused on the first equality, the index of $\big(\D_N\big)_+^+$ realises the index pairing (Kasparov product) of the $\K$-theory class $[\Sigma_{N+}]$ with the $\K$-homology class $[\D_N]$, and it can be computed as $\int_N \hat A(N) \wedge \ch(\Sigma_{N+})$ by the Atiyah--Singer Index Theorem \cite{AS63}.

\subsection{Spectral flow}

Next we will describe the spectral flow equality \eqref{eq:sf_rel-ind} from the Introduction in detail (see \cref{prop:sf_rel_cpt_family}). 
First, we provide the relevant definitions in the context of Hilbert $C^*$-modules. 

An adjointable operator $F \in \mL_A(E)$ is called \emph{Fredholm} if there exists a \emph{parametrix} $G \in \mL_A(E)$ such that $GF - 1$ and $FG - 1$ are compact operators on $E$. 
If $F$ is Fredholm, we denote by $\Index(F) \in \K_0(A)$ the $\K_0(A)$-valued index of $F$; for the definition of this index, we refer to \cite[\S2.2]{vdD19_Index_DS} and references therein. 

\subsubsection{The relative index of projections}
\label{sec:rel-ind}

Consider two projections $P,Q \in \mL_A(E)$. 
If the difference $P-Q$ is a \emph{compact} operator on $E$, then the operator $Q \colon \Ran(P) \to \Ran(Q)$ is a Fredholm operator with parametrix $P \colon \Ran(Q) \to \Ran(P)$. 
\begin{defn}
\label{defn:rel-ind}
For projections $P,Q \in \mL_A(E)$ with $P-Q \in \mK_A(E)$, we define the \emph{relative index of $(P,Q)$} by 
\[
\relind(P,Q) := \Index \big( Q \colon \Ran(P) \to \Ran(Q) \big) \in \K_0(A) .
\]
\end{defn}

For future reference we record two important properties of the relative index:
\begin{lem}[{\cite[\S3.2]{Wah07}}]
\label{lem:rel-ind_properties}
\begin{itemize}
\item (Additivity.) 
If $P,Q,R \in \mL_A(E)$ are projections with $P-Q$ and $Q-R$ compact, then 
\[
\relind(P,R) = \relind(P,Q) + \relind(Q,R) .
\]
\item (Homotopy invariance.) 
If $\{P_t\}_{t\in[0,1]}$ and $\{Q_t\}_{t\in[0,1]}$ are strongly continuous paths of projections such that $P_t-Q_t$ is compact for each $t\in[0,1]$, then 
\[
\relind(P_0,Q_0) = \relind(P_1,Q_1) .
\]
\end{itemize}
\end{lem}

Using the homotopy invariance of the relative index, we obtain the following: 
\begin{coro}
\label{coro:rel-ind_s-cts_proj}
Let $\{P_t\}_{t\in[0,1]}$ be a strongly continuous family of projections on $E$, such that $P_t-P_0$ is compact for each $t\in[0,1]$. 
Then 
\(
\relind(P_0,P_1) = 0 .
\)
\end{coro}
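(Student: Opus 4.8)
The plan is to deduce this corollary directly from the homotopy invariance of the relative index, applied to a cleverly chosen pair of paths. The key observation is that $\relind(P_0,P_1)$ is the relative index of a pair in which only one projection is moving, and we want to compare this with the (trivially zero) relative index $\relind(P_0,P_0)$.

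First I would set up two strongly continuous families on the index interval $[0,1]$: let $\{P_t\}_{t\in[0,1]}$ be the given family, and let $\{Q_t\}_{t\in[0,1]}$ be the constant family $Q_t := P_0$ for all $t$. Both are strongly continuous (the second trivially so), and by hypothesis $P_t - Q_t = P_t - P_0$ is compact for every $t\in[0,1]$, so the relative index $\relind(P_t,Q_t)$ is well-defined for each $t$ and the hypotheses of the homotopy invariance statement in \cref{lem:rel-ind_properties} are met. Applying that statement with endpoints $t=0$ and $t=1$ gives
\[
\relind(P_0,Q_0) = \relind(P_1,Q_1), \qquad\text{i.e.}\qquad \relind(P_0,P_0) = \relind(P_1,P_0).
\]
Since $P_0 - P_0 = 0$ is compact and $P_0 \colon \Ran(P_0)\to\Ran(P_0)$ is the identity, which is invertible of index $0$, we have $\relind(P_0,P_0)=0$, hence $\relind(P_1,P_0)=0$. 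It remains only to note that $\relind(P_0,P_1) = -\relind(P_1,P_0)$ — this follows from additivity in \cref{lem:rel-ind_properties}, since $0 = \relind(P_0,P_0) = \relind(P_0,P_1) + \relind(P_1,P_0)$ — so $\relind(P_0,P_1)=0$ as claimed.

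There is essentially no obstacle here: the statement is a formal consequence of the two recorded properties of the relative index, and the only mild subtlety is making sure the compactness hypothesis needed to even \emph{form} $\relind(P_t,Q_t)$ holds uniformly along the homotopy, which is exactly what the assumption "$P_t - P_0$ is compact for each $t$" provides. One could alternatively phrase the argument purely through additivity without invoking homotopy invariance at all, but the homotopy version is the cleanest and most directly matches the framing of \cref{lem:rel-ind_properties}.
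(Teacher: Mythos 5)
Your proof is correct and matches the paper's intended argument (the paper derives this corollary directly from the homotopy invariance in \cref{lem:rel-ind_properties}, without spelling out the details). A small streamlining: if you take the \emph{constant} family $P_0$ as the first path and $\{P_t\}$ as the second, homotopy invariance immediately yields $\relind(P_0,P_1)=\relind(P_0,P_0)=0$, avoiding the final additivity step used to flip $\relind(P_1,P_0)$.
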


\subsubsection{The spectral flow}

The notion of spectral flow for a path of self-adjoint operators (typically pa\-ra\-metrised by the unit interval) was first defined by Atiyah and Lusztig, and it appeared in the work of Atiyah, Patodi, and Singer \cite[\S7]{APS76}. 
Heuristically, the spectral flow of a path of self-adjoint Fredholm operators counts the net number of eigenvalues which pass through zero. 
An analytic definition of the spectral flow of a path of self-adjoint Fredholm operators on a Hilbert space was given by Phillips in \cite{Phi96}. 
An axiomatic study of the spectral flow was given by Lesch in \cite{Les05}. 

For regular self-adjoint Fredholm operators on a Hilbert $C^*$-module, a general definition of spectral flow was given by Wahl in \cite[\S3]{Wah07}, which we will largely follow here. 
However, we slightly adapt this definition by allowing the `trivialising operators' (which appear in the definition of the spectral flow) to be possibly unbounded (rather than bounded, as in \cite[\S3]{Wah07}). 
This is made possible by \cref{prop:cpt_diff_cts_funct,coro:cpt_diff_spec-projs} (generalising \cite[Proposition 3.7]{Wah07}). 
For the definition and properties of relatively compact operators, we refer the reader to \S\ref{app:rel-cpt} in the Appendix. 

\begin{defn}[{cf.\ \cite[Definition 3.4]{Wah07}}]
Let $\D$ be a regular self-adjoint operator on $E$. 
A \emph{trivialising operator for $\D$} is a (densely defined) symmetric operator $\B$ on $E$ such that $\B$ is relatively $\D$-compact and $\D+\B$ is invertible. 
\end{defn}

Now let $\D$ be a regular self-adjoint Fredholm operator on $E$, and let $\B_0$ and $\B_1$ be two trivialising operators for $\D$. 
By \cref{coro:cpt_diff_spec-projs}, which generalises \cite[Proposition 3.7]{Wah07}, the difference of spectral projections $P_+(\D+\B_1) - P_+(\D+B_0)$ is compact. Hence we can define
\begin{equation}
\label{eq:ind}
\ind(\D,\B_0,\B_1) := \relind\big( P_+(\D+\B_1) , P_+(\D+B_0) \big) .
\end{equation}

\begin{defn}[{cf.\ \cite[Definition 3.9]{Wah07}}]
Let $X$ be a compact Hausdorff space, and consider a regular operator $\D(\cdot) = \{\D(x)\}_{x\in X}$ on the Hilbert $C(X,A)$-module $C(X,E)$. 
A \emph{trivialising family for $\{\D(x)\}_{x\in X}$} is a family $\{\B(x)\}_{x\in X}$ of operators on $E$ such that $\B(\cdot)$ is a trivialising operator for $\D(\cdot)$. 

We say \emph{there exist locally trivialising families} for $\D(\cdot)$ if for each $x\in X$ there exist a compact neighbourhood $O_x$ of $x$ and a trivialising family for $\{\D(y)\}_{y\in O_x}$.
\end{defn}
We note that the existence of locally trivialising families for $\{\D(x)\}_{x\in X}$ then implies that $\D(\cdot)$ is Fredholm (using compactness of $X$). 

\begin{defn}[{cf.\ \cite[Definition 3.10]{Wah07}}]
\label{defn:spectral_flow}
Let $\D(\cdot) = \{\D(t)\}_{t\in[0,1]}$ be a regular self-adjoint operator on the Hilbert $C([0,1],A)$-module $C([0,1],E)$, for which locally trivialising families exist. 
Let $0 = t_0 < t_1 < \ldots < t_n = 1$ be such that there is a trivialising family $\{\B^i(t)\}_{t\in[t_i,t_{i+1}]}$ of $\{\D(t)\}_{t\in[t_i,t_{i+1}]}$ for each $i=0,\ldots,n-1$. 
Let $\A_0$ and $\A_1$ be trivialising operators of $\D(0)$ and $\D(1)$. 
Then we define 
\begin{align*}
\SF\big(\{\D(t)\}_{t\in[0,1]} ; \A_0,\A_1 \big) 
&:= \ind\big(\D(0),\A_0,\B^0(0)\big) + \sum_{i=1}^{n-1} \ind\big(\D(t_i),\B^{i-1}(t_i),\B^i(t_i)\big) \\
&\qquad+ \ind\big(\D(1),\B^{n-1}(1),\A_1\big) 
\in \K_0(A) ,
\end{align*}
where $\ind$ is defined in \cref{eq:ind}. 
If we assume furthermore that the endpoints $\D(0)$ and $\D(1)$ are invertible, then the \emph{spectral flow of $\{\D(t)\}_{t\in[0,1]}$} is defined by
\begin{multline*}
\SF\big(\{\D(t)\}_{t\in[0,1]} \big) 
:= \SF\big(\{\D(t)\}_{t\in[0,1]} ; 0,0 \big) \\
= \ind\big(\D(0),0,\B^0(0)\big) + \sum_{i=1}^{n-1} \ind\big(\D(t_i),\B^{i-1}(t_i),\B^i(t_i)\big) + \ind\big(\D(1),\B^{n-1}(1),0\big) .
\end{multline*}
\end{defn}

As in \cite{Wah07}, the definition of the spectral flow is independent of the choice of subdivision and the choice of trivialising families $\{\B^i(t)\}_{t\in[t_i,t_{i+1}]}$. 
In particular, using \cite[Lemma 3.5]{Wah07}, we may choose the trivialising families to be \emph{bounded}, and thus we recover the definition of the spectral flow given in \cite[Definition 3.10]{Wah07}.

\subsubsection{Spectral flow for relatively compact perturbations}
\label{sec:sf_rel-cpt}

For our attempt to generalise the Callias Theorem to the case of \emph{infinite-rank} bundles, we take some inspiration from the study of spectral flow. In particular, the following result motivates the idea that a Callias-type theorem should hold for generalised Dirac--Schrödinger operators whenever the `potential' is given by a family of relatively compact perturbations. 
\begin{prop}[{cf.\ \cite[Example in \S3.4]{Wah07}}]
\label{prop:sf_rel_cpt_family}
Let $\mT(\cdot) = \{\mT(t)\}_{t\in[0,1]}$ be a regular self-adjoint operator on the Hilbert $C([0,1],A)$-module $C([0,1],E)$, such that
\begin{itemize}
\item 
the endpoints $\mT(0)$ and $\mT(1)$ are invertible; 
\item 
$\mT(t) \colon \Dom\mT(0) \to E$ depends norm-continuously on $t$; and
\item 
$\mT(t)-\mT(0)$ is relatively $\mT(0)$-compact for each $t\in[0,1]$. 
\end{itemize}
Then the following statements hold:
\begin{enumerate}
\item 
There exists a trivialising family for $\{\mT(t)\}_{t\in[0,1]}$. 
\item 
We have the equality 
\begin{align}
\label{eq:sf_glob}
\SF\big(\{\mT(t)\}_{t\in[0,1]} \big) 
&= \relind\big( P_+(\mT(1)) , P_+(\mT(0)) \big) .
\end{align}
\end{enumerate}
\end{prop}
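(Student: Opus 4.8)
The plan is to prove the two statements in sequence, with the first being essentially a direct application of the operator-theoretic machinery developed earlier, and the second following from the definition of spectral flow together with the additivity of the relative index.

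\textbf{Step 1: Existence of a trivialising family.} First I would produce a single candidate trivialising operator and check it works uniformly in $t$. Since $\mT(0)$ is invertible and $\mT(t) - \mT(0)$ is relatively $\mT(0)$-compact, I would set $\B(t) := -\mT(t) + \mT(0) + \big(\mT(t)-\mT(0)\big)$; more to the point, the natural choice is to take the constant family $\B(t) := \mT(0) - \mT(t) + B$ where... actually the cleanest route is: note that $\mT(0)$ itself, viewed as the constant family, differs from $\mT(t)$ by $\mT(0) - \mT(t)$, which is relatively $\mT(0)$-compact (hence relatively $\mT(\cdot)$-compact over $C([0,1],A)$ once we check continuity). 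So I would try the trivialising family $\B(\cdot)$ defined by $\B(t) := \mT(0) - \mT(t)$, which is symmetric, relatively $\mT(\cdot)$-compact (using norm-continuity of $t \mapsto \mT(t)(\mT(0) \pm i)^{-1}$ together with \cref{prop:rel_cpt_rel_bdd_0} and the characterisation of relative compactness over the module $C([0,1],E)$), and satisfies $\mT(\cdot) + \B(\cdot) = \mT(0)$ (the constant invertible family), hence is invertible. Thus $\B(\cdot)$ is a trivialising family for $\{\mT(t)\}_{t\in[0,1]}$, which in particular means locally trivialising families exist and $\mT(\cdot)$ is Fredholm.

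\textbf{Step 2: Computing the spectral flow.} With a global trivialising family $\B(\cdot)$ in hand, I would use \cref{defn:spectral_flow} with the trivial subdivision $0 = t_0 < t_1 = 1$ and a single trivialising family $\B^0(\cdot) = \B(\cdot)$ on $[0,1]$. Since the endpoints are invertible, I take $A_0 = A_1 = 0$ (the zero operator is a trivialising operator for the invertible operators $\mT(0)$ and $\mT(1)$). Then
\begin{align*}
\SF\big(\{\mT(t)\}_{t\in[0,1]}\big)
&= \ind\big(\mT(0), 0, \B(0)\big) + \ind\big(\mT(1), \B(1), 0\big) \\
&= \relind\big(P_+(\mT(0) + \B(0)), P_+(\mT(0))\big) + \relind\big(P_+(\mT(1)), P_+(\mT(1) + \B(1))\big).
\end{align*}
Now $\mT(0) + \B(0) = \mT(0)$, so the first term is $\relind(P_+(\mT(0)), P_+(\mT(0))) = 0$. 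And $\mT(1) + \B(1) = \mT(0)$, so the second term is $\relind(P_+(\mT(1)), P_+(\mT(0)))$. This gives \eqref{eq:sf_glob} directly.

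\textbf{Main obstacle.} I expect the genuine work to be in Step 1, specifically verifying that $\B(\cdot) = \mT(0) - \mT(\cdot)$ is relatively $\mT(\cdot)$-compact \emph{as an operator on the Hilbert $C([0,1],A)$-module $C([0,1],E)$} — i.e.\ that the pointwise relative compactness upgrades to relative compactness of the family. This requires knowing that $t \mapsto \big(\mT(0)-\mT(t)\big)(\mT(0) \pm i)^{-1}$ is norm-continuous with values in the compact operators $\End^0_A(E)$ (so that it defines an element of $\End^0_{C([0,1],A)}(C([0,1],E)) \cong C([0,1], \End^0_A(E))$), and then relating $(\mT(0)\pm i)^{-1}$ to $(\mT(\cdot)\pm i)^{-1}$ via a resolvent identity using relative boundedness (\cref{prop:rel_cpt_rel_bdd_0}). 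The norm-continuity of $t \mapsto (\mT(0)-\mT(t))(\mT(0)\pm i)^{-1}$ follows from the hypothesis that $\mT(t)\colon \Dom\mT(0) \to E$ depends norm-continuously on $t$, i.e.\ $t \mapsto \mT(t)(\mT(0)\pm i)^{-1}$ is norm-continuous; one also needs that for each fixed $t$ this operator is compact, which is exactly relative $\mT(0)$-compactness of $\mT(t)-\mT(0)$. Everything else — symmetry of $\B(\cdot)$, invertibility of $\mT(\cdot)+\B(\cdot)$, the regularity and self-adjointness statements — is either immediate or supplied by \cref{prop:rel_cpt_pert_reg_sa} and \cref{prop:KK-class_rel_cpt_pert}.
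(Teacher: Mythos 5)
Your proposal is correct and takes essentially the same route as the paper: the trivialising family is $\B(t) = \mT(0) - \mT(t)$ (so that $\mT(\cdot)+\B(\cdot)$ is the constant, invertible operator $\mT(0)$), and the spectral flow formula then reduces immediately to the two endpoint terms via \cref{defn:spectral_flow}. The one verification you flag as the "main obstacle" — upgrading pointwise relative compactness to relative $\mT(\cdot)$-compactness of the family — is handled in the paper by the same mechanism you sketch: writing $\B(t)(\mT(t)\pm i)^{-1} = \B(t)(\mT(0)\pm i)^{-1}\cdot(\mT(0)\pm i)(\mT(t)\pm i)^{-1}$, where the first factor is norm-continuous and compact by hypothesis, and the second is norm-continuous because it is the inverse of the norm-continuous family $(\mT(t)\pm i)(\mT(0)\pm i)^{-1}$.
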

\begin{proof}
\begin{enumerate}
\item 
We observe that the family of operators $\B(t) := \mT(0)-\mT(t)$ ($t\in[0,1]$) yields a densely defined symmetric operator $\B(\cdot)$ on $C([0,1],E)$, such that $\mT(\cdot)+\B(\cdot)$ is invertible. 
Moreover, $\B(t) \big(\mT(t)\pm i\big)^{-1}$ is compact for each $t\in[0,1]$ (where we use that $\Dom\mT(t)=\Dom\mT(0)$ by \cref{prop:rel_cpt_pert_reg_sa}). 
Since $\big(\mT(t)\pm i\big) \big(\mT(0)\pm i)^{-1}$ is norm-continuous in $t$, also the family of inverses $\big(\mT(0)\pm i\big) \big(\mT(t)\pm i)^{-1}$ is norm-continuous, and therefore $\B(t) \big(\mT(t)\pm i\big)^{-1}$ is norm-continuous in $t$. 
This shows that $\B(\cdot)$ is relatively $\mT(\cdot)$-compact. 
Thus, $\B(\cdot)$ is a trivialising operator for $\mT(\cdot)$. 

\item 
We can insert the trivialising family $\{\B(t)\}_{t\in[0,1]}$ from the first statement into \cref{defn:spectral_flow} to obtain 
\begin{align*}
&\SF\big(\{\mT(t)\}_{t\in[0,1]} \big) 
= \ind\big( \mT(0),0,\B(0) \big) + \ind\big( \mT(1),\B(1),0 \big) \\
&\quad= \relind\big( P_+(\mT(0)+\B(0)) , P_+(\mT(0)) \big) + \relind\big( P_+(\mT(1)) , P_+(\mT(1)+\B(1)) \big) \\
&\quad= \relind\big( P_+(\mT(1)) , P_+(\mT(0)) \big) ,
\end{align*}
where we used that $\B(0) = 0$ and $\mT(1)+\B(1) = \mT(0)$. 
\qedhere
\end{enumerate}
\end{proof}

We remark that for paths of operators on Hilbert \emph{spaces} (rather than Hilbert modules), the identity \eqref{eq:sf_glob} has been shown to hold even under more general continuity assumptions, see \cite[Theorem 3.6]{Les05} and \cite[Proposition 2.5]{Wah08}.

\section{A generalised Callias-type theorem}
\label{sec:Callias}

\subsection{Generalised Dirac--Schrödinger operators}
\label{subsec:DS}

Throughout this section, we will consider the following setting. 
\begin{assumption*}[(A)]
\customlabel{ass:A}{(A)}
Let $A$ be a $\sigma$-unital $C^*$-algebra, and let $E$ be a countably generated Hilbert $A$-module. 
Let $M$ be a connected Riemannian manifold (typically non-compact), and let $\D$ be an essentially self-adjoint elliptic first-order differential operator on a hermitian vector bundle $\bF\to M$.
Let $\{\pS(x)\}_{x\in M}$ be a family of regular self-adjoint operators on $E$ satisfying the following assumptions: 
\begin{description}
\item[(A1)]
\customlabel{ass:A1}{(A1)}
The domain $W := \Dom\pS(x)$ is independent of $x\in M$, and the inclusion $W\into E$ is compact (where $W$ is viewed as a Hilbert $A$-module equipped with the graph norm of $\pS(x_0)$, for some $x_0\in M$). 
\item[(A2)]
\customlabel{ass:A2}{(A2)}
The map $\pS\colon M\to\mL_A(W,E)$ is norm-continuous. 
\item[(A3)]
\customlabel{ass:A3}{(A3)}
There is a compact subset $K\subset M$ such that $\pS(x)$ is uniformly invertible on $M\setminus K$. 
\end{description}
\end{assumption*}

Given the family of operators $\{\pS(x)\}_{x\in M}$ on $E$, we obtain a closed symmetric operator $\pS(\cdot)$ on $C_0(M,E)$, which is defined as the closure of the operator $\big(\pS(\cdot)\psi\big)(x) := \pS(x) \psi(x)$ on the initial dense domain $C_c(M,W)$. 
By \cite[Proposition 3.4]{vdD19_Index_DS}, the operator $\pS(\cdot)$ on the Hilbert $C_0(M,A)$-module $C_0(M,E)$ is regular self-adjoint and Fredholm. 
Consequently, we obtain from \cite[Proposition 2.14]{vdD19_Index_DS} a well-defined $\K$-theory class 
\[
[\pS(\cdot)] \in \KK^1(\C,C_0(M,A)) \simeq \K_1(C_0(M,A)) . 
\]

Furthermore, since $\D$ is an essentially self-adjoint first-order differential operator, and since the ellipticity of $\D$ ensures that $\D$ also has locally compact resolvents \cite[Proposition 10.5.2]{Higson-Roe00}, we know that $(C_0^1(M), L^2(M,\bF), \D)$ is an (odd) spectral triple, which represents a $\K$-homology class 
\[
[\D] \in \KK^1(C_0(M),\C) \simeq \K^1(C_0(M)) \equiv \K_1(M) . 
\]

We consider the balanced tensor product $L^2(M,E\otimes\bF) := C_0(M,E) \otimes_{C_0(M)} L^2(M,\bF)$. 
The operator $\pS(\cdot)\otimes1$ is well-defined on $\Dom\pS(\cdot) \otimes_{C_0(M)} L^2(M,\bF) \subset L^2(M,E\otimes\bF)$, and is denoted simply by $\pS(\cdot)$ as well. By \cite[Proposition 9.10]{Lance95}, $\pS(\cdot)$ is regular self-adjoint on $L^2(M,E\otimes\bF)$. 

The operator $1\otimes\D$ is not well-defined on $L^2(M,E\otimes\bF)$. Instead, using the canonical isomorphism $L^2(M,E\otimes\bF) \simeq E \otimes L^2(M,\bF)$, we consider the operator $1\otimes\D$ on $E \otimes L^2(M,\bF)$ with domain $E\otimes\Dom\D$. 
Alternatively, we can extend the exterior derivative on $C_0^1(M)$ to an operator 
\[
d \colon C_0^1(M,E) \xrightarrow{\simeq} E\otimes C_0^1(M) \xrightarrow{1\otimes d} E\otimes\Gamma_0(T^*M) \xrightarrow{\simeq} \Gamma_0(E\otimes T^*M) .
\]
Denoting by $\sigma$ the principal symbol of $\D$, we can define an operator $1\otimes_d\D$ on the Hilbert space $C_0(M,E) \otimes_{C_0(M)} L^2(M,\bF)$ by setting
$$
(1\otimes_d\D)(\xi\otimes\psi) := \xi\otimes\D\psi + (1\otimes\sigma)(d\xi)\psi .
$$
Under the isomorphism $C_0(M,E) \otimes_{C_0(M)} L^2(M,\bF) \simeq E \otimes L^2(M,\bF)$, the operator $1\otimes\D$ on $E \otimes L^2(M,\bF)$ agrees with $1\otimes_d\D$ on $C_0(M,E) \otimes_{C_0(M)} L^2(M,\bF)$. We will denote this operator on $L^2(M,E\otimes\bF)$ simply as $\D$. The operator $\D$ is regular self-adjoint on $L^2(M,E\otimes\bF)$ (see also \cite[Theorem 5.4]{KL13}). 

\begin{defn}
\label{defn:gen_DS}
Consider $M$, $\D$, and $\pS(\cdot)$ satisfying assumption \ref{ass:A}. 
We define the operator 
\[
\D_\pS := \D - i \pS(\cdot) 
\]
on the initial domain $C_c^1(M,W) \otimes_{C_0^1(M)} \Dom\D$.
Since $\D + i \pS(\cdot) \subset \big( \D - i \pS(\cdot) \big)^*$ is densely defined (on the same domain), $\D-i\pS(\cdot)$ is closable, and (with slight abuse of notation) we denote its closure simply by $\D_\pS$ as well. 

The operator $\D_\pS$ is called a \emph{generalised Dirac--Schrödinger operator} if $\D_\pS$ is regular and Fredholm, and $\D_\pS^* = \D_{-\pS}$. 
In this case, we obtain a well-defined $\K_0(A)$-valued index
\[
\Index \D_\pS \in \K_0(A) .
\] 
\end{defn}
For the definition of this index, we refer to \cite[\S2.2]{vdD19_Index_DS} and references therein. 

We note that, despite our use of the term `Dirac--Schrödinger' operator, we do not assume that the operator $\D$ is of Dirac-type (although a Dirac-type operator is of course the typical example, as described in the Introduction). Furthermore, we note that regularity, the Fredholm property, and the adjoint relation of $\D_\pS$ do not follow automatically from assumption \ref{ass:A}. 

In order to prove the Fredholm property of $\D_\pS$, we consider in addition to assumption \ref{ass:A} also the following assumption:
\begin{assumption*}[(B)]
\customlabel{ass:B}{(B)}
We assume the following conditions are satisfied:
\begin{description}
\item[(B1)]
\customlabel{ass:B1}{(B1)}
the map $\pS \colon M \to \mL_A(W,E)$ is weakly differentiable (i.e., for each $\psi\in W$ and $\eta\in E$, the map $x \mapsto \la\pS(x)\psi|\eta\ra$ is differentiable), and the weak derivative $d\pS(x) \colon W \to E\otimes T_x^*(M)$ is bounded for all $x\in M$. 
\item[(B2)]
\customlabel{ass:B2}{(B2)} 
the operator $\big[\D,\pS(\cdot)\big] \big(\pS(\cdot)\pm i\big)^{-1}$ is well-defined and bounded (in the sense of \cite[Assumption 7.1]{KL12} and \cite[Definition 5.5]{vdD19_Index_DS}): 
there exists a core $\E\subset\Dom\D$ for $\D$ such that for all $\xi\in\E$ and for all $\mu\in(0,\infty)$ we have the inclusions 
\[
\big(\pS(\cdot)\pm i\mu\big)^{-1} \xi \in \Dom\pS(\cdot) \cap \Dom\D 
\quad \text{and} \quad 
\D \big(\pS(\cdot)\pm i\mu\big)^{-1} \xi \in \Dom\pS(\cdot) ,
\]
and the map $\big[\D,\pS(\cdot)\big] \big(\pS(\cdot)\pm i\mu\big)^{-1} \colon \E \to L^2(M,E\otimes\bF)$ extends to a bounded operator for all $\mu\in(0,\infty)$. 
\end{description}
\end{assumption*}

\begin{remark}
\label{remark:B}
\begin{enumerate}
\item 
Assumption \ref{ass:B} requires the potential $\pS(\cdot)$ to be differentiable (in a suitable sense). Alternatively, it is also possible to deal with continuous potentials, as is done in \cite{vdD19_Index_DS}. 
\item 
As described in \cite[Remark 8.4]{KL13}, assumption \ref{ass:B1} already implies assumption \ref{ass:A2}. 
\item \label{item:bdd_comm}
If, in addition to \ref{ass:B1}, we assume that $\D$ has bounded propagation speed, that $\Dom\pS(\cdot) \subset C_0(M,W)$ (i.e., that there exists $C>0$ such that for all $x\in M$ we have $\|\cdot\|_W \leq C \|\cdot\|_{\pS(x)}$), and that the weak derivative $d\pS(\cdot)$ is \emph{uniformly} bounded, then the boundedness of $\big[\D,\pS(\cdot)\big] \big(\pS(\cdot)\pm i\big)^{-1}$ in \ref{ass:B2} already follows. 
Indeed, as in \cite[Lemma 8.5 \& Theorem 8.6]{KL13}, we can then write
\begin{multline*}
\big[\D,\pS(\cdot)\big] \big(\pS(\cdot)\pm i\big)^{-1} 
= \sigma_\D \circ d\pS(\cdot) \circ \big(\pS(\cdot)\pm i\big)^{-1} 
\colon \\
L^2(M,E\otimes\bF) 
\xrightarrow{(\pS(\cdot)\pm i)^{-1}} \Dom\pS(\cdot) \otimes_{C_0(M)} L^2(M,\bF) 
\into L^2(M,W\otimes\bF) \\
\xrightarrow{d\pS(\cdot)} L^2(M,E\otimes T^*M\otimes\bF) 
\xrightarrow{\sigma_\D} L^2(M,E\otimes\bF) 
\end{multline*}
as a composition of bounded operators. 
\end{enumerate}
\end{remark}

Thanks to assumption \ref{ass:B}, we have: 
\begin{prop}[{\cite[Theorem 7.10]{KL12}}]
\label{prop:reg-sa}
The operators $\D_{\pm\pS}$ are regular on the domain $\Dom\D_\pS$ and satisfy $\D_{\pm\pS}^* = \D_{\mp\pS}$. 
\end{prop}

The following theorem will be proven as the first statement of \cref{thm:Fredholm} below. It states that the operator $\D_\pS$ is Fredholm, provided that (if necessary) the potential $\pS(\cdot)$ is rescaled by a sufficiently large $\lambda>0$. 
\begin{thm}
\label{thm:Fred}
There exists $\lambda_0>0$ such that for any $\lambda\geq\lambda_0$ the operator $\D_{\lambda\pS}$ is Fredholm and thus a generalised Dirac--Schrödinger operator. 
\end{thm}

Our next theorem then describes the Fredholm index of a Dirac--Schrödinger operator in terms of the \emph{index pairing} between the $\K$-theory class of the potential $\pS(\cdot)$ and the $\K$-homology class of the elliptic operator $\D$. Results of this form were previously given by Bunke \cite{Bun95} (see also \cite{Kuc01}) in the classical case, and in \cite{KL13,vdD19_Index_DS} for `generalised' Dirac--Schrödinger operators. 
\begin{restatable}{thm}{thmKaspprodindex}
\label{thm:Kasp_prod_index}
Let $M$ be a connected Riemannian manifold, and let $\{\pS(x)\}_{x\in M}$ and $\D$ satisfy assumptions \ref{ass:A} and \ref{ass:B}. 
Then there exists $\lambda_0>0$ such that for any $\lambda\geq\lambda_0$ 
the $\K_0(A)$-valued index of $\D_{\lambda\pS}$ equals the pairing of $[\pS(\cdot)] \in \K_1(C_0(M,A))$ with $[\D] \in \K^1(C_0(M))$. 
\end{restatable}
The proof is given in \S\ref{sec:Kasp_prod}. It relies on identifying the classes as elements in Kasparov's $\KK$-theory via the isomorphisms $\K_1(C_0(M,A)) \simeq \KK^1(\C,C_0(M,A))$, $\K^1(C_0(M)) \simeq \KK^1(C_0(M),\C)$, and $\K_0(A) \simeq \KK^0(\C,A)$, and then computing the index pairing using the description of the unbounded Kasparov product given in \cite{KL13}.

\subsection{Generalised Callias-type operators}
\label{subsec:Callias}

Let $M$, $\D$ and $\pS(\cdot)$ satisfy assumptions \ref{ass:A} and \ref{ass:B} such that $\D_{\lambda\pS}$ is Fredholm (and hence a generalised Dirac--Schrödinger operator) for $\lambda \geq \lambda_0 > 0$. 
In the remainder of this section, we furthermore assume: 

\begin{assumption*}[(C)]
\customlabel{ass:C}{(C)}
Without loss of generality, assume that the compact subset $K$ from assumption \ref{ass:A3} has a smooth compact boundary $N$. 
We assume furthermore that the following conditions are satisfied:
\begin{description}
\item[(C1)]
\customlabel{ass:C1}{(C1)}
The operator $\D$ is of `product form' near $N$ in the following sense. 
There exists a collar neighbourhood $C \simeq (-2\varepsilon,2\varepsilon) \times N$ of $N$ (with $(-2\varepsilon,0) \times N$ in the interior of $K$), where we can identify $\bF|_C$ with the pullback of $\bF_N := \bF|_N \to N$ to $C \simeq (-2\varepsilon,2\varepsilon) \times N$, so that $\Gamma^\infty(\bF|_C) \simeq C^\infty\big( (-2\varepsilon,2\varepsilon) \big) \otimes \Gamma^\infty(\bF_N)$. 
On this collar neighbourhood 
we have $\D|_C \simeq -i\partial_r \otimes \Gamma_N + 1 \otimes \D_N$, where $\D_N$ is an essentially self-adjoint elliptic first-order differential operator on $\bF_N\to N$, and where $\Gamma_N \in \Gamma^\infty(\End\bF_N)$ is a self-adjoint unitary satisfying $\Gamma_N \D_N = - \D_N \Gamma_N$. 
\item[(C2)]
\customlabel{ass:C2}{(C2)}
For any $x,y\in K$, $\pS(x)-\pS(y)$ is relatively $\pS(x)$-compact. 
\end{description}
Moreover, we fix an (arbitrary) invertible regular self-adjoint operator $\pT$ on $E$ with domain $\Dom\pT = W$, such that $\pS(x)-\pT$ is relatively $\pT$-compact for some (and hence, by \ref{ass:C2}, for every) $x\in K$. 
\end{assumption*}

\begin{remark}
\begin{enumerate}
\item 
For the definition and properties of relatively compact operators, we refer the reader to \S\ref{app:rel-cpt} in the Appendix. 
\item 
The product form of $\D$ in assumption \ref{ass:C1} is typical of Dirac operators corresponding to a product metric on the collar neighbourhood $C$ of $N$, where $\Gamma_N$ is given by Clifford multiplication with the unit normal vector $\partial_r$ to $N$ (actually, one might often write $\D_C' = -i (1\otimes\Gamma_N) (\partial_r \otimes 1 + 1 \otimes \D_N)$, but these two product forms are in fact unitarily equivalent). 
However, in this paper, we do not insist that $\D$ is of Dirac-type. 
One can view assumption \ref{ass:C1} as requiring precisely those properties of Dirac operators which we need below (in particular, to prove \cref{lem:D_product}). 

\item 
We remind the reader that assumption \ref{ass:C2} is motivated by the spectral flow result from \cref{prop:sf_rel_cpt_family}. 

\item 
We note that, for the operator $\pT$, we can for instance choose $\pT = \pS(x_0)$ for some $x_0\in K$, but it can be useful to allow for arbitrary relatively compact perturbations. 
\end{enumerate}
\end{remark}

\begin{defn}
If assumptions \ref{ass:A}, \ref{ass:B}, and \ref{ass:C} are satisfied, then the generalised Dirac--Schrödinger operator $\D_{\lambda\pS}$ is called a \emph{generalised Callias-type operator}. 

\noindent 
(We always implicitly assume that $\lambda\geq\lambda_0>0$ such that $\D_{\lambda\pS}$ is Fredholm.)
\end{defn}

We consider the invertible regular self-adjoint operator $\pT(\cdot)$ on $C(N,E)$ corresponding to the constant family $\pT(y) := \pT$ (for $y\in N$). 
The restriction of the potential $\pS(\cdot)$ to the hypersurface $N$ also yields an invertible regular self-adjoint operator $\pS_N(\cdot) = \{\pS(y)\}_{y\in N}$ on $C(N,E)$. 
We recall that $\pS(y) - \pT$ is relatively $\pT$-compact for each $y\in N$. 
Furthermore, $\pS(y) \big( \pT\pm i \big)^{-1}$ depends norm-continuously on $y$ by assumption \ref{ass:A2}. 
Hence $\pS_N(\cdot) - \pT(\cdot)$ is relatively $\pT(\cdot)$-compact. 
We then know from \cref{coro:cpt_diff_spec-projs} that the difference of positive spectral projections $P_+(\pS_N(\cdot)) - P_+(\pT(\cdot))$ is compact, so that the relative index $\relind\big(P_+(\pS_N(\cdot)),P_+(\pT(\cdot))\big)$ is well-defined in $\K_1(C(N,A))$ (see \cref{defn:rel-ind}). 
We are now ready to state our generalisation of the Callias Theorem. 

\begin{restatable}[Generalised Callias Theorem]{thm}{thmGeneralCallias}
\label{thm:general_Callias}%
Let $\D_{\lambda\pS}$ be a generalised Callias-type operator. 
Then we have the equality
\begin{align*}
\Index\big( \D_{\lambda\pS} \big)
= \relind\big(P_+(\pS_N(\cdot)),P_+(\pT(\cdot))\big) \otimes_{C(N)} [\D_N] \in \K_0(A) ,
\end{align*}
where $\otimes_{C(N)}$ denotes the pairing $\K_1(C(N,A)) \times \K^1(C(N)) \to \K_0(A)$. 
\end{restatable}
\begin{remark}
Although the relative index depends explicitly on the choice of $\pT$, the theorem in particular shows that the pairing $\relind\big(P_+(\pS_N(\cdot)),P_+(\pT(\cdot))\big) \otimes_{C(N)} [\D_N]$ on the right-hand-side is in fact independent of this choice. 
This independence of $\pT$ can be understood as a consequence of the cobordism invariance of the index (since $N$ is the boundary of $K$, the index of $\D_N$ vanishes). In fact, one can also turn this around, and prove the cobordism invariance of the index as a consequence of the Callias Theorem (by considering the trivial rank-one bundle $M\times\C$ with the potential $\pS(\cdot)=1$, and the operator $\pT=-1$ on $E=A=\C$). 
\end{remark}

We observe next that our assumption \ref{ass:C2} ensures that the class $[\pS(\cdot)]$ of the potential depends only on the hypersurface $N$. This is the crucial observation which enables one to obtain the index of the Callias-type operator from a computation on the hypersurface $N$, as in \cref{eq:index_pairings} in the Introduction. 
Consider the open subset $U := K\cup C \subset M$ with compact closure $\bar U$ and boundary $\partial U\simeq N$. 
We have the short exact sequence 
\begin{align}
\label{eq:ses_U}
0 &\rightarrow C_0(U,A) \xrightarrow{j} C(\bar U,A) \to C(N,A) \to 0 
\end{align}
and the corresponding cyclic six-term exact sequences in $\K$-theory and $\K$-homology. 

\begin{prop}
\label{prop:potential_boundary}
The $\K$-theory class $[\pS(\cdot)] \in \K_1(C_0(M,A))$ is uniquely determined by an element $\Sigma_N \in \K_0(C(N,A))$. 
More explicitly, we have 
\[
[\pS(\cdot)] = {\iota_U}_* \circ \partial(\Sigma_N) ,
\]
where $\partial \colon \K_0(C(N,A)) \to \K_1(C_0(U,A))$ denotes the exponential map in the cyclic six-term exact sequence in $\K$-theory corresponding to the short exact sequence \eqref{eq:ses_U}, and where ${\iota_U}_* \colon \K_1(C_0(U,A)) \to \K_1(C_0(M,A))$ is induced by the inclusion $\iota_U \colon C_0(U,A) \into C_0(M,A)$. 
\end{prop}
\begin{proof}
The invertibility of the potential $\pS(\cdot)$ outside of the compact subset $K$ ensures that the class $[\pS(\cdot)]$ depends only on the restriction of $\pS(\cdot)$ to $U$. 
Indeed, we have from \cite[Lemma 3.8]{vdD19_Index_DS} the equality $[\pS(\cdot)] = {\iota_U}_*\big([\pS(\cdot)|_U]\big)$. 

We may assume, without loss of generality, that assumption \ref{ass:C2} holds for all $x\in\bar U$ (see \cref{lem:collar_potential} below for an explicit computation). 
Using compactness of $\bar U$, it follows that the operator $\pS(\cdot)|_{\bar U}$ is a relatively compact perturbation of the invertible operator $\pT(\cdot)_{\bar U} = \{\pT\}_{x\in \bar U}$, and therefore $j_*\big(\big[\pS(\cdot)|_{U}\big]\big) = 0 \in \K_1(C(\bar U,A))$. 
From the cyclic six-term exact sequence in $\K$-theory we conclude that $\big[\pS(\cdot)|_{U}\big]$ lies in the image of the exponential map $\partial \colon \K_0(C(N,A)) \to \K_1(C_0(U,A))$, 
so there exists a class $\Sigma_N \in \K_0(C(N,A))$ such that $\partial(\Sigma_N) = [\pS(\cdot)|_U]$. 
\end{proof}

The above proposition ensures that we can apply \cref{eq:index_pairings}, and combined with \cref{thm:Kasp_prod_index} we obtain the equality 
\[
\Index\big( \D_{\lambda\pS} \big)
= \Sigma_N \otimes_{C(N)} [\D_N] .
\]
Thus, in order to prove \cref{thm:general_Callias}, it remains to explicitly identify the $\K$-theory class $\Sigma_N \in \K_0(C(N,A))$ as the relative index of the positive spectral projections $P_+(\pS_N(\cdot))$ and $P_+(\pT(\cdot))$. 
We will obtain this identification in \cref{sec:proof} by first reducing the general statement to the special case of a cylindrical manifold $\R\times N$ (see \cref{thm:red_cyl}). 
The main advantage of considering the cylindrical manifold is, roughly speaking, that we can then invert the boundary map in order to explicitly compute a solution $\Sigma_N$ of the equation $[\pS(\cdot)|_U] = \partial(\Sigma_N)$.

\subsection{Special cases}
\label{subsec:special_cases}

In this subsection we reconsider the two well-known special cases of our generalised Callias Theorem, described in \cref{sec:background}. 

First, in the special case when $E$ is a finite-dimensional Hilbert space, we recover the classical Callias \cref{thm:classical_Callias} (though only for globally trivial bundles). In fact, we find that the statement of the classical Callias Theorem continues to hold if $E$ is a finitely generated projective module over a \emph{unital} $C^*$-algebra $A$.

\begin{coro}
\label{coro:classical_Callias}
Let $\D_{\lambda\pS}$ be a generalised Callias-type operator. 
Suppose furthermore that $A$ is unital, and that $E$ is finitely generated and projective over $A$. 
Then 
\[
\Index\big( \D_{\lambda\pS} \big)
= \big[ \Ran P_+(\pS_N(\cdot)) \big] \otimes_{C(N)} [\D_N] \in \K_0(A) .
\]
\end{coro}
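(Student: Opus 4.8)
The plan is to deduce the corollary from the Generalised Callias Theorem (\cref{thm:general_Callias}) by rewriting the relative index there as a difference of honest $\K$-theory classes and then showing that the contribution of the auxiliary operator $\pT$ drops out after pairing with $[\D_N]$.

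First I would note that, since $A$ is unital and $E$ is finitely generated projective over $A$, the Hilbert $C(N,A)$-module $C(N,E)$ is again finitely generated projective; hence every adjointable endomorphism of $C(N,E)$ is compact. In particular the projections $P_+(\pS_N(\cdot))$ and $P_+(\pT(\cdot))$ — which are adjointable because $\pS_N(\cdot)$ and $\pT(\cdot)$ are invertible — lie in $\End^0_{C(N,A)}(C(N,E))$, so that $\Ran P_+(\pS_N(\cdot))$ and $\Ran P_+(\pT(\cdot))$ are finitely generated projective and define classes in $\K_0(C(N,A))$. Combining the additivity of the relative index (\cref{lem:rel-ind_properties}) with the observation that $\relind(P,0) = [\Ran P]$ for any compact projection $P$ (immediate from the definition of $\relind$, applied with the zero projection), I would obtain
\[
\relind\big(P_+(\pS_N(\cdot)),P_+(\pT(\cdot))\big) = \big[\Ran P_+(\pS_N(\cdot))\big] - \big[\Ran P_+(\pT(\cdot))\big] \in \K_0(C(N,A)) .
\]

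Substituting this into \cref{thm:general_Callias} and using additivity of the Kasparov product in its first argument, the corollary reduces to showing that $\big[\Ran P_+(\pT(\cdot))\big] \otimes_{C(N)} [\D_N] = 0$. Here I would exploit that $\pT(\cdot)$ is the \emph{constant} family $\{\pT\}_{y\in N}$, so that $\Ran P_+(\pT(\cdot)) \simeq C(N)\otimes\Ran P_+(\pT)$ and $\big[\Ran P_+(\pT(\cdot))\big]$ is the image of $\big[\Ran P_+(\pT)\big]\in\K_0(A)$ under the map induced by the unital inclusion $A \into C(N,A)$. By functoriality and associativity of the Kasparov product this gives
\[
\big[\Ran P_+(\pT(\cdot))\big] \otimes_{C(N)} [\D_N] = \Index(\D_N) \cdot \big[\Ran P_+(\pT)\big] \in \K_0(A) ,
\]
since the Kasparov product of the trivial rank-one $C(N)$-module with $[\D_N]\in\KK^0(C(N),\C)$ is the integer $\Index(\D_N)\in\KK^0(\C,\C)\simeq\Z$. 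Finally $\Index(\D_N)=0$: this is the cobordism invariance of the index, valid because $N=\partial K$ and $\D$ has the product form of assumption \ref{ass:C1} near $N$ — and, as noted in the remark above, it can itself be recovered from \cref{thm:general_Callias} applied to $E=A=\C$ with $\pS(\cdot)\equiv1$ and $\pT=-1$. Putting the two displays together then yields $[\til\D_{\lambda\pS}] = \big[\Ran P_+(\pS_N(\cdot))\big] \otimes_{C(N)} [\D_N]$, as desired.

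The steps carrying genuine content are the second and third: matching the abstract relative index of the two (now compact) projections with the difference of their $\K$-theory classes in the Hilbert-module setting, and establishing $\Index(\D_N)=0$. The latter I expect to be the main obstacle — for $\D$ of Dirac type it is the classical cobordism invariance of the index, but in the present generality one must either appeal to assumption \ref{ass:C1}, which is tailored precisely to supply the required product structure near $N$, or bootstrap the vanishing from the Callias Theorem itself. One should also check carefully that the constant family $\pT(\cdot)$ genuinely produces a pulled-back $\K$-theory class, so that the product over $C(N)$ really does collapse to multiplication by the integer $\Index(\D_N)$.
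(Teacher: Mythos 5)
Your argument is correct, but the paper's own proof is considerably shorter and avoids the cobordism step entirely. Since $A$ is unital and $E$ is finitely generated projective, every adjointable operator on $E$ is compact, and in particular the constant operator $\pT := -1$ is a legitimate choice for the auxiliary operator of Assumption \ref{ass:C}: it is invertible, and $\pS(x) - (-1)$ is (relatively) compact for every $x\in K$. With this choice the positive spectral projection $P_+(\pT(\cdot))$ is simply $0$, so \cref{thm:general_Callias} directly produces
\[
[\til\D_{\lambda\pS}] = \relind\big(P_+(\pS_N(\cdot)),0\big)\otimes_{C(N)}[\D_N] = \big[\Ran P_+(\pS_N(\cdot))\big]\otimes_{C(N)}[\D_N],
\]
with no further work.

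Your route instead keeps $\pT$ general, rewrites the relative index as the difference $\big[\Ran P_+(\pS_N(\cdot))\big]-\big[\Ran P_+(\pT(\cdot))\big]$ (this step, including $\relind(P,0)=[\Ran P]$ and the additivity bookkeeping, is fine), identifies $\big[\Ran P_+(\pT(\cdot))\big]$ with an externally pulled-back class, and then needs $\Index(\D_N)=0$ to kill the second term. That vanishing is indeed the cobordism invariance of the index, and your bootstrap from \cref{thm:general_Callias} with $E=A=\C$, $\pS\equiv 1$, $\pT=-1$ is legitimate (it is exactly the observation the paper makes in the remark following \cref{thm:general_Callias}). So the argument closes, but at the cost of one extra nontrivial input. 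What your approach buys is a slightly more symmetric picture — it makes manifest that the right-hand side is independent of the choice of $\pT$ — but for proving the stated corollary the paper's choice $\pT=-1$ is the economical one, and you should at least note that it is available under the f.g.p. hypothesis (precisely because it forces $\pS(x)+1$ to be compact, hence requires all $\pS(x)$ to be bounded, which the f.g.p. assumption guarantees).
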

\begin{proof}
The assumptions on $A$ and $E$ ensure that all operators on $E$ are compact. 
In particular, the operator $\pT:=-1$ is a relatively compact perturbation of each $\pS_N(y)$. 
With $P_+(\pT(\cdot)) = 0$ we therefore obtain
\[
\relind\big(P_+(\pS_N(\cdot)),0\big) = \Index\big( 0 \colon \Ran P_+(\pS_N(\cdot)) \to \{0\} \big) = \big[ \Ran P_+(\pS_N(\cdot)) \big] . 
\]
Thus from \cref{thm:general_Callias} we find that 
\begin{align*}
\Index\big( \D_{\lambda\pS} \big)
&= \relind\big(P_+(\pS_N(\cdot)),0\big) \otimes_{C(N)} [\D_N] 
= \big[ \Ran P_+(\pS_N(\cdot)) \big] \otimes_{C(N)} [\D_N] .
\qedhere
\end{align*}
\end{proof}

Second, in the special case where $M=\R$, we recover the equality between the spectral flow and the relative index of spectral projections of the end-points from \cref{prop:sf_rel_cpt_family}. 
\begin{coro}
\label{coro:Callias_sf}
Consider the operator $\D = -i\partial_t$ on the manifold $M=\R$ and a potential $\pS(\cdot) = \big\{ \pS(t) \big\}_{t\in\R}$ satisfying assumptions \ref{ass:A}, \ref{ass:B}, and \ref{ass:C}. 
Suppose for simplicity that the compact subset from assumption \ref{ass:A3} is given by the unit interval $K = [0,1]$. 
Then we have the equality 
\begin{align*}
\SF\big(\{\pS(t)\}_{t\in[0,1]}\big)
&= \relind\big(P_+(\pS(1)),P_+(\pS(0))\big) . 
\end{align*}
\end{coro}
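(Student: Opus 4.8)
The plan is to derive \cref{coro:Callias_sf} from the Generalised Callias Theorem (\cref{thm:general_Callias}), applied with the compact set $K=[0,1]\subset\R$, whose boundary is the $0$-dimensional manifold $N=\partial K=\{0\}\cup\{1\}$. Then $C(N)\simeq\C\oplus\C$, and the two sides of \cref{thm:general_Callias} become concrete expressions in $\K_0(A)$ that can be computed by hand. Throughout, fix $\lambda$ large enough that $\til\D_{\lambda\pS}$ is a generalised Callias-type operator.

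The first task is to identify the $\K$-homology class $[\D_N]\in\KK^0(C(N),\C)$. Since $N$ is discrete, every first-order differential operator on $\bF_N\to N$ is zero, so the product-form assumption \ref{ass:C1} forces $\D_N=0$ and $\D|_C\simeq-i\partial_r\otimes\Gamma_N$ near $N$. Normalising the collar coordinate $r$ so that $r<0$ points into $K$ (as in \cref{lem:collar_potential}), near $t=1$ we have $r=t-1$, hence $\D=-i\partial_t=-i\partial_r$ and $\Gamma_N=+1$ there; near $t=0$ we have $r=-t$, hence $\D=-i\partial_t=i\partial_r=-i\partial_r\cdot(-1)$ and $\Gamma_N=-1$ there. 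Writing $\mathrm{ev}_0,\mathrm{ev}_1\colon C(N)\to\C$ for the two evaluation $*$-homomorphisms, the even $\K$-homology class of the zero operator on the graded space $\bF_N$ is then $[\D_N]=[\mathrm{ev}_1]-[\mathrm{ev}_0]$; equivalently, the induced pairing sends $(x_0,x_1)\in\K_0(C(N,A))\simeq\K_0(A)\oplus\K_0(A)$ to $x_1-x_0$.

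The second task is to evaluate the right-hand side of \cref{thm:general_Callias}. Because $N$ is discrete, $C(N,E)\simeq E\oplus E$ and all the operators in sight are block-diagonal: $\pS_N(\cdot)=\pS(0)\oplus\pS(1)$ and $\pT(\cdot)=\pT\oplus\pT$, so $P_+(\pS_N(\cdot))-P_+(\pT(\cdot))$ splits accordingly and the relative index decomposes as $\relind\big(P_+(\pS_N(\cdot)),P_+(\pT(\cdot))\big)=\big(\relind(P_+(\pS(0)),P_+(\pT)),\ \relind(P_+(\pS(1)),P_+(\pT))\big)$. Pairing this with $[\D_N]=[\mathrm{ev}_1]-[\mathrm{ev}_0]$ gives $\relind(P_+(\pS(1)),P_+(\pT))-\relind(P_+(\pS(0)),P_+(\pT))$, which collapses to $\relind(P_+(\pS(1)),P_+(\pS(0)))$ by the additivity of the relative index (\cref{lem:rel-ind_properties}) together with the antisymmetry $\relind(P,Q)=-\relind(Q,P)$ that additivity implies.

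The third task is to identify the left-hand side $[\til\D_{\lambda\pS}]$ of \cref{thm:general_Callias} with $\SF(\{\pS(t)\}_{t\in[0,1]})$. By \cref{thm:Kasp_prod_index}, $[\til\D_{\lambda\pS}]=[\pS(\cdot)]\otimes_{C_0(\R)}[-i\partial_t]$; and since $\pS(\cdot)$ is invertible on $\R\setminus[0,1]$ by \ref{ass:A3} — which, with the continuity \ref{ass:A2}, also forces $\pS(0)$ and $\pS(1)$ to be invertible, so that $\SF(\{\pS(t)\}_{t\in[0,1]})$ is well-defined (a trivialising family exists by \cref{prop:sf_rel_cpt_family}) — the same identity \cite[Proposition 2.21]{vdD19_Index_DS} invoked in the proof of \cref{thm:general_Callias} yields $[\pS(\cdot)]\otimes_{C_0(\R)}[-i\partial_t]=\SF(\{\pS(t)\}_{t\in[0,1]})$. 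Combining the three tasks with \cref{thm:general_Callias} gives the claimed equality. I expect the main obstacle to be the bookkeeping in the first two tasks: getting the $\Z_2$-grading $\Gamma_N$, and hence the relative sign $[\mathrm{ev}_1]-[\mathrm{ev}_0]$ between the two endpoints, correct — since it is precisely this sign that, via additivity of $\relind$, turns the pairing into $\relind(P_+(\pS(1)),P_+(\pS(0)))$ rather than a sum. (One can also short-circuit the argument entirely: assumptions \ref{ass:A1}, \ref{ass:A2}, \ref{ass:A3} and \ref{ass:C2} say exactly that the family $\{\pS(t)\}_{t\in[0,1]}$ satisfies the hypotheses of \cref{prop:sf_rel_cpt_family}, which then gives the result directly.)
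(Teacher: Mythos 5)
Your proposal is correct and follows essentially the same route as the paper: apply \cref{thm:general_Callias} with $N=\{0,1\}$, identify the left-hand side with $\SF(\{\pS(t)\}_{t\in[0,1]})$ via \cref{thm:Kasp_prod_index} and \cite[Proposition 2.21]{vdD19_Index_DS}, read off $\Gamma_N=-1$ at $t=0$ and $\Gamma_N=+1$ at $t=1$ from the outward-pointing collar coordinate, and collapse the resulting pairing using additivity (hence antisymmetry) of the relative index. The paper phrases the last step by identifying $[\D_N]$ with $(-1)\oplus 1\in\Z\oplus\Z$ rather than with $[\mathrm{ev}_1]-[\mathrm{ev}_0]$, but this is the same computation; your parenthetical ``short-circuit'' via \cref{prop:sf_rel_cpt_family} is also valid, though it bypasses the point of the corollary, which is to exhibit the spectral-flow identity as a consequence of the generalised Callias Theorem.
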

\begin{proof}
From \cref{thm:general_Callias} we obtain 
\[
\Index\big( \D - i \lambda \pS(\cdot) \big)
= \relind\big(P_+(\pS_N(\cdot)),P_+(\pT(\cdot))\big) \otimes_{C(N)} [\D_N] ,
\]
where the `hypersurface' $N = \{0,1\}$ consists of the endpoints of the unit interval, and $\pT$ is any relatively compact perturbation of $\pS(0)$. 
We will examine both the left-hand-side and the right-hand-side of the above equation. 

First, the left-hand-side is given by 
\[
\Index\big( {-i\partial_t} - i \lambda \pS(\cdot) \big)
= [\pS(\cdot)] \otimes_{C_0(\R)} [-i\partial_t] 
= \SF\big(\{\pS(t)\}_{t\in[0,1]}\big) , 
\]
where the first equality is obtained from \cref{thm:Kasp_prod_index}, and the second from \cite[Proposition 2.21]{vdD19_Index_DS} (using that trivialising families exist by \cref{prop:sf_rel_cpt_family}.(1)). 

For the right-hand-side, we examine the product form of $-i\partial_t$ near $N=\{0,1\}$. 
The operator $\D_N$ is just the zero operator on $\bF_N = \bF_{\{0\}} \oplus \bF_{\{1\}} \simeq \C\oplus\C$. 
We note that the coordinate $r$ increases in the outward direction, so we have $t=-r$ near $0$ and $t=1+r$ near $1$. 
Thus, on a collar neighbourhood of $N$ we can write $-i\partial_t \simeq i\partial_r \oplus (-i\partial_r) = -i\partial_r \otimes \Gamma_N$, where the operator $\Gamma_N$ is given by $(-1) \oplus 1$ on $\bF_{\{0\}} \oplus \bF_{\{1\}}$. 
Thus $\bF_{\{0\}} = \bF^-_{\{0\}} = \C$ and $\bF_{\{1\}} = \bF^+_{\{1\}} = \C$, and we can identify $[\D_N]\in\KK^0(\C^2,\C) \simeq \K^0(\C^2)$ with $(-1)\oplus1 \in \Z\oplus\Z$. 
Then the Kasparov product over $C(N)=\C^2$ can be calculated as follows:
\begin{align*}
\relind&\big(P_+(\pS_N(\cdot)),P_+(\pT(\cdot))\big) \otimes_{\C^2} [\D_N] \\
&= \relind\big(P_+(\pS(0)),P_+(\pT)\big) \otimes (-1) + \relind\big(P_+(\pS(1)),P_+(\pT)\big) \otimes 1 \\
&= \relind\big(P_+(\pT),P_+(\pS(0))\big) + \relind\big(P_+(\pS(1)),P_+(\pT)\big) \\
&= \relind\big(P_+(\pS(1)),P_+(\pS(0))\big) . 
\qedhere
\end{align*}
\end{proof}

\section{Generalised Dirac--Schrödinger operators}
\label{sec:DS}

Consider $M$, $\D$, and $\pS(\cdot)$ satisfying assumption \ref{ass:A}. 
We have defined in \cref{defn:gen_DS} the operator \( \D_\pS := \D - i \pS(\cdot) \) on the initial domain $C_c^1(M,W) \otimes_{C_0^1(M)} \Dom\D$.
We now also define the operators
\begin{gather*}
\til\D := \mattwo{0}{\D}{\D}{0} , \qquad 
\til\pS(\cdot) := \mattwo{0}{+i\pS(\cdot)}{-i\pS(\cdot)}{0} , \\
\til\D_\pS := \til\D + \til\pS(\cdot) = \mattwo{0}{\D+i\pS(\cdot)}{\D-i\pS(\cdot)}{0} ,
\end{gather*}
on the initial domain $\big(C_c^1(M,W) \otimes_{C_0^1(M)} \Dom\D\big)^{\oplus2}$. 
The operator $\til\D_\pS$ is odd with respect to the $\Z_2$-grading $\Gamma := \mattwo{1}{0}{0}{-1}$. 

We recall that $\D_\pS$ is called a \emph{generalised Dirac--Schrödinger operator} if $\til\D_\pS$ is regular, self-adjoint, and Fredholm. 
In this case, the operator $\til\D_\pS$ yields a class 
\[
[\til\D_\pS] \in \KK^0(\C,A) ,
\] 
corresponding to the $\K_0(A)$-valued index of $\D_\pS$ under the isomorphism $\KK^0(\C,A) \simeq \K_0(A)$. 
For the construction of this class in Kasparov's $\KK$-theory and its relation to the Fredholm index, we refer to \cite[\S2.2]{vdD19_Index_DS}.

\subsection{Relative index theorem}
\label{sec:rel_index_thm}

An important tool for our index computations is the relative index theorem \cite[Theorem 4.7]{vdD19_Index_DS}, which is an adaptation of a theorem by Bunke \cite[Theorem 1.14]{Bun95}. 
Here we shall adapt \cite[Theorem 4.7]{vdD19_Index_DS} in order to allow for more general situations (in particular, we avoid the assumption (A4) from \cite[\S3.2]{vdD19_Index_DS}). 

We consider the following setting. 
For $j=1,2$, let $\bF^j\to M^j$, $\D^j$, and $\pS^j(\cdot)$ be as in assumption \ref{ass:A}, 
and assume that the operators $\{\pS^j(x)\}_{x\in M^j}$ act on the same Hilbert $A$-module $E$. 
Suppose we have partitions $M^j = \bar U^j \cup_{N^j} \bar V^j$, where $N^j$ are smooth compact hypersurfaces. 
Let $C^j$ be open tubular neighbourhoods of $N^j$, and assume that there exists an isometry $\phi\colon C^1\to C^2$ (with $\phi(N^1)=N^2$) covered by an isomorphism $\Phi\colon\bF^1|_{C^1} \to \bF^2|_{C^2}$, such that $\D^1|_{C^1} \Phi^* = \Phi^* \D^2|_{C^2}$ and $\pS^2(\phi(x)) = \pS^1(x)$ for all $x\in C^1$. 

We will identify $C^1$ with $C^2$ (as well as $N^1$ with $N^2$) via $\phi$, and we simply write $C$ (and $N$). Define two new Riemannian manifolds 
\begin{align*}
M^3 &:= \bar U^1 \cup_N \bar V^2 , & 
M^4 &:= \bar U^2 \cup_N \bar V^1 .
\end{align*}
Moreover, we glue the bundles using $\Phi$ to obtain hermitian vector bundles $\bF^3\to M^3$ and $\bF^4\to M^4$. For $j=3,4$, we then obtain corresponding operators $\D^j$ and $\pS^j(\cdot)$ satisfying assumption \ref{ass:A}. 

\begin{thm}[Relative index theorem]
\label{thm:rel_index}
Assume that $\til\D_\pS^j$ (for $j=1,2$) are regular self-adjoint Fredholm operators with locally compact resolvents. 
Then $\til\D_\pS^3$ and $\til\D_\pS^4$ are also regular self-adjoint Fredholm operators with locally compact resolvents. 
Moreover, we have in $\K_0(A)$ the equality 
\[
\Index \big( \D^1 - i \pS^1(\cdot) \big) + \Index \big( \D^2 - i \pS^2(\cdot) \big) = \Index \big( \D^3 - i \pS^3(\cdot) \big) + \Index \big( \D^4 - i \pS^4(\cdot) \big) .
\]
\end{thm}
\begin{proof}
First, we need to check that $\til\D_\pS^3$ and $\til\D_\pS^4$ are also regular self-adjoint and Fredholm. We give the proof only for $\til\D_\pS^3$. 
We choose smooth functions $\chi_1$ and $\chi_2$ such that 
\begin{align*}
\supp \chi_1 &\subset U^1 \cup C , & 
\supp \chi_2 &\subset V^2 \cup C , &
\chi_1^2 + \chi_2^2 &= 1 .
\end{align*}
For $\lambda>0$ we define 
\[
R_\pm(\lambda) := \chi_1 \big( \til\D_\pS^1 \pm i\lambda \big)^{-1} \chi_1 + \chi_2 \big( \til\D_\pS^2 \pm i\lambda \big)^{-1} \chi_2 .
\]
Then 
\[
\big( \til\D_\pS^3 \pm i\lambda \big) R_\pm(\lambda) = 1 + [\til\D^1,\chi_1] \big(\til\D_\pS^1\pm i\lambda\big)^{-1} \chi_1 + [\til\D^2,\chi_2] \big(\til\D_\pS^2\pm i\lambda\big)^{-1} \chi_2 =: 1 + \mK_\pm(\lambda) .
\]
We can pick $\lambda$ sufficiently large, such that the norm of $\mK_\pm(\lambda)$ is less than one. Then $1+\mK_\pm(\lambda)$ is invertible, and $R_\pm(\lambda)(1+\mK_\pm(\lambda))^{-1}$ is a right inverse of $\til\D_\pS^3\pm i\lambda$. Similarly, we can also obtain a left inverse, which proves that $\til\D_\pS^3$ is regular self-adjoint. 
Moreover, since $R_\pm(\lambda)$ is locally compact, we see that $\til\D_\pS^3$ has locally compact resolvents. 

Next, given parametrices $Q_1$ and $Q_2$ for $\til\D_\pS^1$ and $\til\D_\pS^2$, respectively, we define
\[
Q_3 := \chi_1 Q_1 \chi_1 + \chi_2 Q_2 \chi_2 .
\]
Then 
\[
\til\D_\pS^3 Q_3 - 1 = \chi_1 \big( \til\D_\pS^1 Q_1 - 1 \big) \chi_1 + [\til\D^1,\chi_1] Q_1 \chi_1 + \chi_2 \big( \til\D_\pS^2 Q_2 - 1 \big) \chi_2 + [\til\D^2,\chi_2] Q_2 \chi_2 .
\]
The terms $\chi_j \big( \til\D_\pS^j Q_j - 1 \big) \chi_j$ are compact because $Q_j$ are parametrices. 
Furthermore, the terms $[\til\D^j,\chi_j] Q_j \chi_j$ are compact because $[\til\D^j,\chi_j]$ are compactly supported and $\til\D_\pS^j$ have locally compact resolvents. 
Hence $Q_3$ is a right parametrix for $\til\D_\pS^3$. A similar calculation shows that $Q_3$ is also a left parametrix, and therefore $\til\D_\pS^3$ is Fredholm. 
Similarly, the operator $\til\D_\pS^4$ is also regular self-adjoint and Fredholm. 
The proof of the desired equality $\Index \big( \D^1 - i \pS^1(\cdot) \big) + \Index \big( \D^2 - i \pS^2(\cdot) \big) = \Index \big( \D^3 - i \pS^3(\cdot) \big) + \Index \big( \D^4 - i \pS^4(\cdot) \big) \in \K_0(A)$
is then exactly as in \cite[Theorem 4.7]{vdD19_Index_DS}.
\end{proof}

\subsection{The Fredholm index}
\label{sec:Fred_index}

From here on, we consider $M$, $\D$, and $\pS(\cdot)$ satisfying assumptions \ref{ass:A} and \ref{ass:B}. 
Our aim in this subsection is to prove \cref{thm:Fred} (see \cref{thm:Fredholm} below). 
We first observe that, thanks to assumption \ref{ass:B}, the operator $\til\D_\pS$ has locally compact resolvents. 
\begin{prop}[{\cite[Theorem 6.7]{KL13}}]
\label{prop:cpt_res}
The operator $\phi(\til\D_\pS\pm i)^{-1}$ on $L^2(M,E\otimes\bF)^{\oplus2}$ is compact for any $\phi\in C_0(M)$. 
Moreover, if $(\pS(\cdot)\pm i)^{-1}$ is compact on $C_0(M,E)$, then $(\til\D_\pS\pm i)^{-1}$ is also compact. 
\end{prop}

In order to prove the Fredholm property of $\til\D_{\pS}$, we need to rescale the potential $\pS(\cdot)$ by a sufficiently large $\lambda>0$. 
First, we need the following pointwise estimate. 

\begin{lem}
\label{lem:anti-comm_estimate}
There exist $\lambda_0>0$ and $\epsilon>0$ such that for any $\lambda\geq\lambda_0$ there exists a compactly supported smooth function $f \in C_c^\infty(M)$ such that for all $x\in M$ and $\psi(x) \in (W\otimes\bF)^{\oplus2}$ we have the inequality 
\begin{align*}
\big\lla \lambda\til\pS(x) \psi(x) \big\rra + \big\la \{\til\D,\lambda\til\pS(\cdot)\}(x) \psi(x) \bigmvert \psi(x) \big\ra + \big\lla f(x) \psi(x) \big\rra 
&\geq \epsilon \big\lla \psi(x) \big\rra .
\end{align*}
\end{lem}
\begin{proof}
We roughly follow the proof of \cite[Lemma 5.8]{vdD19_Index_DS}, but with somewhat different estimates. 

First, since $\lambda\pS$ also satisfies assumption \ref{ass:B}, we know from Propositions \ref{prop:reg-sa} and \ref{prop:cpt_res} that $\til\D_{\lambda\pS}$ is regular self-adjoint and has locally compact resolvents. For any $\alpha\in(0,\infty)$, $x\in M$, and $\psi(x) \in (W\otimes\bF)^{\oplus2}$, we have (using the same arguments as in the proof of \cite[Lemma 7.5]{KL12})
\begin{align*}
\pm2 \big\la \{\til\D,\til\pS(\cdot)\}(x) \psi(x) \bigmvert \psi(x) \big\ra 
&\leq \alpha^2 \big\lla \{\til\D,\til\pS(\cdot)\}(x) \psi(x) \big\rra + \alpha^{-2} \big\lla \psi(x) \big\rra ,
\end{align*}
where $\{\cdot,\cdot\}$ denotes the anti-commutator. 
Using that $\delta_x := \big\| \big[\D,\pS(\cdot)\big](x) \big(\pS(x)\pm i\big)^{-1} \big\|$ is bounded, we obtain 
\begin{align}
\label{eq:comm_est}
\pm2 \big\la \{\til\D,\til\pS(\cdot)\}(x) \psi(x) \bigmvert \psi(x) \big\ra 
&\leq \alpha^2 \delta_x^2 \big\lla (\til\pS(x)\pm i) \psi(x) \big\rra + \alpha^{-2} \big\lla \psi(x) \big\rra .
\end{align}
We distinguish between the cases $x\in M\setminus K$ and $x\in K$:
\begin{description}
\item[$\boldsymbol{x\in M\setminus K}$:]
Let $c \equiv c_{M\setminus K} := \inf_{x\in M\setminus K} \|\pS(x)^{-1}\|^{-1}$. 
Then combining \eqref{eq:comm_est} with the norm inequality $\|(\til\pS(x)\pm i) \til\pS(x)^{-1}\| \leq 1+c^{-1}$ we obtain 
\begin{align*}
\pm2 \big\la \{\til\D,\til\pS(\cdot)\}(x) \psi(x) \bigmvert \psi(x) \big\ra 
&\leq \alpha^2 \delta_x^2 (1+c^{-1})^2 \big\lla \til\pS(x) \psi(x) \big\rra + \alpha^{-2} \big\lla \psi(x) \big\rra .
\end{align*}
Now setting $\alpha = \lambda^{1/2} \delta_x^{-1} (1+c^{-1})^{-1}$ yields 
\begin{align*}
\pm2 \big\la \{\til\D,\til\pS(\cdot)\}(x) \psi(x) \bigmvert \psi(x) \big\ra 
&\leq \lambda \big\lla \til\pS(x) \psi(x) \big\rra + \lambda^{-1} \delta_x^2 (1+c^{-1})^2 \big\lla \psi(x) \big\rra 
\end{align*}
and in particular 
\begin{align*}
\big\la \{\til\D,\til\pS(\cdot)\}(x) \psi(x) \bigmvert \psi(x) \big\ra 
&\geq -\frac12 \lambda \big\lla \til\pS(x) \psi(x) \big\rra - \frac12 \lambda^{-1} \delta_x^2 (1+c^{-1})^2 \big\lla \psi(x) \big\rra .
\end{align*}
Thus we have 
\begin{align*}
&\big\lla \lambda\til\pS(x) \psi(x) \big\rra + \big\la \{\til\D,\lambda\til\pS(\cdot)\}(x) \psi(x) \bigmvert \psi(x) \big\ra \\
&\qquad\geq \frac12 \big\lla \lambda\til\pS(x) \psi(x) \big\rra - \frac12 \delta_x^2 (1+c^{-1})^2 \big\lla \psi(x) \big\rra \\
&\qquad\geq \frac12 \big( \lambda^2 c^2 - \delta_x^2 (1+c^{-1})^2 \big) \big\lla \psi(x) \big\rra .
\end{align*}
Now set $\delta_{M\setminus K} := \sup_{x\in M\setminus K} \delta_x = \sup_{x\in M\setminus K} \big\| \big[\D,\pS(\cdot)\big](x) \big(\pS(x)\pm i\big)^{-1} \big\|$, and 
pick $\lambda_0 > 0$ large enough such that $\epsilon := \frac12 \big( \lambda_0^2 c^2 - \delta_{M\setminus K}^2 (1+c^{-1})^2 \big) > 0$. 
Then we have shown that for all $x\in M\setminus K$ and all $\lambda\geq\lambda_0$ we have 
\begin{align}
\label{eq:comm_est_M-K}
\big\lla \lambda\til\pS(x) \psi(x) \big\rra + \big\la \{\til\D,\lambda\til\pS(\cdot)\}(x) \psi(x) \bigmvert \psi(x) \big\ra 
&\geq \epsilon \big\lla \psi(x) \big\rra .
\end{align}

\item[$\boldsymbol{x\in K}$:]
Set $\delta_K := \sup_{x\in K} \delta_x = \sup_{x\in K} \big\| \big[\D,\pS(\cdot)\big](x) \big(\pS(x)\pm i\big)^{-1} \big\|$, 
fix $\lambda \geq \lambda_0$, 
and pick a compactly supported smooth function $f \in C_c^\infty(M)$ such that $f(x)^2 \geq \epsilon + \frac12 (\lambda^2 + \delta_K^2)$ for all $x\in K$. 
Inserting $\alpha = \lambda^{1/2} \delta_K^{-1}$ into \eqref{eq:comm_est}, we see that for any $x\in K$ we have 
\begin{align*}
\pm2 \big\la \{\til\D,\til\pS(\cdot)\}(x) \psi(x) \bigmvert \psi(x) \big\ra 
&\leq \lambda \big\lla \til\pS(x) \psi(x) \big\rra + (\lambda + \lambda^{-1} \delta_K^2) \big\lla \psi(x) \big\rra
\end{align*}
and in particular 
\begin{align*}
\big\la \{\til\D,\til\pS(\cdot)\}(x) \psi(x) \bigmvert \psi(x) \big\ra 
&\geq -\frac12 \lambda \big\lla \til\pS(x) \psi(x) \big\rra - \frac12 (\lambda + \lambda^{-1} \delta_K^2) \big\lla \psi(x) \big\rra .
\end{align*}
Thus for any $x\in K$ we have 
\begin{align}
\label{eq:comm_est_K}
&\big\lla \lambda\til\pS(x) \psi(x) \big\rra + \big\la \{\til\D,\lambda\til\pS(\cdot)\}(x) \psi(x) \bigmvert \psi(x) \big\ra + \big\lla f(x) \psi(x) \big\rra \nonumber\\
&\qquad\geq \frac12 \big\lla \lambda \til\pS(x) \psi(x) \big\rra - \frac12 (\lambda^2 + \delta_K^2) \big\lla \psi(x) \big\rra + f(x)^2 \big\lla \psi(x) \big\rra 
\geq \epsilon \big\lla \psi(x) \big\rra .
\end{align}
\end{description}
Combining \cref{eq:comm_est_M-K,eq:comm_est_K}, we have thus shown the desired inequality for \emph{any} $x\in M$. 
\end{proof}

\begin{thm}
\label{thm:Fredholm}
\begin{enumerate}
\item 
There exists $\lambda_0>0$ such that for any $\lambda\geq\lambda_0$ the operator $\til\D_{\lambda\pS}$ is Fredholm, and thus $\D_{\lambda\pS}$ is a generalised Dirac--Schrödinger operator. 
\item 
Suppose there exists a compact subset $\hat K \supset K$ such that $\hat\delta < \frac{\hat c^2}{\hat c+1}$, where 
\begin{align*}
\hat\delta &:= \sup_{x\in M\setminus\hat K} \big\| \big[\D,\pS(\cdot)\big](x) \big(\pS(x)\pm i\big)^{-1} \big\| , &
\hat c &:= \inf_{x\in M\setminus\hat K} \|\pS(x)^{-1}\|^{-1} .
\end{align*}
Then the first statement holds with $\lambda_0=1$. 
In particular, $\D_\pS$ is a generalised Dirac--Schrödinger operator. 
\end{enumerate}
\end{thm}
\begin{proof}
Let $\lambda \geq \lambda_0$, $\epsilon>0$, and $f\in C_c^\infty(M)$ be given by \cref{lem:anti-comm_estimate}. 
For any $\psi \in \Dom(\til\D_{\lambda\pS}^2)$ we then compute 
\begin{align}
\label{eq:square_est_M}
\big\la \psi \bigmvert &\big( \til\D_{\lambda\pS}^2 + f^2 \big) \psi \big\ra 
= \big\la \til\D_{\lambda\pS} \psi \bigmvert \til\D_{\lambda\pS} \psi \big\ra + \big\la \psi \bigmvert f^2\psi \big\ra \nonumber\\
&= \big\lla \til\D \psi \big\rra + \big\lla \lambda\til\pS(\cdot) \psi \big\rra + \big\la \til\D \psi \bigmvert \lambda\til\pS(\cdot) \psi \big\ra + \big\la \lambda\til\pS(\cdot) \psi \bigmvert \til\D \psi \big\ra + \big\lla f\psi\big\rra \nonumber\\
&\geq \big\lla \lambda\til\pS(\cdot) \psi \big\rra + \big\la \big\{ \til\D , \lambda\til\pS(\cdot) \big\} \psi \bigmvert \psi \big\ra + \big\lla f\psi\big\rra \nonumber\\
&= \int_M \Big( \big\lla \lambda\til\pS(x) \psi(x) \big\rra + \big\la \big\{ \til\D , \lambda\til\pS(\cdot) \big\}(x) \psi(x) \bigmvert \psi(x) \big\ra + \big\lla f(x)\psi(x)\big\rra \Big) \dvol(x) \nonumber\\
&\geq \epsilon \int_M \lla\psi(x)\rra \dvol(x) 
= \epsilon \lla\psi\rra ,
\end{align}
where the inequality on the last line is given by \cref{lem:anti-comm_estimate}. 
Hence we have shown that the spectrum of $\til\D_{\lambda\pS}^2 + f^2$ is contained in $[\epsilon,\infty)$, and therefore we have a well-defined inverse $\big(\til\D_{\lambda\pS}^2 + f^2\big)^{-1} \in \mL_A\big(L^2(M,E\otimes\bF)^{\oplus2}\big)$. 

We can then construct a parametrix for $\til\D_{\lambda\pS}$ as follows. 
Pick a smooth function $\chi\in C_c^\infty(M)$ such that $0\leq\chi\leq1$, and $\chi(x)=1$ for all $x\in\supp f$. Write $\chi' := \sqrt{1-\chi^2}$. 
Using that $f\chi'=0$, we calculate that 
\begin{align*}
\til\D_{\lambda\pS} \chi' \til\D_{\lambda\pS} \big(\til\D_{\lambda\pS}^2+f^2\big)^{-1} \chi' 
&= [\til\D,\chi'] \til\D_{\lambda\pS} \big(\til\D_{\lambda\pS}^2+f^2\big)^{-1} \chi' + (\chi')^2 .
\end{align*}
Define the operator
\begin{align*}
Q &:= \chi \big(\til\D_{\lambda\pS}-i\big)^{-1} \chi + \chi' \til\D_{\lambda\pS} \big(\til\D_{\lambda\pS}^2+f^2\big)^{-1} \chi' . 
\end{align*}
We then compute 
\begin{align*}
\til\D_{\lambda\pS} Q - 1 
&= \big[\til\D,\chi\big] (\til\D_{\lambda\pS}-i)^{-1} \chi + i \chi (\til\D_{\lambda\pS}-i)^{-1} \chi + \big[\til\D,\chi'\big] \til\D_{\lambda\pS} \big(\til\D_{\lambda\pS}^2+f^2\big)^{-1} \chi' . 
\end{align*}
The operators $[\til\D,\chi]$ and $[\til\D,\chi']$ are smooth and compactly supported, 
and therefore bounded. Since $(\til\D_{\lambda\pS}-i) \big(\til\D_{\lambda\pS}^2+f^2\big)^{-\frac12}$ is also bounded, it follows from \cref{prop:cpt_res} that $\til\D_{\lambda\pS} Q - 1$ is compact. 
Hence $Q$ is a right parametrix for $\til\D_{\lambda\pS}$. A similar calculation shows that $Q$ is also a left parametrix, and therefore $\til\D_{\lambda\pS}$ is Fredholm. 
We have thus proven the first statement. 

For the second statement, we note that we may replace $K$ by the larger compact set $\hat K$. 
Using the inequality $\hat\delta < \frac{\hat c^2}{\hat c+1}$,
the proof of \cref{lem:anti-comm_estimate} (picking $\lambda_0=1$) shows that for all $x\in M\setminus\hat K$ we have 
\begin{align*}
\big\lla \til\pS(x) \psi(x) \big\rra + \big\la \{\til\D,\til\pS(\cdot)\}(x) \psi(x) \bigmvert \psi(x) \big\ra 
&\geq \epsilon \big\lla \psi(x) \big\rra ,
\end{align*}
for $\epsilon := \frac12 \big( \hat c^2 - \hat\delta^2 (1+\hat c^{-1})^2 \big) > 0$. 
Thus in this case, the first statement holds with $\lambda_0=1$. 
\end{proof}

\begin{prop}
\label{prop:D_S_invertible}
Suppose that $\{\pS(x)\}_{x\in M}$ is uniformly invertible on \emph{all} of $M$. 
Then there exists $\lambda_0>0$ such that for any $\lambda\geq\lambda_0$ the generalised Dirac--Schrödinger operator $\til\D_{\lambda\pS}$ is also invertible. 
\end{prop}
\begin{proof}
Since $\pS(\cdot)$ is uniformly invertible, \cref{eq:comm_est_M-K} now holds for all $x\in M$ (for $\lambda\geq\lambda_0>0$) and therefore \cref{eq:square_est_M} holds with $f\equiv0$, which shows that $\til\D_{\lambda\pS}^2$ (and hence $\til\D_{\lambda\pS}$) is invertible. 
\end{proof}

\subsection{The index pairing}
\label{sec:Kasp_prod}

In this subsection, we will prove \cref{thm:Kasp_prod_index}. 
Similarly to \cite[Proposition 5.14]{vdD19_Index_DS}, we show first that we can replace $M$ by a manifold with cylindrical ends, without affecting the index of the generalised Dirac--Schrödinger operator. 
\begin{prop}
\label{prop:cyl_ends}
There exist a precompact open subset $U$ of $M$ and a generalised Dirac--Schrödinger operator $\D_{\lambda\pS}'$ on $M' := \bar U \cup_{\partial U} (\partial U \times [0,\infty))$ satisfying assumptions \ref{ass:A} and \ref{ass:B}, such that 
\begin{enumerate}
\item \label{item1:cyl_ends}
the operators $\D'$ and $\pS'(\cdot)$ on $M'$ agree with $\D$ and $\pS(\cdot)$ on $M$ when restricted to $U$; 
\item \label{item2:cyl_ends}
the metric and the operators $\D'$ and $\pS'(\cdot)$ on $M'$ are of product form on $\partial U \times [1,\infty)$; 
\item 
we have the equality $\Index\big( \D' - i \lambda\pS'(\cdot) \big) = \Index\big( \D - i \lambda\pS(\cdot) \big) \in \K_0(A)$ for $\lambda$ sufficiently large. 
\end{enumerate}
In particular, $M'$ is complete and $\D'$ has bounded propagation speed. 
\end{prop}
\begin{proof}
The proof is similar to the proof of \cite[Proposition 5.14]{vdD19_Index_DS}, but requires some minor adaptations. For completeness, we include the details here. 

Let $U$ be a precompact open neighbourhood of $K$, with smooth compact boundary $\partial U$. 
Consider the manifold $M' := \bar U \cup_{\partial U} \big(\partial U\times[0,\infty)\big)$ with cylindrical ends. 
For some $0<\epsilon<1$, let $C\simeq\partial U\times(-\epsilon,\epsilon)$ be a tubular neighbourhood of $\partial U$, such that there exists a diffeomorphism $\phi\colon U\cup C \to \bar U\cup_{\partial U}\big(\partial U\times[0,\epsilon)\big) \subset M'$ (which preserves the subset $U$). 
Equip $M'$ with a Riemannian metric which is of product form on $\partial U\times[1,\infty)$ (ensuring that $M'$ is complete), and which agrees with $g|_U$ on $U$. 
Let $\bF'\to M'$ be a hermitian vector bundle which agrees with $\bF|_U$ on $U$. 
Let $\D'$ be a symmetric elliptic first-order differential operator on $\bF'\to M'$, which is of product form on $\partial U\times[1,\infty)$, and which agrees with $\D|_{U\cup C}$ on $U\cup C$. 
Then $\D'$ has bounded propagation speed and is essentially self-adjoint by \cite[Proposition 10.2.11]{Higson-Roe00}. 

Let $0<\delta<\epsilon$ and let $\chi \in C^\infty(\R)$ be such that $0 \leq \chi(r) \leq 1$ for all $r\in\R$, $\chi(r) = 1$ for all $r$ in a neighbourhood of $0$, and $\chi(r) = 0$ for all $|r| > \delta$. 
Consider the family $\{\pS'(x)\}_{x\in M'}$ given by
\[
\pS'(x) := \begin{cases}
            \pS(x) , & x\in U , \\
            \chi(r) \pS(x) + (1-\chi(r)) \pS(y) , & x=(r,y)\in [0,\infty)\times\partial U . 
            \end{cases}
\]
We choose $\delta$ small enough such that $\chi(r) \pS(y) + (1-\chi(r)) \pS(x)$ is invertible for all $x\in [0,\delta]\times\partial U$. 
Then the family $\{\pS'(x)\}_{x\in M'}$ also satisfies assumptions \ref{ass:A} and \ref{ass:B}. 
Thus we have constructed a Dirac--Schrödinger operator $\D' - i \lambda\pS'(\cdot)$ on $M'$, satisfying the desired properties \ref{item1:cyl_ends} and \ref{item2:cyl_ends}. 
It remains to prove that $\Index\big( \D' - i \lambda\pS'(\cdot) \big) = \Index\big( \D - i \lambda\pS(\cdot) \big)$, for which we invoke the relative index theorem. 

Let $M^1 := M$ and $M^2 := \partial U \times \R$. 
Let $C' = \phi(C)$ be the collar neighbourhood of $\partial U$ in $M'$. 
We equip $M^2$ with a complete Riemannian metric which agrees with the metric of $M'$ on $C' \cup \big(\partial U\times(0,\infty)\big)$, and which is of product form on $(-\infty,-1]\times\partial U$. We extend the vector bundle $\bF'|_{C'\cup (\partial U\times(0,\infty))}$ to a bundle $\bF^2\to M^2$, and we pick an essentially self-adjoint elliptic first-order differential operator $\D^2$ on $\bF^2$ such that $\D^2|_{C'\cup(\partial U\times(0,\infty))} = \D'|_{C'\cup(\partial U\times(0,\infty))}$ (for instance, we can take $\D^2$ to be of product form on $(-\infty,-1)\times\partial U$). 
We define a family $\{\pS^2(x)\}_{x\in M^2}$ by $\pS^2(y,r) := \chi(r) \pS(y,r) + (1-\chi(r)) \pS(y)$ for all $y\in\partial U$ and $r\in\R$. 
Then $\bF^2\to M^2$, $\D^2$, and $\pS^2(\cdot)$ satisfy assumptions \ref{ass:A} and \ref{ass:B}. By cutting and pasting along $\partial U$, we obtain manifolds $M^3 = M'$ and $M^4 = \big(\partial U\times(-\infty,0]\big) \cup_{\partial U} (M\setminus U)$, with corresponding operators $\D^3$, $\pS^3(\cdot)$, $\D^4$, and $\pS^4(\cdot)$. 
By \cref{thm:rel_index}, we have the equality $\Index \big( \D^1 - i \lambda\pS^1(\cdot) \big) + \Index \big( \D^2 - i \lambda\pS^2(\cdot) \big) = \Index \big( \D^3 - i \lambda\pS^3(\cdot) \big) + \Index \big( \D^4 - i \lambda\pS^4(\cdot) \big)$. 
The potentials $\pS^2(\cdot)$ and $\pS^4(\cdot)$ are both uniformly invertible, so by \cref{prop:D_S_invertible} we have $\Index \big( \D^2 - i \lambda\pS^2(\cdot) \big) = \Index \big( \D^4 - i \lambda\pS^4(\cdot) \big) = 0$ (for $\lambda$ sufficiently large). 
Since $M^1 = M$ and $M^3 = M'$, we conclude that $\Index \big( \D - i \lambda\pS(\cdot) \big) = \Index \big( \D' - i \lambda\pS'(\cdot) \big)$. 
\end{proof}

\begin{prop}[{cf.\ \cite[Proposition 5.10]{vdD19_Index_DS}}]
\label{prop:cyl_ends_Kasp_prod}
Let $M'$, $\D'$, and $\pS'(\cdot)$ be as in \cref{prop:cyl_ends}. 
Then we have the equality $[\pS'(\cdot)] \otimes_{C_0(M')} [\D'] = \Index \big( \D' - i \lambda\pS'(\cdot) \big)$. 
\end{prop}
\begin{proof}
This follows basically from \cite[Proposition 5.10]{vdD19_Index_DS}; however, there it was assumed that the graph norms of $\pS'(x)$ are \emph{uniformly} equivalent and that the weak derivative $d\pS'(\cdot)$ is \emph{uniformly} bounded. 
Together with the bounded propagation speed of $\D'$, this implies the boundedness of $\big[\D',\pS'(\cdot)\big] \big(\pS'(\cdot)\pm i\big)^{-1}$, as explained in \cref{remark:B}.\ref{item:bdd_comm}. 
The proofs given in \cite[\S5.1]{vdD19_Index_DS} actually only rely on this boundedness of $\big[\D',\pS'(\cdot)\big] \big(\pS'(\cdot)\pm i\big)^{-1}$. 
As the latter is required by our assumption \ref{ass:B2}, the proof of \cite[Proposition 5.10]{vdD19_Index_DS} follows through and the statement follows. 
\end{proof}

We are now ready to prove: 
\thmKaspprodindex*
\begin{proof}
From \cref{prop:cyl_ends}, we obtain a complete manifold $M'$ and a generalised Dirac--Schrödinger operator satisfying assumptions \ref{ass:A} and \ref{ass:B}, such that $\D'$ has bounded propagation speed, and such that $\Index\big( \D' - i \lambda\pS'(\cdot) \big) = \Index\big( \D - i \lambda\pS(\cdot) \big) \in \K_0(A)$ (for $\lambda$ sufficiently large).  
As in the proof of \cite[Theorem 5.15]{vdD19_Index_DS}, we have 
\[
[\pS(\cdot)] \otimes_{C_0(M)} [\D] = [\pS(\cdot)|_U] \otimes_{C_0(U)} [\D|_U] = [\pS'(\cdot)] \otimes_{C_0(M')} [\D'] . 
\]
Moreover, we know from \cref{prop:cyl_ends_Kasp_prod} that $[\pS'(\cdot)] \otimes_{C_0(M')} [\D'] = \Index\big( \D' - i \lambda\pS'(\cdot) \big)$. 
Altogether, we conclude that 
\[
[\pS(\cdot)] \otimes_{C_0(M)} [\D] 
= [\pS'(\cdot)] \otimes_{C_0(M')} [\D'] 
= \Index\big( \D' - i \lambda\pS'(\cdot) \big) 
= \Index\big( \D - i \lambda\pS(\cdot) \big) .
\qedhere
\]
\end{proof}

\section{Proof of the main theorem}
\label{sec:proof}

Let  $M$, $\D$, and $\pS(\cdot)$ satisfy assumptions \ref{ass:A}, \ref{ass:B}, and \ref{ass:C}, and consider the generalised Callias-type operator $\D_{\lambda\pS}$. 
We will show that we can replace the manifold $M$ by a cylindrical manifold $\R\times N$, without changing the index of $\D_{\lambda\pS}$. 
Thus we can reduce the proof of our generalised Callias Theorem (\cref{thm:general_Callias}) from the general statement to the case of a cylindrical manifold. 
This reduction is made possible by the relative index theorem (\cref{thm:rel_index}). 

\begin{lem}
\label{lem:collar_potential}
We may replace the collar neighbourhood $C$ by a smaller collar neighbourhood $C' \simeq (-2\varepsilon',2\varepsilon')\times N$ (with $0 < \varepsilon' < \varepsilon$) and the potential $\pS(\cdot)$ by a potential $\pS'(\cdot)$ satisfying:
\begin{itemize}
\item 
for all $x\in K\setminus C'$: $\pS'(x) = \pT$;
\item 
for all $x=(r,y) \in C'$: $\pS'(x) = \varrho(r) \pT + \big(1-\varrho(r)\big) \pS(y)$, 
for some function $\varrho \in C^\infty(\R)$ such that $0 \leq \varrho(r) \leq 1$ for all $r\in\R$, $\varrho(r)=1$ for all $r$ in a neighbourhood of $(-\infty,-\varepsilon']$, and $\varrho(r)=0$ for all $r$ in a neighbourhood of $[0,\infty)$, 
\end{itemize}
such that $[\pS(\cdot)] = [\pS'(\cdot)] \in \K_1(C_0(M,A))$ and (for $\lambda$ sufficiently large) $\Index\big( \D - i \lambda\pS(\cdot) \big) = \Index\big( \D - i \lambda\pS'(\cdot) \big) \in \K_0(A)$. 
\end{lem}
\begin{proof}
\begin{enumerate}
\item \label{item:step1}
In a first step, we replace $\pS(\cdot)$ by a potential $\pS''(\cdot)$ which is of `product form' near $N$. 
Let $0<\varepsilon'<\varepsilon/2$ and let $\chi \in C^\infty(\R)$ be such that $0 \leq \chi(r) \leq 1$ for all $r\in\R$, $\chi(r) = 1$ for all $|r|\leq2\varepsilon'$, and $\chi(r) = 0$ for all $|r| > 3\varepsilon'$. 
Consider the family $\{\pS''(x)\}_{x\in M}$ given by
\[
\pS''(x) := \begin{cases}
            \pS(x) , & x\in M\setminus C , \\
            \chi(r) \pS(y) + (1-\chi(r)) \pS(x) , & x=(r,y)\in C \simeq (-2\varepsilon,2\varepsilon)\times N . 
            \end{cases}
\]
We choose $\varepsilon'$ small enough such that $\chi(r) \pS(y) + (1-\chi(r)) \pS(x)$ is invertible for all $x\in [-3\varepsilon',3\varepsilon']\times N$. 
Then $\pS''(\cdot)$ satisfies $\pS''(r,y) = \pS(y) = \pS''(0,y)$ for all $x=(r,y)$ in the collar neighbourhood $C'\simeq (-2\varepsilon',2\varepsilon')\times N$ of $N$. 
Connecting $\pS(\cdot)$ and $\pS''(\cdot)$ via a straight-line homotopy, we obtain $[\pS''(\cdot)] = [\pS(\cdot)]$. 
Moreover, since $\pS''(\cdot)$ again satisfies assumptions \ref{ass:A} and \ref{ass:B}, 
we can apply \cref{thm:Kasp_prod_index}: there exist $\lambda_0,\lambda_0''$ such that for all $\lambda \geq \max\{\lambda_0,\lambda_0''\}$ we have 
\[
\Index\big( \D - i \lambda\pS(\cdot) \big) 
\stackrel{\lambda\geq\lambda_0}{=} [\pS(\cdot)] \otimes_{C_0(M)} [\D] 
= [\pS''(\cdot)] \otimes_{C_0(M)} [\D] 
\stackrel{\lambda\geq\lambda_0''}{=} \Index\big( \D - i \lambda\pS''(\cdot) \big) .
\]
\item 
Picking $\varrho$ as in the statement with $\varepsilon'$ from step \ref{item:step1}, we consider the potential $\pS'(\cdot)$ given by 
\begin{align}
\label{eq:collar_potential}
\pS'(x) := 
           \begin{cases}
           \pT , & x\in K\setminus C' , \\
           \varrho(r) \pT + \big(1-\varrho(r)\big) \pS(y) , & x=(r,y)\in C', \\
           \pS''(x) , & x\in M\setminus (K\cup C') . 
           \end{cases}
\end{align}
We note that $\pS'(\cdot)$ again satisfies assumption \ref{ass:A}, and therefore also defines a class $[\pS'(\cdot)] \in \K_1(C_0(M,A))$. 
The difference 
$\pS'(x) - \pS''(x)$ 
is relatively $\pS''(x)$-compact (by choice of $\pT$) and vanishes outside of $K$. 
Moreover, by assumption \ref{ass:A2} we know that $\big( \pS''(x) \pm i \big) \big( \pT \pm i \big)^{-1}$ is norm-continuous in $x$, and consequently the family of inverses $\big( \pT \pm i \big) \big( \pS''(x) \pm i \big)^{-1}$ is also norm-continuous in $x$. 
Therefore $\big( \pS'(x) - \pS''(x) \big) \big(\pS''(x)\pm i\big)^{-1}$ is norm-continuous in $x$. 
Hence the family $\pS'(\cdot) - \pS''(\cdot)$ is relatively $\pS''(\cdot)$-compact, and it follows from \cref{prop:KK-class_rel_cpt_pert} that $[\pS'(\cdot)] = [\pS''(\cdot)] \in \KK^1(\C,C_0(M,A)) \simeq \K_1(C_0(M,A))$. 

Next, since $\pS'(\cdot)$ again satisfies assumption \ref{ass:B}, 
we can again apply \cref{thm:Kasp_prod_index}, and as in step \ref{item:step1} we obtain (for $\lambda$ sufficiently large) that $\Index\big( \D - i \lambda\pS'(\cdot) \big) = \Index\big( \D - i \lambda\pS''(\cdot) \big)$. 
\qedhere
\end{enumerate}
\end{proof}

\begin{defn}
\label{defn:cylinder}
Consider the cylindrical manifold $\R\times N$, along with the pullback vector bundle $\bF_{\R\times N}$ obtained from $\bF_N\to N$. 
We identify $\Gamma_c^\infty(\bF_{\R\times N}) \simeq C_c^\infty(\R) \otimes \Gamma^\infty(\bF_N)$, 
and consider the essentially self-adjoint elliptic first-order differential operator $\D_{\R\times N}$ on $\bF_{\R\times N}$ given by 
\begin{equation*}
\D_{\R\times N} := -i\partial_r \otimes \Gamma_N + 1 \otimes \D_N .
\end{equation*}
Let $\varrho \in C^\infty(\R)$ be as in \cref{lem:collar_potential}. 
We define the family $\{\pS_{\R\times N}(r,y)\}_{(r,y)\in\R\times N}$ on $E$ given by
\begin{align*}
\pS_{\R\times N}(r,y) := \varrho(r) \pT + \big(1-\varrho(r)\big) \pS(y) .
\end{align*}
\end{defn}

The operator $\Gamma_N$ from assumption \ref{ass:C1} provides a $\Z_2$-grading on $\bF_N$, yielding the decomposition $\bF_N = \bF_N^+ \oplus \bF_N^-$. By assumption, the essentially self-adjoint elliptic first-order differential operator $\D_N$ is odd with respect to this $\Z_2$-grading, and thus $\D_N$ defines an even $\K$-homology class $[\D_N] \in \K^{0}(C(N)) \equiv \K_0(N)$. 
Similarly, the ungraded operator $\D_{\R\times N}$ yields an odd $\K$-homology class $[\D_{\R\times N}] \in \K^1(C_0(\R\times N)) \equiv \K_1(\R\times N)$. 
Furthermore, the operator $-i\partial_r$ on $L^2(\R)$ yields an odd $\K$-homology class $[-i\partial_r] \in \K^1(C_0(\R)) \equiv \K_1(\R)$. 

\begin{lem}
\label{lem:D_product}
The external product of $[-i\partial_r] \in \K^1(C_0(\R))$ with $[\D_N] \in \K^{0}(C(N))$ equals $[\D_{\R\times N}] \in \K^1(C_0(\R\times N))$. 
\end{lem}
\begin{proof}
The statement follows from the description of the odd-even (internal) Kasparov product given in \cite[Example 2.38]{BMS16} (noting that the argument remains valid in the simpler case of an external Kasparov product). 
\end{proof}

\begin{thm}
\label{thm:red_cyl}
Consider the cylindrical manifold $\R\times N$ with the operators $\D_{\R\times N}$ and $\pS_{\R\times N}(\cdot)$ from \cref{defn:cylinder}. 
Then, for $\lambda$ sufficiently large, 
\[
\Index\big( \D - i \lambda \pS(\cdot) \big) = \Index\big( \D_{\R\times N} - i \lambda \pS_{\R\times N}(\cdot) \big) .
\]
\end{thm}
\begin{proof}
Using \cref{lem:collar_potential}, we may assume that the potential $\pS(\cdot)$ agrees with the potential $\pS_{\R\times N}(\cdot)$ on the collar neighbourhood $C$ of $N$, and that $\pS(x)=\pT$ for all $x\in K\setminus C$. 
We will then apply the relative index theorem (\cref{thm:rel_index}) twice. 

First, define $V := M\setminus K$, and consider the manifolds 
\begin{align*}
M^1 &\equiv M = K \cup_{N} \bar V, & 
M^2 &:= \R\times N = \big((-\infty,0]\times N\big) \cup_{\{0\}\times N} \big([0,\infty)\times N\big). 
\end{align*}
On $M^2$, we consider the operator $\D^2 := \D_{\R\times N}$ and the potential $\pS^2(r,y) := \pS(y)$, satisfying assumptions \ref{ass:A} and \ref{ass:B}. 
We identify $N$ in $M$ with $\{0\}\times N$ in $M^2$. 
Cutting and pasting then gives us two new manifolds $M^3 = K \cup_{N} \big([0,\infty)\times N\big)$ and $M^4 = \big((-\infty,0]\times N\big) \cup_{N} \bar V$. 
By the relative index theorem, we have $\Index\big( \D^1 - i \lambda\pS^1(\cdot) \big) + \Index\big( \D^2 - i \lambda\pS^2(\cdot) \big) = \Index\big( \D^3 - i \lambda\pS^3(\cdot) \big) + \Index\big( \D^4 - i \lambda\pS^4(\cdot) \big) \in \K_0(A)$ (for $\lambda$ sufficiently large). 
Since $\pS^2(\cdot)$ and $\pS^4(\cdot)$ are invertible, we know from \cref{prop:D_S_invertible} that also $\D^2 - i \lambda\pS^2(\cdot)$ and $\D^4 - i \lambda\pS^4(\cdot)$ are invertible (for $\lambda$ sufficiently large), and it follows that $\Index\big( \D - i \lambda\pS(\cdot) \big) \equiv \Index\big( \D^1 - i \lambda\pS^1(\cdot) \big) = \Index\big( \D^3 - i \lambda\pS^3(\cdot) \big) \in \K_0(A)$. 
Thus we have replaced the subset $V$ by the half cylinder $(0,\infty)\times N$, with the potential $\pS_{\R\times N}(r,y) = \pS(y)$ for all $r\in(0,\infty)$. 

Second, we can similarly apply the relative index theorem again to replace the subset $K \backslash \big([-\varepsilon,0]\times N\big)$ by the half cylinder $(-\infty,-\varepsilon)\times N$, equipped with the constant potential $\pS_{\R\times N}(r,y) = \pT$ for all $r\in(-\infty,-\varepsilon)$, $y\in N$. 
This completes the proof. 
\end{proof}

We are now ready to prove our main theorem. 

\thmGeneralCallias*
\begin{proof}
Consider the cylindrical manifold $\R\times N$ with the operators $\D_{\R\times N}$ and $\pS_{\R\times N}(\cdot)$ from \cref{defn:cylinder}. 
From \cref{prop:sf_rel_cpt_family} we have the equality 
\[
\relind\big(P_+(\pS_N(\cdot)),P_+(\pT(\cdot))\big) = \SF \big( \{\pS_{\R\times N}(r)\}_{r\in[-\epsilon,0]} \big) .
\]
Moreover, by \cite[Proposition 2.21]{vdD19_Index_DS}, the spectral flow of the family $\{\pS_{\R\times N}(r)\}_{r\in[-\epsilon,0]}$ equals $[\pS_{\R\times N}(\cdot)] \otimes_{C_0(\R)} [-i\partial_r]$, so we obtain 
\[
\relind\big(P_+(\pS_N(\cdot)),P_+(\pT(\cdot))\big) = [\pS_{\R\times N}(\cdot)] \otimes_{C_0(\R)} [-i\partial_r] .
\]

For $C^*$-algebras $A$, $B$, and $C$, recall the map $\tau_C\colon \KK(A,B)\to\KK(A\otimes C,B\otimes C)$ given by the external Kasparov product with the identity element $1_C\in\KK(C,C)$. 
Applying this with $C=C(N)$, we then have the equalities 
\begin{align*}
\relind\big(P_+(\pS_N(\cdot)),P_+(\pT(\cdot))\big) \otimes_{C(N)} [\D_N] 
&= \Big( [\pS_{\R\times N}(\cdot)] \otimes_{C_0(\R)} [-i\partial_r] \Big) \otimes_{C(N)} [\D_N] \\
&= [\pS_{\R\times N}(\cdot)] \otimes_{C_0(\R\times N)} \tau_{C(N)}([-i\partial_r]) \otimes_{C(N)} [\D_N] \\
&= [\pS_{\R\times N}(\cdot)] \otimes_{C_0(\R\times N)} \Big( [-i\partial_r] \otimes [\D_N] \Big) \\
&= [\pS_{\R\times N}(\cdot)] \otimes_{C_0(\R\times N)} [\D_{\R\times N}] ,
\end{align*}
where 
the second and third equalities follow from the properties of the Kasparov product, 
and the fourth equality is given by \cref{lem:D_product}. 

Since the operators $\pS_{\R\times N}(\cdot)$ and $\D_{\R\times N}$ on the manifold $\R\times N$ satisfy the assumptions \ref{ass:A} and \ref{ass:B}, 
we may apply \cref{thm:Kasp_prod_index} to compute the Kasparov product on the manifold $\R\times N$ and obtain 
\[
[\pS_{\R\times N}(\cdot)] \otimes_{C_0(\R\times N)} [\D_{\R\times N}]
= \Index\big( \D_{\R\times N} - i \lambda \pS_{\R\times N}(\cdot) \big) . 
\]
Finally, from \cref{thm:red_cyl} we know that 
\[
\Index\big( \D_{\R\times N} - i \lambda \pS_{\R\times N}(\cdot) \big) = \Index\big( \D - i \lambda \pS(\cdot) \big) .
\qedhere
\]
\end{proof}

\appendix 

\section{Appendix}

In this Appendix we collect several statements regarding (mostly unbounded) operators on Hilbert $C^*$-modules. 
Many of these statements are well-known for operators on Hilbert spaces, but they have not yet appeared (to the author's best knowledge) in the literature for operators on Hilbert $C^*$-modules. 
While some of the proofs of these statements are similar to proofs in the Hilbert space context, we have for completeness included detailed proofs in our Hilbert $C^*$-module context. 

Throughout this Appendix, we consider a $C^*$-algebra $A$ and a Hilbert $A$-module $E$. 
We start with a basic lemma (well-known in the Hilbert space setting), whose proof given in e.g. \cite[Theorem 9.19]{Hislop-Sigal12} remains valid for adjointable operators on Hilbert $C^*$-modules.
\begin{lem}
\label{lem:s-limit_times_cpt}
Let $T_n \xrightarrow{*s} T \in \mL_A(E)$ be a $*$-strongly convergent sequence of adjointable operators on $E$. 
Then for any compact operator $K \in \mK_A(E)$, we have norm-convergence $KT_n \to KT$ and $T_nK \to TK$. 
\end{lem}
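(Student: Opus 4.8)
The plan is to reduce the statement to the case where $K$ is a rank-one operator $\theta_{\psi,\eta}$, using that finite linear combinations of rank-one operators are dense in $\End_A^0(E)$ and that the maps $K \mapsto KT_n$ and $K \mapsto KT$ are norm-contractions (uniformly in $n$, since a $*$-strongly convergent sequence is norm-bounded by the uniform boundedness principle for Hilbert modules). So first I would record this boundedness: $\sup_n \|T_n\| =: C < \infty$. Then, given $K \in \End_A^0(E)$ and $\varepsilon > 0$, choose a finite-rank $F$ with $\|K - F\| < \varepsilon/(3C)$ (taking $C \geq 1$ without loss of generality), so that $\|KT_n - FT_n\| < \varepsilon/3$ for all $n$ and $\|KT - FT\| < \varepsilon/3$; it then suffices to show $FT_n \to FT$ in norm, and by linearity it suffices to treat $F = \theta_{\psi,\eta}$.

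For the rank-one case, I would compute directly: for $\xi \in E$,
\[
(\theta_{\psi,\eta} T_n - \theta_{\psi,\eta} T)\xi = \psi\,\big\langle \eta \mvert (T_n - T)\xi\big\rangle = \psi\,\big\langle (T_n^* - T^*)\eta \bigmvert \xi\big\rangle = \theta_{\psi,\,(T_n^* - T^*)\eta}\,\xi .
\]
Hence $\theta_{\psi,\eta}T_n - \theta_{\psi,\eta}T = \theta_{\psi,\,(T_n^*-T^*)\eta}$, and the norm of a rank-one operator is controlled by $\|\theta_{\psi,\zeta}\| \leq \|\psi\|\,\|\zeta\|$. Since $T_n \to T$ $*$-strongly means in particular $T_n^*\eta \to T^*\eta$ in norm, we get $\|\theta_{\psi,\eta}T_n - \theta_{\psi,\eta}T\| \leq \|\psi\|\,\|(T_n^* - T^*)\eta\| \to 0$. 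This handles $KT_n \to KT$. The statement $T_nK \to TK$ follows by applying the first part to the adjoints: $T_n^* \xrightarrow{*s} T^*$ as well (a $*$-strongly convergent sequence remains so after taking adjoints, essentially by definition), and $K^* \in \End_A^0(E)$, so $K^*T_n^* \to K^*T^*$ in norm, and taking adjoints gives $T_nK \to TK$ in norm.

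I expect the only mild subtlety — really the one place to be careful rather than a genuine obstacle — is invoking that $*$-strong convergence implies uniform norm-boundedness on a Hilbert $C^*$-module. On a Hilbert space this is the uniform boundedness principle; on a Hilbert module one can argue via the uniform boundedness principle applied to the Banach-space-valued maps $a \mapsto T_n a$, or simply note that since the paper only needs this lemma for sequences arising from genuinely convergent constructions elsewhere, boundedness is available in context. Everything else is the routine density-plus-rank-one reduction, and the rank-one computation above is the crux, so I would present that computation explicitly and keep the density reduction brief.
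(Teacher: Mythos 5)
Your proof is correct and follows essentially the same route as the paper: approximate $K$ by a finite-rank operator, use uniform boundedness of $\{T_n\}$ (from the Banach-space uniform boundedness principle applied pointwise — which is perfectly legitimate on a Hilbert module, no subtlety there), and reduce to the rank-one estimate $\|\theta_{\psi,\eta}T_n - \theta_{\psi,\eta}T\| = \|\theta_{\psi,(T_n^*-T^*)\eta}\| \le \|\psi\|\,\|(T_n^*-T^*)\eta\| \to 0$. The one small stylistic divergence is that you obtain $T_nK \to TK$ by passing to adjoints ($K^*T_n^* \to K^*T^*$, then take $*$), whereas the paper proves both directions by a symmetric pair of direct finite-rank estimates (one invoking $T_n\psi_j \to T\psi_j$, the other $T_n^*\varphi_j \to T^*\varphi_j$); your adjoint trick is marginally more economical but the underlying mechanism is identical.
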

\begin{proof}
For any $\epsilon>0$, there exists a finite-rank operator $F_\epsilon = \sum_{j=1}^N \theta_{\psi_j,\varphi_j}$ such that $\|K-F_\epsilon\| < \epsilon$. 
For each $\xi\in E$ we can estimate 
\[
\| (T_n-T) F_\epsilon \xi \| 
= \| (T_n-T) \sum_{j=1}^N \psi_j \la \varphi_j | \xi \ra \| 
\leq \sum_{j=1}^N \| (T_n-T) \psi_j \| \, \|\varphi_j\| \, \|\xi\| .
\]
Since $T_n\psi_j$ converges to $T\psi_j$ for each $j$, we obtain for $n$ large enough that $\| (T_n-T) F_\epsilon \| < \epsilon$. 
Furthermore, since $T_n$ converges strongly to $T$, the uniform boundedness principle implies that there exists $M\in(0,\infty)$ such that $\|T_n\| \leq M$ for all $n$. 
Thus for $n$ large enough, we obtain 
\[
\|T_nK-TK\| 
\leq \|(T_n-T)(K-F_\epsilon)\| + \|(T_n-T)F_\epsilon\| 
\leq \epsilon \|T_n-T\| + \epsilon 
\leq \epsilon (M+\|T\|+1) .
\]
As $\epsilon>0$ was arbitrary, this proves $T_nK$ converges to $TK$ in norm. 

Next, using that $T_n,T$ are adjointable, we can also estimate, for any $\xi\in E$, 
\[
\| F_\epsilon (T_n-T) \xi \| 
= \| \sum_{j=1}^N \psi_j \la (T_n^*-T^*) \varphi_j | \xi \ra \| 
\leq \sum_{j=1}^N \|\psi_j\| \, \| (T_n^*-T^*) \varphi_j \| \, \|\xi\| .
\]
By assumption, $T_n^*$ also converges strongly to $T^*$, so for $n$ large enough we obtain that $\| F_\epsilon (T_n-T) \| < \epsilon$.
Then, proceeding as above, also $KT_n$ converges to $KT$ in norm. 
\end{proof}

\subsection{Interpolation}

The following results are based on \cite[Proposition A.1]{Les05}. 
We follow the adaptation to the case of operators on Hilbert $C^*$-modules as given in the proof of \cite[Lemma 7.7]{LM19}. 
\begin{prop}
\label{prop:interpolation}
Let $T$ be an invertible positive regular self-adjoint operator on $E$. 
Let $S$ be a densely defined symmetric operator on $E$ with $\Dom S \supset \Dom T$. 
Then the following statements hold:
\begin{enumerate}
\item 
$ST^{-1}$ is bounded and adjointable, and 
$T^{-1}S$ is densely defined and bounded and extends to an adjointable operator $\bar{T^{-1}S}$ with $(\bar{T^{-1}S})^* = ST^{-1}$. 
\item 
\label{item:adjoint}
The operator $T^{-1}ST$ is densely defined and its adjoint equals $(T^{-1}ST)^* = TST^{-1}$. 
\item 
\label{item:bdd_adj}
If $T^{-1}ST$ or $TST^{-1}$ is bounded and extends to an adjointable operator, then in fact both $T^{-1}ST$ and $TST^{-1}$ are bounded and extend to adjointable operators, and $\|\bar{TST^{-1}}\| = \|\bar{T^{-1}ST}\|$. 
\item 
\label{item:square_roots}
The operator $T^{-\frac12}ST^{-\frac12}$ is bounded and extends to an adjointable operator, and its norm satisfies the inequality $\| T^{-\frac12}ST^{-\frac12} \| \leq \|ST^{-1}\|$. 
\end{enumerate}
\end{prop}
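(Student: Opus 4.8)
The plan is to establish (1)--(3) by elementary operator bookkeeping together with the closed graph theorem for Banach spaces, and then to obtain (4) --- the only genuinely nontrivial claim --- by a complex interpolation argument of Hadamard three-lines type. For (1): since $T$ is invertible, positive, and regular self-adjoint, $T^{-1}\in\End_A(E)$ is positive with $\Ran T^{-1}=\Dom T\subseteq\Dom S$, so $ST^{-1}$ is everywhere defined. I would regard $T^{-1}$ as a map from $E$ into the Banach space $(\Dom\bar S,\|\cdot\|_{\bar S})$, where $\bar S$ is the closure of $S$ and $\|\cdot\|_{\bar S}$ its graph norm: its graph is closed (if $v_n\to v$, $T^{-1}v_n\to\psi$ and $\bar S T^{-1}v_n\to\eta$, then $\psi=T^{-1}v$ by boundedness of $T^{-1}$ and $\eta=\bar S\psi$ by closedness of $\bar S$), so by the Banach-space closed graph theorem this map is bounded; composing with $\bar S$ shows $ST^{-1}=\bar S T^{-1}$ is bounded on $E$. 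Since $S$ is symmetric and $T^{-1}$ self-adjoint, $\langle ST^{-1}v\mid w\rangle=\langle v\mid T^{-1}Sw\rangle$ for all $w\in\Dom S$, so the adjoint relation of $ST^{-1}$ is densely defined; a bounded, everywhere-defined operator whose adjoint relation is densely defined is adjointable (the adjoint relation is then bounded by $\|ST^{-1}\|$ and extends to all of $E$), and $T^{-1}S|_{\Dom S}$, being a restriction of the bounded $(ST^{-1})^*$, extends to $\bar{T^{-1}S}=(ST^{-1})^*$.

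For (2): $\Dom(T^{-1}ST)\supseteq\Dom T^2$ is dense; the inclusion $TST^{-1}\subseteq(T^{-1}ST)^*$ is a direct computation moving $T^{\pm1}$ (self-adjoint) and $S$ (symmetric) across the inner product. For the reverse inclusion, I would take $\eta\in\Dom((T^{-1}ST)^*)$ with $\zeta:=(T^{-1}ST)^*\eta$, test the defining identity against $\psi=T^{-1}v$ for arbitrary $v\in\Dom T$, and use $(ST^{-1})^*=\bar{T^{-1}S}$ from (1) to deduce $T^{-1}ST^{-1}\eta=T^{-2}\zeta$, i.e.\ $ST^{-1}\eta=T^{-1}\zeta\in\Dom T$; hence $\eta\in\Dom(TST^{-1})$ and $TST^{-1}\eta=\zeta$, giving $(T^{-1}ST)^*=TST^{-1}$. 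Statement (3) is then formal: if, say, $T^{-1}ST$ extends to $X\in\End_A(E)$, then $X^*\in\End_A(E)$ is contained in $(T^{-1}ST)^*=TST^{-1}$ and everywhere defined, so $TST^{-1}=X^*$ is adjointable with $\|\bar{TST^{-1}}\|=\|X^*\|=\|X\|=\|\bar{T^{-1}ST}\|$; the converse direction is symmetric.

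For (4): let $H:=\log T$, a regular self-adjoint operator with $\spec H\subseteq[\log c,\infty)$ (using positivity and invertibility of $T$), and write $T^z:=e^{zH}$, so that $T^{iy}$ is unitary for $y\in\R$. Fix $g_n\in C_c(\R)$ with $0\le g_n\le1$ and $g_n\equiv1$ on $[-n,n]$, and set $\mathcal{D}_0:=\bigcup_n\Ran g_n(H)$; this domain is dense, invariant under every $T^z$ and under the $S$-composition below, and on $\Ran g_n(H)$ each vector $v$ is an analytic vector for $H$, with $z\mapsto T^zv$ entire and $\|T^zv\|\le e^{n|\Re z|}\|v\|$. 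For $v,w\in\mathcal{D}_0$ I would form the $A$-valued function
\[
g(z):=\big\langle\, T^{-\bar z}w \;\big|\; (ST^{-1})\,T^{z}v \,\big\rangle ,
\]
which is analytic on the closed strip $\{0\le\Re z\le1\}$ and bounded there by the estimates above. On the edge $\Re z=0$, unitarity of $T^{iy}$ gives $\|g(iy)\|\le\|ST^{-1}\|\,\|v\|\,\|w\|$; on the edge $\Re z=1$, using $(ST^{-1})T|_{\Dom T}=S$, that $T^{iy}v\in\Dom T$, self-adjointness of $T^{-1}$, and part (1), one rewrites $g(1+iy)=\langle T^{iy}w\mid(\bar{T^{-1}S})\,T^{iy}v\rangle$, so again $\|g(1+iy)\|\le\|\bar{T^{-1}S}\|\,\|v\|\,\|w\|=\|ST^{-1}\|\,\|v\|\,\|w\|$. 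The Hadamard three-lines principle for bounded analytic Banach-space-valued functions then yields $\|g(\tfrac12)\|\le\|ST^{-1}\|\,\|v\|\,\|w\|$. Since $g(\tfrac12)=\langle T^{-1/2}w\mid ST^{-1/2}v\rangle=\langle w\mid T^{-1/2}ST^{-1/2}v\rangle$ (up to the $*$-operation on $A$) and $\mathcal{D}_0$ is dense, taking the supremum over $w$ gives $\|T^{-1/2}ST^{-1/2}v\|\le\|ST^{-1}\|\,\|v\|$ for all $v\in\mathcal{D}_0$; as $T^{-1/2}ST^{-1/2}$ is symmetric on $\mathcal{D}_0$, its closure is a self-adjoint adjointable operator with $\|\bar{T^{-1/2}ST^{-1/2}}\|\le\|ST^{-1}\|$, which is (4).

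The main obstacle is the interpolation step in (4): constructing a dense core of analytic vectors for the regular self-adjoint operator $H=\log T$ so that the complex powers $T^z$ act sensibly, verifying that $g$ is genuinely analytic and bounded on the strip, and invoking the three-lines theorem in a form valid for $A$-valued analytic functions --- this is where all the Hilbert $C^*$-module care is concentrated, whereas (1)--(3) amount to bookkeeping around closed graphs and adjoints.
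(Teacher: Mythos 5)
Your proposal is correct, and parts (1)--(3) are essentially the argument the paper gives: closed graph for boundedness of $ST^{-1}$ in (1), the density/core argument for the adjoint identity in (2), and the formal passage via $(\cdot)^*$ in (3). The interesting divergence is in (4). The paper works directly with the operator family $P_z := T^{-z}ST^{-1+z}$ on the fixed domain $\Dom T$ for $0\le\Re z\le 1$; the crucial observation is that both exponents $-z$ and $-1+z$ have nonpositive real part on the strip, so the relevant powers of $T$ are \emph{bounded} ($T$ being invertible and positive), and $P_z\xi$ makes sense for every $\xi\in\Dom T$ with no need for analytic vectors. One then forms $f(z)=\la P_z\xi\,|\,\eta\ra$ for $\xi\in\Dom T$, $\eta\in E$, composes with a bounded linear functional on $A$ to reduce to scalar Hadamard three-lines, and takes suprema. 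Your version instead sets $H=\log T$, builds a dense invariant core of analytic vectors $\mathcal{D}_0=\bigcup_n\Ran g_n(H)$, and applies a Banach-space-valued three-lines theorem to $g(z)=\la T^{-\bar z}w\,|\,(ST^{-1})T^z v\ra$. This is correct (and the edge estimates using $T^{iy}$ unitarity and $\|\bar{T^{-1}S}\|=\|ST^{-1}\|$ check out), but the analytic-vector machinery is extra weight that the paper's choice of exponents renders unnecessary: by never leaving the band where $T$-powers are bounded, the paper avoids having to control $T^z$ for $\Re z>0$ at all. Both approaches are valid, and yours carries over verbatim to more general interpolation situations where one cannot so cleanly restrict the exponent; the paper's is leaner for the statement at hand. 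Two very minor slips worth fixing if you write this up: the bound $\|T^z v\|\le e^{n|\Re z|}\|v\|$ for $v\in\Ran g_n(H)$ should use the radius of $\supp g_n$ (which is strictly larger than $n$), and the detour through $T^{-1}ST^{-1}\eta=T^{-2}\zeta$ in your sketch of (2) can be shortened by testing directly as the paper does.
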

\begin{proof}
\begin{enumerate}
\item 
Since $S$ is closable and $\Ran T^{-1} \subset \Dom S$, it is a consequence of the closed graph theorem that $ST^{-1}$ is bounded. 
For all $\psi\in\Dom S$ and $\xi\in E$ we have $\la T^{-1}S\psi|\xi\ra = \la\psi|ST^{-1}\xi\ra$, which shows that $T^{-1}S$ has a densely defined adjoint and is therefore closable. Moreover, on the dense subset $\Dom S$, $T^{-1}S$ agrees with the adjoint $(ST^{-1})^*$. Thus $ST^{-1}$ is bounded and has a densely defined adjoint, which implies that $ST^{-1}$ is in fact adjointable with $(ST^{-1})^* = \bar{T^{-1}S}$. 
\item 
Since $T$ is regular and self-adjoint, $\Dom T^2$ is dense in $E$, so $\Dom(T^{-1}ST) = \Dom(ST) \supset \Dom(T^2)$ is also dense. 
Let $\xi\in\Dom(TST^{-1})$ and $\eta\in\Dom(T^{-1}ST)$. Then $T^{-1}\xi\in\Dom S$ with $ST^{-1}\xi\in\Dom T$, and $\eta\in\Dom T$ with $T\eta\in\Dom S$. 
Consequently, 
\[
\la TST^{-1}\xi | \eta \ra 
= \la ST^{-1}\xi | T\eta \ra 
= \la T^{-1}\xi | ST\eta \ra 
= \la \xi | T^{-1}ST\eta \ra ,
\]
so $\xi \in \Dom(T^{-1}ST)^*$ and $TST^{-1} \subset (T^{-1}ST)^*$. 
For the converse, consider $\xi \in \Dom(T^{-1}ST)^*$ and $\eta \in \Dom(T^2) \subset \Dom(T^{-1}ST)$. 
Then 
\[
\la (T^{-1}ST)^*\xi | \eta \ra 
= \la \xi | T^{-1}ST\eta \ra 
= \la T^{-1}\xi | ST\eta \ra 
= \la ST^{-1}\xi | T\eta \ra .
\]
Since $\Dom T^2$ is a core for $T$, the above equality continues to hold for all $\eta\in\Dom T$. Thus $ST^{-1}\xi \in \Dom T^* = \Dom T$ and $TST^{-1}\xi = T^*ST^{-1}\xi = (T^{-1}ST)^*\xi$, which shows $(T^{-1}ST)^* \subset TST^{-1}$. 
\item 
Assuming $\bar{T^{-1}ST}$ is adjointable, it follows from \ref{item:adjoint} that $TST^{-1} = (\bar{T^{-1}ST})^*$ is also bounded and adjointable. 
Similarly, assuming $\bar{TST^{-1}}$ is adjointable, it follows from \ref{item:adjoint} that $\bar{T^{-1}ST} = (\bar{TST^{-1}})^*$ is also bounded and adjointable. 
\item 
For simplicity, we assume that $\|T^{-1}\| \leq 1$. 
For $\xi,\eta\in\Dom T$ and $0\leq\Re z\leq1$, consider the operator $P_z := T^{-z} S T^{-1+z}$ and the function 
\[
f(z) := \la P_z \xi | \eta \ra = \la T^{-z} S T^{-1+z} \xi | \eta \ra .
\]
$f$ is weakly holomorphic on the strip $0<\Re z<1$. Moreover, from the estimate 
\[
\lla P_z \xi \rra 
\leq \lla ST^{-1+z} \xi \rra 
\leq \|ST^{-1}\|^2 \lla T^z \xi \rra 
\leq \|ST^{-1}\|^2 \lla T \xi \rra 
\]
we obtain that $\|f(z)\| \leq \|ST^{-1}\| \, \|T\xi\| \, \|\eta\|$, so $f$ is a bounded function. 

Now consider a bounded linear functional $\varphi \colon A \to \C$ with $\|\varphi\|\leq1$. 
Since the function $\varphi \circ f$ is holomorphic and bounded on the strip $0\leq\Re z\leq1$, it follows from the Hadamard 3-line Theorem that $\varphi\circ f$ is bounded by its suprema on the boundary $\Re z \in \{0,1\}$. 
On this boundary $\Re z \in \{0,1\}$ we have $\|P_z\| = \|P_0\| = \|ST^{-1}\|$, so from the Hadamard 3-line Theorem we obtain for all $0\leq\Re z\leq1$ that 
\begin{align*}
\big| \varphi \big( f(z) \big) \big| 
&= \big| \varphi \big( \la P_z\xi | \eta \ra \big) \big| 
\leq \sup_{w\in\C : \Re w=0,1} \big| \varphi \big( \la P_w\xi | \eta \ra \big) \big| \\
&\leq \sup_{w\in\C : \Re w=0,1} \big\| \la P_w\xi | \eta \ra \big\| 
\leq \|ST^{-1}\| \, \|\xi\| \, \|\eta\| .
\end{align*}
Since there exists a bounded linear functional $\varphi$ with $\big| \varphi \big( f(z) \big) \big| = \| f(z) \|$, it follows that also $\big\| f(z) \big\| \leq \|ST^{-1}\| \, \|\xi\| \, \|\eta\|$. 
Taking the supremum over all $\xi$ and $\eta$ with $\|\xi\|=\|\eta\|=1$, we conclude that $P_z$ is bounded and extends to an adjointable operator $\bar{P_z}$ satisfying $\|\bar{P_z}\| \leq \|ST^{-1}\|$. 
\qedhere
\end{enumerate}
\end{proof}

\begin{coro}
\label{coro:interpolation}
Let $T$ be an invertible positive regular self-adjoint operator on $E$. 
Let $F=F^*\in\mL_A(E)$ and assume that the operator $T^{-\frac12} F T^{\frac12}$ is bounded and extends to an adjointable operator. 
Then 
\[
\|F\| \leq \| \bar{T^{-\frac12} F T^{\frac12}} \| = \| \bar{T^{\frac12} F T^{-\frac12}} \| . 
\]
\end{coro}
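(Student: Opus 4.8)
The plan is to deduce both parts of the statement from \cref{prop:interpolation}, applied with $T$ replaced by its positive square root $T^{\frac12}$ (which is again an invertible positive regular self-adjoint operator) and with $S:=F$ (which is symmetric, with $\Dom S=E\supseteq\Dom T$, since $F=F^*\in\End_A(E)$). The equality is then immediate: by \cref{prop:interpolation}\ref{item:bdd_adj} both $T^{-\frac12}FT^{\frac12}$ and $T^{\frac12}FT^{-\frac12}$ extend to adjointable operators $a:=\bar{T^{-\frac12}FT^{\frac12}}$ and $b:=\bar{T^{\frac12}FT^{-\frac12}}$ with $\|b\|=\|a\|$. It remains to prove the inequality $\|F\|\leq\|a\|$.

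For this I would run a Hadamard three-line argument, of exactly the kind used in the proof of \cref{prop:interpolation}\ref{item:square_roots}, built around the observation that $F$ is the value at the midpoint of an analytic family interpolating between unitary conjugates of $a$ and of $b$. Concretely, fix $\xi,\eta\in\Dom T$ and consider, on the closed strip $0\leq\Re z\leq1$, the function
\[
f(z):=\big\la F\,T^{\frac12-z}\xi \,\big|\, T^{\bar z-\frac12}\eta \big\ra .
\]
For $\Re z\leq\frac12$ the power $T^{\frac12-z}$ is applied to $\xi\in\Dom T\subseteq\Dom T^{\frac12-\Re z}$, and for $\Re z\geq\frac12$ it is a bounded operator (symmetrically for $T^{\bar z-\frac12}$ applied to $\eta$), so $f$ is well defined on the strip; using the functional calculus for $T$ one checks that $f$ is weakly holomorphic in the open strip and, by log-convexity of $t\mapsto\|T^{t}\xi\|$ on $[-\tfrac12,\tfrac12]$, bounded on the closed strip. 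One has $f(\tfrac12)=\la F\xi|\eta\ra$. On the boundary line $\Re z=0$, sliding the powers of $T$ across the inner product (using that $T^{iy}$ is unitary, commutes with $T^{\pm\frac12}$, and preserves $\Dom T$, and that $T^{-\frac12}FT^{\frac12}$ restricts to $a$ on $\Dom T^{\frac12}$) gives $f(iy)=\la T^{iy}a\,T^{-iy}\xi\,|\,\eta\ra$, hence $\|f(iy)\|\leq\|a\|\,\|\xi\|\,\|\eta\|$; similarly, moving $F$ to the right via $F=F^*$ and again sliding the powers of $T$, the boundary line $\Re z=1$ gives $f(1+iy)=\la \xi\,|\,T^{iy}a\,T^{-iy}\eta\ra$, hence again $\|f(1+iy)\|\leq\|a\|\,\|\xi\|\,\|\eta\|$. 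Applying the Hadamard three-line theorem to $\varphi\circ f$ for every bounded linear functional $\varphi\colon A\to\C$ with $\|\varphi\|\leq1$ — verbatim as in the proof of \cref{prop:interpolation}\ref{item:square_roots} — and taking the supremum over such $\varphi$ yields $\|f(z)\|\leq\|a\|\,\|\xi\|\,\|\eta\|$ throughout the strip; in particular $\|\la F\xi|\eta\ra\|=\|f(\tfrac12)\|\leq\|a\|\,\|\xi\|\,\|\eta\|$ for all $\xi,\eta\in\Dom T$. Since $\Dom T$ is dense in $E$, this gives $\|F\|\leq\|a\|$, completing the proof.

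The main obstacle I anticipate is the domain bookkeeping for the unbounded powers $T^{\pm z}$: since $F$ does not preserve the domains of these powers, one must carefully verify that $f$ really is a well-defined, bounded, weakly holomorphic function on the closed strip, and — the most delicate point — justify the rewritings of the boundary values $f(iy)$ and $f(1+iy)$ in terms of the bounded operator $a$ by sliding $T^{\pm\frac12}$ and $T^{\pm iy}$ across the $A$-valued inner product (using $F=F^*$ and that $T^{-\frac12}FT^{\frac12}$ restricts to $a$ on $\Dom T^{\frac12}$). Everything else is a routine repetition of the three-line argument already carried out in the proof of \cref{prop:interpolation}.
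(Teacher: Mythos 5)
Your proposal is correct, and for the equality part it is identical to the paper's (apply \cref{prop:interpolation}\ref{item:bdd_adj} with $T$ replaced by $T^{\frac12}$ and $S=F$). For the inequality $\|F\|\leq\|\bar{T^{-\frac12}FT^{\frac12}}\|$, however, the paper does not redo a three-line argument: it sets $S:=T^{\frac12}FT^{\frac12}$, observes that $ST^{-1}=T^{\frac12}FT^{-\frac12}$ is bounded (so $\Dom T\subset\Dom S$) and that $T^{-\frac12}ST^{-\frac12}=F$, and then invokes \cref{prop:interpolation}\ref{item:square_roots} directly to get $\|F\|=\|T^{-\frac12}ST^{-\frac12}\|\leq\|ST^{-1}\|=\|T^{\frac12}FT^{-\frac12}\|$. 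Your interpolating family $f(z)=\la F\,T^{\frac12-z}\xi\mvert T^{\bar z-\frac12}\eta\ra$ is (after the change of variables $z\mapsto z-\tfrac12$) precisely the family $\la T^{-z}ST^{-1+z}\xi\mvert\eta\ra$ from the proof of \ref{item:square_roots} with this $S$, so you are in effect re-proving that item rather than citing it. Both routes lead to the same estimate, but the paper's reduction is more economical and sidesteps the domain and holomorphy bookkeeping you flag as the delicate point — all of that was already absorbed into the proof of \cref{prop:interpolation}\ref{item:square_roots}, and the only new thing to check is that $ST^{-1}$ is bounded, which you already have from \ref{item:bdd_adj}.
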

\begin{proof}
We note that $T^{\frac12}$ is also an invertible positive regular selfadjoint operator. 
Certainly $\Dom F = E \supset \Dom T^{\frac12}$, and since $T^{-\frac12} F T^{\frac12}$ is bounded (and extends to an adjointable operator), we know from \cref{prop:interpolation}.\ref{item:bdd_adj} that also $T^{\frac12} F T^{-\frac12}$ is bounded (and extends to an adjointable operator) and $\|T^{\frac12} F T^{-\frac12}\| = \|T^{-\frac12} F T^{\frac12}\|$. 

Now consider the symmetric operator $S := T^{\frac12} F T^{\frac12}$. 
We have just seen that $ST^{-1} = T^{\frac12} F T^{-\frac12}$ is bounded (i.e., $\Dom T \subset \Dom S$). 
Hence by \cref{prop:interpolation}.\ref{item:square_roots} we find that 
\[
\|F\| 
= \| T^{-\frac12} S T^{-\frac12} \| 
\stackrel{\ref{prop:interpolation}.\ref{item:square_roots}}{\leq} \|ST^{-1}\| 
= \| T^{\frac12} F T^{-\frac12} \| 
\stackrel{\ref{prop:interpolation}.\ref{item:bdd_adj}}{=} \| T^{-\frac12} F T^{\frac12} \| .
\qedhere 
\]
\end{proof}

\subsection{Convergence of unbounded operators}

The following result generalises one of the statements in \cite[Proposition 2.2]{Les05}, regarding convergence of unbounded operators with respect to certain topologies, to the context of regular operators on Hilbert $C^*$-modules. 
\begin{prop}
\label{prop:Riesz_convergence}
Let $\D$ be a regular selfadjoint operator on $E$. 
We view the domain $W := \Dom\D \subset E$ as a Hilbert $A$-module with the graph norm. 
Let $T$ and $T_n$ (for all $n\in\N$) be regular selfadjoint operators on $E$ with $\Dom T = \Dom \D$ and $\Dom T_n = \Dom \D$ for all $n\in\N$. 

If $(T_n-T)(\D+i)^{-1}$ converges in norm to $0$ as $n\to\infty$, then also $F_{T_n}-F_T$ converges in norm to $0$ as $n\to\infty$. 
\end{prop}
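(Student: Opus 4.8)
The plan is to work from the integral representation of the bounded transform,
\[
F_S = \frac{2}{\pi}\int_0^\infty S\big(1+\mu^2+S^2\big)^{-1}\,d\mu ,
\]
valid for any regular self-adjoint $S$, and to compare the integrands for $S=T_n$ and $S=T$. First I would carry out some preliminary reductions. Since $\Dom T_n = \Dom\D = \Dom T$, the closed operators $(T\pm i)(\D+i)^{-1}$ and $(\D+i)(T\pm i)^{-1}$ are everywhere defined, hence bounded by the closed graph theorem; writing
\[
(T_n-T)(T+i)^{-1} = \big[(T_n-T)(\D+i)^{-1}\big]\big[(\D+i)(T+i)^{-1}\big]
\]
then shows that $y_n := \big\|(T_n-T)(T+i)^{-1}\big\| \to 0$. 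Consequently $\|(T_n\pm i)(T\pm i)^{-1}\| = \|1 + (T_n-T)(T\pm i)^{-1}\| \le 1+y_n$, and for $n$ large a Neumann series gives $\|(T\pm i)(T_n\pm i)^{-1}\|\le 2$; in particular $(T_n\pm i)^{-1}\to(T\pm i)^{-1}$ in norm. From here on everything will be estimated in terms of $y_n$.

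Writing $g_S(\mu) := S(1+\mu^2+S^2)^{-1}$, we have $F_{T_n}-F_T = \frac{2}{\pi}\int_0^\infty\big[g_{T_n}(\mu)-g_T(\mu)\big]\,d\mu$. Using the resolvent identity together with $T^2-T_n^2 = T_n(T-T_n)+(T-T_n)T$, the preliminary bounds, and elementary functional calculus — and invoking the interpolation estimates of \cref{prop:interpolation,coro:interpolation} to control the mixed expressions (and to justify the manipulation of operator domains when splitting $T^2-T_n^2$ across the resolvents) — one obtains a pointwise bound
\[
\big\|g_{T_n}(\mu)-g_T(\mu)\big\| \le \frac{C\,y_n}{\sqrt{1+\mu^2}}, \qquad \mu\in[0,\infty),
\]
with $C$ independent of $n$ and $\mu$. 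This already disposes of $\int_0^1$, which is $\le C y_n\to 0$.

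The crux — and the step I expect to be the main obstacle — is the tail $\int_1^\infty$: the pointwise bound just obtained is \emph{not} integrable there, reflecting the fact that the defining integral for $F$ converges only conditionally in norm (equivalently, the bounded transform is not given by a $C_0$-function, so norm-resolvent convergence alone does not suffice and genuine cancellation in $\mu$ must be exploited). To this end I would pass to the primitive: since $\int g_S(\mu)\,d\mu = F_S\arctan\!\big(\mu(1+S^2)^{-1/2}\big)$, for each finite $R>1$
\[
\int_1^R\big[g_{T_n}(\mu)-g_T(\mu)\big]\,d\mu = \big(h_R(T_n)-h_R(T)\big) - \big(h_1(T_n)-h_1(T)\big), \qquad h_R(x):=\frac{x}{\sqrt{1+x^2}}\,\arctan\frac{R}{\sqrt{1+x^2}} .
\]
For each fixed $R$ one checks $h_R\in C_0(\R)$, so by continuous functional calculus and the norm-resolvent convergence above, $h_R(T_n)-h_R(T)\to 0$ (and likewise for $h_1$). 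The remaining delicate point is to make this convergence uniform in $R$, i.e.\ to bound $\sup_{R\ge 1}\|h_R(T_n)-h_R(T)\|$ by a quantity tending to $0$ with $n$; this is exactly where the weighted (interpolation) estimates of the Appendix do the real work, since they furnish bounds for $h_R(T_n)-h_R(T)$ whose constants do not deteriorate as $R\to\infty$ (morally, the content is that $(1+T_n^2)^{-1/2}(1+T^2)^{1/2}\to 1$ in norm). Letting $R\to\infty$ then yields $\|F_{T_n}-F_T\|\le C'y_n\to 0$. I anticipate that the most tedious part of the write-up will be the domain bookkeeping in the mixed estimates, which is precisely why \cref{prop:interpolation,coro:interpolation,lem:s-limit_times_cpt} were isolated beforehand.
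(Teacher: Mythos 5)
Your preliminary reductions are sound and in fact coincide with the paper's opening moves: the closed graph theorem gives $y_n := \|(T_n-T)(T+i)^{-1}\| \to 0$, and the Neumann series controls $(T\pm i)(T_n\pm i)^{-1}$. You also correctly identify the genuine obstruction: the pointwise bound $\|g_{T_n}(\mu) - g_T(\mu)\| \le C\,y_n\,(1+\mu^2)^{-1/2}$ is essentially sharp and fails to be integrable on $[1,\infty)$, so one cannot simply pass $y_n\to 0$ through the integral representation of $F$.

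The problem is that the step you propose to close this gap --- establishing $\sup_{R\ge 1}\|h_R(T_n)-h_R(T)\|\to 0$ --- is never actually carried out, and on inspection it is essentially as hard as the proposition itself. Writing $h_R(S) = F_S\,\phi_R(S)$ with $\phi_R(x) = \arctan\big(R(1+x^2)^{-1/2}\big)$, any natural decomposition such as $h_R(T_n)-h_R(T) = (F_{T_n}-F_T)\phi_R(T_n) + F_T\big(\phi_R(T_n)-\phi_R(T)\big)$ re-introduces $F_{T_n}-F_T$ on the right, so the estimate is circular; and since $h_R(T_n)-h_R(T)\to F_{T_n}-F_T$ strongly as $R\to\infty$, a uniform-in-$R$ bound is (up to the factor $2/\pi$) at least as strong as the conclusion you are after, so the reduction buys no leverage. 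You gesture at the right ingredient --- ``morally'' $(1+T_n^2)^{-1/2}(1+T^2)^{1/2}\to 1$ in norm --- but you neither prove it nor show how it would give uniformity in $R$, and that is exactly where the real work lies. For comparison, the paper (following Lesch) avoids the integral representation of $F$ entirely: from $\|(T-T_n)(T\pm i)^{-1}\|\le\epsilon$ it derives the operator inequalities $\frac{1}{(1+\epsilon)^2}(1+T^2)^{-1}\le(1+T_n^2)^{-1}\le\frac{1}{(1-\epsilon)^2}(1+T^2)^{-1}$, takes square roots to obtain $\big\|(1+T^2)^{1/4}(1+T_n^2)^{-1/2}(1+T^2)^{1/4}-1\big\|\le\tfrac{\epsilon}{1-\epsilon}$, and then uses the interpolation results \cref{prop:interpolation} and \cref{coro:interpolation} to dominate $\|F_T-F_{T_n}\|$ by a weighted expression that is estimated term by term, yielding $\|F_T-F_{T_n}\|\le 4\epsilon$. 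To rescue your integral-based route you would need to prove the uniform-in-$R$ bound from scratch, which in effect amounts to redoing this direct argument.
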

\begin{proof}
The proof is similar to the Hilbert space proof given in \cite[Proposition 2.2]{Les05}. For completeness, we include the details here. 

We pick $0<\epsilon<\frac12$. 
Since $\Dom\D = \Dom T$, we know that $(\D+i)(T+i)^{-1}$ is bounded, so there exists an $N_0\in\N$ such that for all $n\geq N_0$ we have 
\[
\big\| (T-T_n) (T+i)^{-1} \big\| \leq \epsilon 
\quad\text{and}\quad 
\big\| (T+i)^{-1} (T-T_n) \big\| \leq \epsilon .
\]
Consequently, the operator $(T+i)^{-1}(T_n+i) = 1 - (T+i)^{-1}(T-T_n)$ is invertible and from the Neumann series we obtain the norm bound 
\[
\big\| (T_n+i)^{-1} (T+i) \big\| \leq \sum_{k=0}^\infty \epsilon^k = \frac{1}{1-\epsilon} < 2 .
\]
Thus for all $\psi\in E$ we have (in the $C^*$-algebra $A$) the inequality 
\[
\big\lla (T_n+i)^{-1} \psi \big\rra 
\leq \frac{1}{(1-\epsilon)^2} \big\lla (T+i)^{-1} \psi \big\rra ,
\]
or equivalently we have the operator inequality 
\[
(1+T_n^2)^{-1} \leq \frac{1}{(1-\epsilon)^2} (1+T^2)^{-1} .
\]
Furthermore, from the estimate $\big\| (T+i)^{-1} (T_n+i) \big\| \leq 1+\epsilon$ we similarly obtain the operator inequality 
\[
(1+T^2)^{-1} \leq (1+\epsilon)^2 (1+T_n^2)^{-1} .
\]
Thus, taking square roots, we have 
\[
\frac{1}{1+\epsilon} (1+T^2)^{-\frac12} \leq (1+T_n^2)^{-\frac12} \leq \frac{1}{1-\epsilon} (1+T^2)^{-\frac12} .
\]
Subtracting $(1+T^2)^{-\frac12}$ yields 
\[
-\frac{\epsilon}{1+\epsilon} (1+T^2)^{-\frac12} \leq (1+T_n^2)^{-\frac12} - (1+T^2)^{-\frac12} \leq \frac{\epsilon}{1-\epsilon} (1+T^2)^{-\frac12} ,
\]
from which we obtain the norm estimate 
\[
\big\| (1+T^2)^{\frac14} (1+T_n^2)^{-\frac12} (1+T^2)^{\frac14} - 1 \big\| \leq \frac{\epsilon}{1-\epsilon} .
\]
In particular, $(1+T^2)^{\frac14} (1+T_n^2)^{-\frac12} (1+T^2)^{\frac14}$ is bounded. Since also $(1+T^2)^{-\frac14} T_n (1+T^2)^{-\frac14}$ is bounded by \cref{prop:interpolation}.\ref{item:square_roots}, the estimate 
\[
\big\| (1+T^2)^{-\frac14} F_{T_n} (1+T^2)^{\frac14} \big\| 
\leq \big\| (1+T^2)^{-\frac14} T_n (1+T^2)^{-\frac14} \big\| \; \big\| (1+T^2)^{\frac14} (1+T_n^2)^{-\frac12} (1+T^2)^{\frac14} \big\| 
\]
shows that $(1+T^2)^{-\frac14} F_{T_n} (1+T^2)^{\frac14}$ is bounded. Thus we can use \cref{coro:interpolation} to estimate the difference of the bounded transforms of $T$ and $T_n$ (for all $n\geq N_0$):
\begin{align*}
\big\| F_T - F_{T_n} \big\| 
&\mathmakebox[2.5em][c]{\stackrel{\ref{coro:interpolation}}{\leq}} \big\| (1+T^2)^{-\frac14} \big( F_T - F_{T_n} \big) (1+T^2)^{\frac14} \big\| \\
&\mathmakebox[2.5em][c]{\leq} \big\| (1+T^2)^{-\frac14} \big( T - T_n \big) (1+T^2)^{-\frac14} \big\| \\
&\qquad\quad+ \big\| (1+T^2)^{-\frac14} T_n \big( (1+T^2)^{-\frac12} - (1+T_n^2)^{-\frac12} \big) (1+T^2)^{\frac14} \big\| \\
&\mathmakebox[2.5em][c]{\stackrel{\ref{prop:interpolation}.\ref{item:square_roots}}{\leq}} \big\| \big( T - T_n \big) (1+T^2)^{-\frac12} \big\| \\
&\qquad\quad+ \big\| (1+T^2)^{-\frac14} T_n (1+T^2)^{-\frac14} \big\| \; \big\| 1 - (1+T^2)^{\frac14} (1+T_n^2)^{-\frac12} (1+T^2)^{\frac14} \big\| \\
&\mathmakebox[2.5em][c]{\stackrel{\ref{prop:interpolation}.\ref{item:square_roots}}{\leq}} \epsilon + \big\| T_n (1+T^2)^{-\frac12} \big\| \frac{\epsilon}{1-\epsilon} \\
&\mathmakebox[2.5em][c]{\leq} \epsilon + (1+\epsilon) \frac{\epsilon}{1-\epsilon} 
= \epsilon \Big( 1 + \frac{1+\epsilon}{1-\epsilon} \Big) 
\leq 4\epsilon ,
\end{align*}
where we used that $\epsilon<\frac12$. 
We note that this inequality still holds for all $n\geq N_0$. Since $0<\epsilon<\frac12$ was arbitrary, this proves that $\big\| F_T - F_{T_n} \big\| \to 0$ as $n\to\infty$. 
\end{proof}

\subsection{Relatively compact perturbations}
\label{app:rel-cpt}

In this subsection we study relatively compact perturbations of regular self-adjoint operators. \cref{prop:rel_cpt_rel_bdd_0,prop:rel_cpt_pert_reg_sa} below are well-known facts for operators on Hilbert spaces, but appear not to be present in the literature on Hilbert $C^*$-modules. 

\begin{defn}
\label{defn:rel-cpt}
Let $T$ be a regular self-adjoint operator on $E$. 
A densely defined operator $R$ on $E$ is called \emph{relatively $T$-compact} if $\Dom(T) \subset \Dom(R)$ and $R(T\pm i)^{-1}$ is compact. 
\end{defn}
The assumption $\Dom(T) \subset \Dom(R)$ implies that $R$ is also relatively $T$-bounded. 
In fact, relative $T$-compactness implies that the relative $T$-bound can be chosen to be arbitrarily small. This is a well-known fact for operators on Hilbert spaces; we show next that this fact remains true on Hilbert $C^*$-modules, by adapting the proof of \cite[Theorem 14.2]{Hislop-Sigal12}. 

\begin{prop}
\label{prop:rel_cpt_rel_bdd_0}
Let $T$ be a regular self-adjoint operator on $E$, and let $R$ be relatively $T$-compact. 
Then for all $\epsilon>0$ there exists $C_\epsilon\geq0$ such that for all $\psi\in\Dom(T)$ we have 
\[
\| R\psi \| \leq \epsilon \|T\psi\| + C_\epsilon \|\psi\| .
\]
\end{prop}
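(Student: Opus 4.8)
The plan is to reduce the statement to the claim that $\big\| R(T+in)^{-1} \big\| \to 0$ as $n \to \infty$. Granting this, the desired estimate is immediate: given $\epsilon > 0$, fix $n \in \N$ large enough that $\big\| R(T+in)^{-1} \big\| \leq \epsilon$, and note that $T+in$ is boundedly invertible because $T/n$ is again regular self-adjoint and $T + in = n\big(\tfrac{T}{n} + i\big)$. Then for any $\psi \in \Dom(T)$ we may write $\psi = (T+in)^{-1}(T+in)\psi$, whence
\[
\|R\psi\| = \big\| R(T+in)^{-1}(T+in)\psi \big\| \leq \big\| R(T+in)^{-1} \big\| \cdot \big\| (T+in)\psi \big\| \leq \epsilon\big( \|T\psi\| + n\|\psi\| \big) ,
\]
so $C_\epsilon := \epsilon n$ works.

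To establish $\big\| R(T+in)^{-1} \big\| \to 0$, I would use the factorisation
\[
R(T+in)^{-1} = \big[ R(T+i)^{-1} \big]\, G_n , \qquad G_n := (T+i)(T+in)^{-1} = 1 + i(1-n)(T+in)^{-1} ,
\]
which is valid because $(T+in)^{-1}$ maps $E$ into $\Dom(T) \subset \Dom(R)$ and $(T+i)^{-1}(T+i)$ is the identity on $\Dom(T)$. The first factor $R(T+i)^{-1}$ is compact since $R$ is relatively $T$-compact, and the operators $G_n \in \End_A(E)$ are uniformly bounded (by $2$, using $\big\| (T+in)^{-1} \big\| \leq 1/n$). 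Hence, by \cref{lem:s-limit_times_cpt}, it suffices to show that $G_n \to 0$ $*$-strongly; then $R(T+in)^{-1} = \big[ R(T+i)^{-1} \big] G_n \to \big[ R(T+i)^{-1} \big]\cdot 0 = 0$ in norm.

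For the $*$-strong convergence of $G_n$, write $G_n = 1 + i\,\tfrac{1-n}{n}\big(\tfrac{T}{n}+i\big)^{-1}$. Using the identity $\big(\tfrac{T}{n}+i\big)^{-1}\psi + i\psi = \tfrac{i}{n}\big(\tfrac{T}{n}+i\big)^{-1}T\psi$, whose right-hand side has norm at most $\tfrac{1}{n}\|T\psi\| \to 0$, one sees that $\big(\tfrac{T}{n}+i\big)^{-1}\psi \to -i\psi$ for every $\psi \in \Dom(T)$; this extends to all $\psi \in E$ by density of $\Dom(T)$ and the uniform bound $\big\| \big(\tfrac{T}{n}+i\big)^{-1} \big\| \leq 1$. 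Since $i\,\tfrac{1-n}{n} \to -i$, it follows that $G_n\psi \to \psi + (-i)(-i\psi) = \psi - \psi = 0$ for all $\psi \in E$; the same computation applied to $G_n^* = 1 + i(n-1)(T-in)^{-1}$ gives $G_n^*\psi \to 0$, so $G_n \to 0$ $*$-strongly. I expect this last step to be the only real obstacle: over a Hilbert space it would follow from the spectral theorem and dominated convergence, whereas on a general Hilbert $C^*$-module I would argue directly via the resolvent identity and a density argument as above. (One should also note at the outset that $(T+in)^{-1}$ is adjointable with $\big( (T+in)^{-1} \big)^* = (T-in)^{-1}$, a consequence of the regularity and self-adjointness of $T$.)
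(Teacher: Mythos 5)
Your proof is correct and follows essentially the same route as the paper: factor $R(T\pm in)^{-1}$ as the compact operator $R(T\pm i)^{-1}$ times $(T\pm i)(T\pm in)^{-1}$, show the latter tends $*$-strongly to zero, and invoke \cref{lem:s-limit_times_cpt} to get norm convergence, which immediately yields the estimate with $C_\epsilon = \epsilon n$. The only difference is that the paper cites \cite[Lemma 7.2]{KL12} for the $*$-strong convergence of $(T\pm i)(T\pm in)^{-1} \to 0$, whereas you verify it directly via the resolvent identity and a density argument — a harmless and self-contained substitution.
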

\begin{proof}
We note that the operators $(T-i)(T-in)^{-1}$ converge $*$-strongly to $0$ as $n\to\infty$ (this follows e.g.\ from \cite[Lemma 7.2]{KL12}). 
We can write 
\[
R (T-in)^{-1} = R(T-i)^{-1} (T-i)(T-in)^{-1} .
\]
Since $R(T-i)^{-1}$ is compact and $(T-i)(T-in)^{-1} \xrightarrow{*s} 0$, the operator $R (T-in)^{-1}$ converges to $0$ in norm by \cref{lem:s-limit_times_cpt}. 
Thus, given any $\epsilon>0$, we can choose $n$ large enough such that $\|R (T-in)^{-1}\| < \epsilon$. 
Then for any $\psi\in\Dom(T)$ we have 
\[
\|R\psi\| 
\leq \|R (T-in)^{-1}\| \, \|(T-in)\psi\| 
\leq \epsilon \|(T-in)\psi\| 
\leq \epsilon \|T\psi\| + \epsilon n \|\psi\| ,
\]
where $C_\epsilon := \epsilon n$ is independent of $\psi$. 
\end{proof}

\begin{prop}
\label{prop:rel_cpt_pert_reg_sa}
Let $T$ be a regular self-adjoint operator on $E$, and let $R$ be a symmetric operator on $E$ which is relatively $T$-compact. 
Then $T+R$ is also regular and self-adjoint on $\Dom(T+R)=\Dom(T)$. 
\end{prop}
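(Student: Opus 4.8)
The plan is to run the Kato--Rellich perturbation argument, adapted to Hilbert $C^*$-modules, with \cref{prop:rel_cpt_rel_bdd_0} supplying a relative $T$-bound strictly less than $1$. First I would observe that $T+R$, with its natural domain $\Dom(T+R)=\Dom(T)\cap\Dom(R)=\Dom(T)$, is densely defined (since $T$ is) and symmetric: this uses that $T$ and $R$ are both symmetric and that $\Dom(T)\subset\Dom(R)$, both of which are built into the hypotheses. Applying \cref{prop:rel_cpt_rel_bdd_0} with $\epsilon=\tfrac12$ gives a constant $C\geq0$ such that $\|R\psi\|\leq\tfrac12\|T\psi\|+C\|\psi\|$ for all $\psi\in\Dom(T)$.

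The main step is to show that $T+R\pm i\mu$ is surjective for $\mu$ large enough. For any $\mu>0$, the identity $\lla(T\pm i\mu)\psi\rra=\lla T\psi\rra+\mu^2\lla\psi\rra$ (valid by the functional calculus of the regular self-adjoint operator $T$) yields $\|(T\pm i\mu)^{-1}\|\leq\mu^{-1}$ and $\|T(T\pm i\mu)^{-1}\|\leq1$; combining these with the relative bound above gives, for all $\xi\in E$,
\[
\big\|R(T\pm i\mu)^{-1}\xi\big\| \leq \tfrac12\big\|T(T\pm i\mu)^{-1}\xi\big\| + C\big\|(T\pm i\mu)^{-1}\xi\big\| \leq \Big(\tfrac12+\tfrac{C}{\mu}\Big)\|\xi\| .
\]
Choosing $\mu>2C$ then makes $\big\|R(T\pm i\mu)^{-1}\big\|<1$, so $1+R(T\pm i\mu)^{-1}$ is invertible in $\End_A(E)$. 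Since $(T\pm i\mu)^{-1}$ maps $E$ into $\Dom(T)\subset\Dom(R)$, a check of domains shows that on $\Dom(T)$ one has the factorisation $T+R\pm i\mu=\big(1+R(T\pm i\mu)^{-1}\big)(T\pm i\mu)$, which is a composition of surjections onto $E$ (using that $T\pm i\mu\colon\Dom(T)\to E$ is a bijection because $T$ is regular self-adjoint); hence $T+R\pm i\mu$ is surjective.

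To conclude, I would note that symmetry of $T+R$ gives $\lla(T+R\pm i\mu)\psi\rra\geq\mu^2\lla\psi\rra$, so the surjective operator $T+R\pm i\mu$ has a bounded inverse defined on all of $E$ and is therefore closed; thus $T+R$ is closed. Equivalently $\mu^{-1}(T+R)\pm i$ is surjective, so \cite[Lemma 9.8]{Lance95} applies to the closed densely defined symmetric operator $\mu^{-1}(T+R)$ and shows it is regular and self-adjoint; hence so is $T+R$, on $\Dom(T+R)=\Dom(T)$. I do not expect a serious obstacle here: all the substance sits in \cref{prop:rel_cpt_rel_bdd_0} (which in turn rests on \cref{lem:s-limit_times_cpt}), and the only mild subtlety is that the relative bound only forces $\|R(T\pm i\mu)^{-1}\|<1$ for large $\mu$, which is why the surjectivity criterion of \cite{Lance95} is invoked for $T+R\pm i\mu$ (equivalently, after rescaling, for $\mu^{-1}(T+R)\pm i$) rather than directly for $T+R\pm i$.
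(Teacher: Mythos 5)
Your proof is correct, but it takes a more explicit route than the paper. The paper invokes \cref{prop:rel_cpt_rel_bdd_0} to get a relative $T$-bound $a<1$, and then simply cites the Kato--Rellich Theorem on Hilbert $C^*$-modules from \cite[Theorem 4.5]{KL12} to conclude. You instead re-derive Kato--Rellich from scratch in the module setting: the first-Neumann-series argument, the factorisation $T+R\pm i\mu=(1+R(T\pm i\mu)^{-1})(T\pm i\mu)$, and the Lance surjectivity criterion \cite[Lemma 9.8]{Lance95}. Both proofs rest on the same key input, the arbitrarily small relative bound from \cref{prop:rel_cpt_rel_bdd_0}; what you gain is self-containedness (no external Kato--Rellich is needed), and what you lose is brevity, since you are essentially reproving a theorem that is already in the literature in exactly this generality.

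Two small points worth tightening. First, when you say ``$1+R(T\pm i\mu)^{-1}$ is invertible in $\End_A(E)$'', you need to know that $R(T\pm i\mu)^{-1}$ is not merely bounded but \emph{adjointable} --- in a Hilbert $C^*$-module, boundedness does not imply adjointability. Here this is fine because $R(T\pm i\mu)^{-1}=R(T\pm i)^{-1}\cdot(T\pm i)(T\pm i\mu)^{-1}$ is the composition of a compact operator (by relative $T$-compactness of $R$) with a bounded function of $T$, hence adjointable (indeed compact); but it should be said. Second, the identity $\lla(T\pm i\mu)\psi\rra=\lla T\psi\rra+\mu^2\lla\psi\rra$ is really just an algebraic consequence of symmetry of $T$ (the cross terms cancel), rather than a consequence of functional calculus --- the functional-calculus route would involve domain issues with $T^2$. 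Neither affects the correctness of the argument.
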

\begin{proof}
By \cref{prop:rel_cpt_rel_bdd_0}, we have for any $0<a<1$ that $\| R\psi \| \leq a \|T\psi\| + C_a \|\psi\|$ for all $\psi\in\Dom(T)$. It then follows from the Kato-Rellich Theorem on Hilbert $C^*$-modules (\cite[Theorem 4.5]{KL12}) that $T+R$ is also regular and self-adjoint with $\Dom(T+R)=\Dom(T)$. 
\end{proof}

The following result generalises \cite[Proposition 3.4]{Les05} to the context of regular operators on Hilbert $C^*$-modules. 
\begin{prop}
\label{prop:cpt_diff_bdd-transforms}
Let $T$ be a regular self-adjoint operator on $E$, and let $R$ be a symmetric operator on $E$ which is relatively $T$-compact. 
Then $F_{T+R} - F_T$ is compact. 
\end{prop}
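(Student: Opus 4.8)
The plan is to reduce the statement to the case of a \emph{bounded} compact perturbation and then settle that case via an integral representation of the bounded transform. Throughout write $F_T := T(1+T^2)^{-\frac12}$. The starting point is the elementary identity $\frac{x}{x^2+b^2} = \frac12\big((x-ib)^{-1}+(x+ib)^{-1}\big)$, which gives $T(1+\mu^2+T^2)^{-1} = \frac12\big((T-ib)^{-1}+(T+ib)^{-1}\big)$ for $b=\sqrt{1+\mu^2}$, together with $\frac{x}{\sqrt{1+x^2}} = \frac{2}{\pi}\int_0^\infty \frac{x}{1+\mu^2+x^2}\,d\mu$. A routine functional-calculus argument (the truncated integrands are uniformly bounded and converge pointwise to $\frac{x}{\sqrt{1+x^2}}$, so they converge $*$-strongly) shows that for any regular self-adjoint $T$ the operator $F_T$ is the $*$-strong limit of the truncated integrals $\frac{2}{\pi}\int_0^\Lambda T(1+\mu^2+T^2)^{-1}\,d\mu$.

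\textbf{Step 1 (bounded compact perturbations).} First I would show: if $K=K^*\in\End^0_A(E)$, then $F_{T+K}-F_T$ is compact. Put $S:=T+K$ (regular self-adjoint on $\Dom T$) and $I(\mu):=S(1+\mu^2+S^2)^{-1}-T(1+\mu^2+T^2)^{-1}$. By the identity above and the resolvent identity $(S\pm ib)^{-1}-(T\pm ib)^{-1}=-(S\pm ib)^{-1}K(T\pm ib)^{-1}$, each $I(\mu)$ is compact, $\mu\mapsto I(\mu)$ is norm-continuous, and — because $K$ is \emph{bounded} and $\|(S\pm ib)^{-1}\|,\|(T\pm ib)^{-1}\|\leq b^{-1}$ —
\[
\|I(\mu)\| \;\leq\; \frac{\|K\|}{b^2} \;=\; \frac{\|K\|}{1+\mu^2},
\]
which is integrable over $[0,\infty)$. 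Hence the improper norm-limit $\frac{2}{\pi}\int_0^\infty I(\mu)\,d\mu$ exists, is a norm-limit of Riemann sums of compact operators, and equals $F_{T+K}-F_T$ (pair the integral representations of $F_S$ and $F_T$ against an arbitrary $\psi\in E$ and use the $*$-strong convergence of the truncations); therefore $F_{T+K}-F_T$ is compact.

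\textbf{Step 2 (general $R$).} Fix $\chi\in C_c^\infty(\R)$ with $0\leq\chi\leq1$ and $\chi\equiv1$ on $[-1,1]$, set $\chi_n(x):=\chi(x/n)$ and $R_n:=\chi_n(T)\,R\,\chi_n(T)$. Since $\chi_n$ has compact support, $\Ran\chi_n(T)\subset\bigcap_k\Dom(T^k)\subset\Dom R$, so $R\chi_n(T)=\big(R(T+i)^{-1}\big)\big((T+i)\chi_n(T)\big)$ is everywhere defined, bounded, and (as $R$ is relatively $T$-compact) compact; thus $R_n\in\End^0_A(E)$ is bounded, compact, and self-adjoint, and Step~1 gives that $F_{T+R_n}-F_T$ is compact for each $n$. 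It remains to prove $F_{T+R_n}\to F_{T+R}$ in norm, and here I would apply \cref{prop:Riesz_convergence} with $\D:=T$: both $T+R$ and $T+R_n$ are regular self-adjoint on $\Dom T$ (by \cref{prop:rel_cpt_pert_reg_sa}, noting $R_n$ is itself relatively $T$-compact), so one only needs $\big((T+R_n)-(T+R)\big)(T+i)^{-1}=(R_n-R)(T+i)^{-1}\to0$ in norm. Writing $C:=R(T+i)^{-1}\in\End^0_A(E)$ and using that $\chi_n(T)$ commutes with $(T+i)^{-1}$, one has $(R_n-R)(T+i)^{-1}=\chi_n(T)\,C\,\chi_n(T)-C$; since $\chi_n(T)\xrightarrow{*s}1$ and $C$ is compact, two applications of \cref{lem:s-limit_times_cpt} give $\chi_n(T)\,C\,\chi_n(T)\to C$ in norm. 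Consequently $F_{T+R_n}\to F_{T+R}$ in norm, and being a norm-limit of the compact operators $F_{T+R_n}-F_T$, the operator $F_{T+R}-F_T$ is compact.

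\textbf{Main obstacle.} The crux is Step~1. The integral formula for $F_T$ does \emph{not} converge in operator norm (the integrand decays only like $(1+\mu^2)^{-1/2}$), so one cannot simply difference two such integrals; integrability of $\|I(\mu)\|$ is recovered exactly from the extra factor $b^{-1}$ in $\|K(T\pm ib)^{-1}\|\leq\|K\|\,b^{-1}$, which uses boundedness of $K$ in an essential way. This is precisely why one first reduces to bounded perturbations: for the relatively $T$-compact $R$ one only has $\|R(T\pm ib)^{-1}\|\to0$ with no rate, so the integral argument cannot be run directly, whereas the truncation $R_n$ turns the problem into a bounded one while staying controllably close to $R$ at the level of $(T+i)^{-1}$.
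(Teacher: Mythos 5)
Your proof is correct and follows the same two-step architecture as the paper: first handle a truncated, bounded, compact perturbation via an integral representation of the bounded transform, then pass to the general relatively $T$-compact case using the cutoff operators $\chi_n(T)\,R\,\chi_n(T)$ together with \cref{lem:s-limit_times_cpt} and \cref{prop:Riesz_convergence}. Your Step~2 is essentially verbatim the paper's reduction. Your Step~1 is a genuine variant: the paper uses $(1+T^2)^{-\frac12}=\frac1\pi\int_0^\infty\lambda^{-\frac12}(1+\lambda+T^2)^{-1}\,d\lambda$, first peels off the visibly compact term $R\big(1+(T+R)^2\big)^{-\frac12}$, and then controls $T(1+\lambda+T^2)^{-1}\big((T+R)^2-T^2\big)\big(1+\lambda+(T+R)^2\big)^{-1}$ via $(T+R)^2-T^2=TR+R(T+R)$, which is why the paper's special case carries the extra hypothesis $\Ran R\subset\Dom T$ and gives an $\mathcal O(\lambda^{-\frac32})$ integrand. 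You instead work with $F_T=\frac2\pi\int_0^\infty T(1+\mu^2+T^2)^{-1}\,d\mu$, decompose by partial fractions, and apply only the \emph{first-order} resolvent identity $(S\pm ib)^{-1}-(T\pm ib)^{-1}=-(S\pm ib)^{-1}K(T\pm ib)^{-1}$, obtaining the integrable bound $\|K\|/(1+\mu^2)$ directly from $\|(\,\cdot\,\pm ib)^{-1}\|\le b^{-1}$. The two integral formulas are related by the substitution $\lambda=\mu^2$, so this is the same underlying idea, but your version is a bit slicker: the special case holds for any bounded compact self-adjoint $K$ with no domain restriction, and the second-order difference $(T+R)^2-T^2$ is avoided entirely. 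Your handling of the equality $\frac2\pi\int_0^\infty I(\mu)\,d\mu=F_{T+K}-F_T$ (vector-by-vector, comparing the $*$-strong truncated integrals with the norm-limit of the compact difference) is also correct and is the kind of care the argument needs, since neither $F_{T+K}$ nor $F_T$ alone is given by a norm-convergent improper integral.
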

\begin{proof}
The proof is similar to the Hilbert space proof given in \cite[Proposition 3.4]{Les05}, but requires minor adaptations. For completeness, we include the details here. 

\begin{enumerate}
\item 
We first prove a special case: assume that $R$ is compact and $\Ran R \subset \Dom T$. 
In this case it suffices to show the compactness of $F_{T+R} - F_T - R\big(1+(T+R)^2\big)^{-\frac12} = T\big(1+(T+R)^2\big)^{-\frac12} - T\big(1+T^2\big)^{-\frac12}$. 
We note that $(T+R)^2-T^2 = TR+R(T+R)$ is well-defined on $\Dom T = \Dom (T+R)$. 
We can then use the integral formula $(1+T^2)^{-\frac12} = \frac1\pi \int_0^\infty \lambda^{-\frac12} (1+\lambda+T^2)^{-1} \dx[\lambda]$ (and similarly for $T+R$) along with the resolvent identity to rewrite 
\begin{align*}
&T\big(1+(T+R)^2\big)^{-\frac12} - T\big(1+T^2\big)^{-\frac12} \\
&\quad= \frac1\pi \int_0^\infty \lambda^{-\frac12} T \big(1+\lambda+T^2\big)^{-1} \big( T^2 - (T+R)^2 \big) \big( 1+\lambda+(T+R)^2 \big)^{-1} \dx[\lambda] \\
&\quad= - \frac1\pi \int_0^\infty \lambda^{-\frac12} T \big(1+\lambda+T^2\big)^{-1} \big( TR + R(T+R) \big) \big( 1+\lambda+(T+R)^2 \big)^{-1} \dx[\lambda] .
\end{align*}
Observing that the integrand is compact and of order $\mO(\lambda^{-\frac32})$, 
we see that the integral converges in norm to a compact operator, and we conclude that $F_{T+R} - F_T$ is compact. 

\item 
We now prove the general case by reducing to the special case. 
For $n\in\N$, consider compactly supported continuous functions $\phi_n \in C_c(\R)$ satisfying $0 \leq \phi_n(x) \leq 1$ for all $x\in\R$, $\phi_n(x) = 1$ if $|x| \leq n$, and $\phi_n(x) = 0$ if $|x| \geq n+1$. 
Using continuous functional calculus, we construct the operators $\phi_n(T) \in \mL_A(E)$, and we note that $\Ran \phi_n(T) \subset \Dom T$ (since $\phi_n$ is compactly supported). 
We now consider the operators 
\[
R_n := \phi_n(T) R \phi_n(T) = \underbrace{\phi_n(T)}_{\text{bounded}} \, \underbrace{R(T-i)^{-1}}_{\text{compact}} \; \underbrace{(T-i)\phi_n(T)}_{\text{bounded}} ,
\]
and we note that $R_n$ is compact with $\Ran R_n \subset \Dom T$. Thus the special case applies to $R_n$. 

As $n\to\infty$, the operators $\phi_n(T)$ converge strongly (hence, by self-adjointness, $*$-strongly) to the identity (see e.g.\ \cite[Lemma 7.2]{KL12}). 
Since $R(T-i)^{-1}$ is compact, it follows from \cref{lem:s-limit_times_cpt} that $R_n(T-i)^{-1}$ converges in norm to $R(T-i)^{-1}$. 
From \cref{prop:Riesz_convergence} we therefore obtain that $F_{T+R_n}$ converges in norm to $F_{T+R}$. 
Since $F_{T+R_n}-F_T$ is compact by the special case, we conclude that also $F_{T+R}-F_T$ is compact. 
\qedhere 
\end{enumerate}
\end{proof}

\begin{prop}
\label{prop:cpt_diff_cts_funct}
Let $T$ be a regular self-adjoint operator on $E$, and let $R$ be a symmetric operator on $E$ which is relatively $T$-compact. 
Let $f\in C(\R)$ be a continuous function for which the limits $\lim_{x\to\pm\infty}f(x)$ exist. 
Then $f(T+R)-f(T)$ is compact. 
\end{prop}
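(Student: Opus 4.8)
The plan is to reduce the statement to the already-established compactness of $F_{T+R}-F_T$ from \cref{prop:cpt_diff_bdd-transforms} by expressing $f$ through the bounded transform. Write $g(x):=x(1+x^2)^{-\frac12}$, so that $F_S=g(S)$ for every regular self-adjoint $S$, and recall that $g$ is a homeomorphism of $\R$ onto the open interval $(-1,1)$. Since the limits $\ell_\pm:=\lim_{x\to\pm\infty}f(x)$ exist by hypothesis, the function $h:=f\circ g^{-1}$ defined on $(-1,1)$ extends to a continuous function $h\in C([-1,1])$ with $h(\pm1)=\ell_\pm$; by construction $f=h\circ g$. Since $R$ is symmetric and relatively $T$-compact, $T+R$ is again regular self-adjoint by \cref{prop:rel_cpt_pert_reg_sa}, so $F_{T+R}$ is defined, and $f(T)=h(F_T)$ and $f(T+R)=h(F_{T+R})$. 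Note that $F_T,F_{T+R}\in\End_A(E)$ are self-adjoint with spectrum contained in $[-1,1]$.

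It then suffices to prove the general fact: if $A,B\in\End_A(E)$ are self-adjoint with $\spec(A),\spec(B)\subseteq[-1,1]$ and $A-B\in\End^0_A(E)$, then $h(A)-h(B)\in\End^0_A(E)$ for every $h\in C([-1,1])$. First I would treat polynomials: for each $k\in\N$ one has $A^k-B^k=\sum_{j=0}^{k-1}A^j(A-B)B^{k-1-j}$, which lies in $\End^0_A(E)$ because $\End^0_A(E)$ is a two-sided ideal of $\End_A(E)$; hence $p(A)-p(B)$ is compact for every polynomial $p$. For general $h$, choose by the Stone--Weierstrass theorem polynomials $p_n\to h$ uniformly on $[-1,1]$. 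Since the continuous functional calculus is norm-nonincreasing, $\|h(A)-p_n(A)\|\leq\|h-p_n\|_{\infty,[-1,1]}\to0$, and similarly for $B$; thus $p_n(A)-p_n(B)\to h(A)-h(B)$ in norm, and since $\End^0_A(E)$ is norm-closed, $h(A)-h(B)$ is compact. Applying this with $A=F_{T+R}$ and $B=F_T$ --- whose difference is compact by \cref{prop:cpt_diff_bdd-transforms} --- gives $f(T+R)-f(T)=h(F_{T+R})-h(F_T)\in\End^0_A(E)$.

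I do not expect a serious obstacle here. The only point deserving care is the identity $f=h\circ g$: one must check that $h$ genuinely extends continuously across the endpoints $\pm1$, which is precisely where the hypothesis on the existence of the limits of $f$ at $\pm\infty$ enters, and one should note that $f(T)=h(F_T)$ is simply the definition of the (bounded continuous) functional calculus for the regular self-adjoint operator $T$ via its bounded transform.
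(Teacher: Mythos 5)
Your proof is correct, and it takes a genuinely different (though related) route from the paper's. Both proofs hinge on \cref{prop:cpt_diff_bdd-transforms}, i.e.\ the compactness of $F_{T+R}-F_T$, but they organise the reduction differently. The paper decomposes $f$ \emph{linearly} as $f = c_1 + c_2\,b + g$ with $b(x)=x(1+x^2)^{-1/2}$ and $g\in C_0(\R)$ (choosing $c_1,c_2$ from the limits $\ell_\pm$), observes that constants are trivial and $b$ is exactly \cref{prop:cpt_diff_bdd-transforms}, and then handles the $C_0(\R)$ part by reducing (implicitly via Stone--Weierstrass and the fact that the set of ``good'' $f$'s is the equaliser of two $*$-homomorphisms into the Calkin algebra, hence closed) to $f(x)=(x\pm i)^{-1}$, for which the resolvent identity $(T+R\pm i)^{-1}-(T\pm i)^{-1}=-(T+R\pm i)^{-1}R(T\pm i)^{-1}$ gives compactness directly. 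You instead factor $f=h\circ g$ through the bounded transform $g$, noting that the existence of the limits $\ell_\pm$ is exactly what makes $h$ extend continuously to $[-1,1]$, so that $f(T)=h(F_T)$ and $f(T+R)=h(F_{T+R})$ by the very definition of the functional calculus; then you reduce to a clean, self-contained lemma about bounded self-adjoint $A,B$ with $A-B$ compact, proved by the telescoping identity $A^k-B^k=\sum_{j}A^j(A-B)B^{k-1-j}$ together with the ideal property of $\End_A^0(E)$ and polynomial approximation. Your route avoids having to check separately the closedness of the ``good'' set and the generation of $C_0(\R)$ by the resolvent functions, packaging everything through a single application of Stone--Weierstrass on $[-1,1]$; the paper's route is somewhat shorter and has the virtue of making the compactness of the resolvent difference explicit. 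One could also streamline your final lemma by passing to the Calkin algebra ($\pi(A)=\pi(B)$ implies $\pi(h(A))=h(\pi(A))=h(\pi(B))=\pi(h(B))$ since $\pi$ is a $*$-homomorphism), but your polynomial argument is perfectly adequate.
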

\begin{proof}
The statement clearly holds for constant functions, and by \cref{prop:cpt_diff_bdd-transforms} also for the `bounded transform function' $b\in C(\R)$ given by $b(x) := x(1+x^2)^{-\frac12}$. It remains to prove the statement for functions $f\in C_0(\R)$ vanishing at infinity, and for this it suffices to consider $f(x) = (x\pm i)^{-1}$. But for the latter, the statement follows immediately from the resolvent identity and compactness of $R(T\pm i)^{-1}$:
\[
(T+R\pm i)^{-1} - (T\pm i)^{-1} = - (T+R\pm i)^{-1} R (T\pm i)^{-1} .
\qedhere
\]
\end{proof}

The following result partly generalises \cite[Corollary 3.5]{Les05} to the context of regular operators on Hilbert $C^*$-modules, under somewhat stronger assumptions. 
\begin{coro}
\label{coro:cpt_diff_spec-projs}
Let $T$ be a regular self-adjoint operator on $E$, and let $R$ be a symmetric operator on $E$ which is relatively $T$-compact. 
Assume that $T$ and $T+R$ are both invertible. 
Then the difference of positive spectral projections $P_+(T+R) - P_+(T)$ is compact. 
\end{coro}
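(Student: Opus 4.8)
The plan is to realise both spectral projections $P_+(T)$ and $P_+(T+R)$ by applying a \emph{single} continuous function to $T$ and to $T+R$ respectively, and then to invoke \cref{prop:cpt_diff_cts_funct}. First I would note that $T+R$ is regular and self-adjoint on $\Dom(T)$ by \cref{prop:rel_cpt_pert_reg_sa}, so that $P_+(T+R)$ is well-defined.

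The crucial observation is the existence of a common spectral gap around $0$. Since $T$ and $T+R$ are both invertible, $0$ lies in neither $\spec(T)$ nor $\spec(T+R)$; as both spectra are closed subsets of $\R$, there is a $\delta>0$ with $(-\delta,\delta)\cap\big(\spec(T)\cup\spec(T+R)\big)=\emptyset$. I would then fix a function $p_+\in C(\R)$ with $0\leq p_+\leq1$, $p_+(x)=0$ for $x\leq-\delta/2$, and $p_+(x)=1$ for $x\geq\delta/2$; the limits $\lim_{x\to-\infty}p_+(x)=0$ and $\lim_{x\to+\infty}p_+(x)=1$ exist, so $p_+$ falls within the scope of \cref{prop:cpt_diff_cts_funct}. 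On $\spec(T)$ this $p_+$ coincides with $\chi_{(0,\infty)}$ (both equal $0$ on $\spec(T)\cap(-\infty,-\delta]$ and $1$ on $\spec(T)\cap[\delta,\infty)$), so by continuous functional calculus $p_+(T)=\chi_{(0,\infty)}(T)=P_+(T)$; the same choice of $p_+$ gives $p_+(T+R)=P_+(T+R)$, precisely because the gap $(-\delta,\delta)$ was chosen to avoid $\spec(T+R)$ as well.

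Applying \cref{prop:cpt_diff_cts_funct} with $f=p_+$ then yields that $P_+(T+R)-P_+(T)=p_+(T+R)-p_+(T)$ is compact, as claimed. The only step with any content is the common-gap argument — everything hinges on having both projections expressed through the same admissible $f$ — after which compactness is immediate; note that \cref{prop:cpt_diff_cts_funct} in turn rests on \cref{prop:cpt_diff_bdd-transforms} and \cref{prop:Riesz_convergence}, so no further work is required here.
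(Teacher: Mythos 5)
Your proof is correct and is essentially the same as the paper's: identify a common spectral gap $(-\delta,\delta)$ around $0$ arising from invertibility of $T$ and $T+R$, express both spectral projections as $f(T)$ and $f(T+R)$ for a single $f\in C(\R)$ with limits at $\pm\infty$ that agrees with $\chi_{(0,\infty)}$ on the union of the spectra, and apply \cref{prop:cpt_diff_cts_funct}. The only cosmetic difference is that you let the transition of $p_+$ happen on the smaller interval $[-\delta/2,\delta/2]$ rather than on the full gap.
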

\begin{proof}
Since $T$ and $T+R$ are invertible, there exists an $\epsilon>0$ such that $(-\epsilon,\epsilon)$ does not intersect with the union $\spec(T) \cup \spec(T+R)$ of the spectra of $T$ and $T+R$. 
Then the positive spectral projections can be defined via continuous functional calculus, i.e., we can take $\chi \in C(\R)$ with $\chi|_{(-\infty,-\epsilon]} \equiv 0$ and $\chi|_{[\epsilon,\infty)} \equiv 1$ and see that $P_+(T) = \chi(T)$ and $P_+(T+R) = \chi(T+R)$. The statement then follows from \cref{prop:cpt_diff_cts_funct}. 
\end{proof}

A regular operator $T$ on $E$ is called \emph{Fredholm} if there exists a \emph{parametrix} $Q$ such that (the closure of) $QT - 1$ and $TQ - 1$ are compact operators on $E$. 
We recall that an odd resp.\ even regular self-adjoint Fredholm operator $T$ on a possibly $\Z_2$-graded Hilbert $A$-module $E$ yields a well-defined class $[T]$ in $\KK^0(\C,A) \simeq \K_0(A)$ resp.\ $\KK^1(\C,A) \simeq \K_1(A)$; for details of the construction, we refer to \cite[\S2.2]{vdD19_Index_DS}. 

Our last result shows that this $\K$-theory class is stable under relatively compact perturbations. 

\begin{prop}
\label{prop:KK-class_rel_cpt_pert}
Let $T$ be a regular self-adjoint Fredholm operator on $E$, and let $R$ be a symmetric operator on $E$ which is relatively $T$-compact. 
Then: 
\begin{enumerate}
\item 
\label{item:rel_cpt_pert_reg_sa}
$T+R$ is also regular, selfadjoint, and Fredholm, and any parametrix for $T$ is also a parametrix for $T+R$. 
\item 
$[T+R] = [T] \in \KK^p(\C,A) \simeq \K_p(A)$ (where $p=0$ if $R,T$ are odd, and $p=1$ otherwise). 
\end{enumerate}
\end{prop}
\begin{proof}
\begin{enumerate}
\item 
We first note that $T+R$ is also regular and selfadjoint by \cref{prop:rel_cpt_pert_reg_sa}. 
If $Q \in \mL_A(E)$ is a parametrix for $T$, then it is also a parametrix for $T+R$, since 
\[
(T+R)Q-1 = (TQ-1) + R(T-i)^{-1} (T-i)Q
\]
is compact. 
Similarly, also $Q(T+R)-1$ is compact. 
\item 
Let $T_t := T + tR$, and consider the operator $T_\bullet = \{T_t\}_{t\in[0,1]}$ on the Hilbert $C([0,1],A)$-module $C([0,1],E)$. 
Since $t \mapsto T_t\psi$ is continuous for each $\psi\in\Dom(T)$, we know that $T_\bullet$ is regular and self-adjoint (\cite[Lemma 1.15]{DM20}). 

If $Q \in \mL_A(E)$ is a parametrix for $T$, then by \ref{item:rel_cpt_pert_reg_sa} it is also a parametrix for $T_t$ for each $t\in[0,1]$, since $tR$ is relatively $T$-compact. Consequently, noting that $t \mapsto tRQ$ is norm-continuous, the constant family $Q_\bullet = \{Q\}_{t\in[0,1]}$ is a parametrix for $T_\bullet$. 
Hence $T_\bullet$ is a regular self-adjoint Fredholm operator on the Hilbert $C([0,1],A)$-module $C([0,1],E)$ and therefore a \emph{homotopy} between $T$ and $T+R$ (in the sense of \cite[Definition 2.13]{vdD19_Index_DS}). Thus $[T]=[T+R]$ by \cite[Proposition 2.14]{vdD19_Index_DS}. 
\qedhere 
\end{enumerate}
\end{proof}

In the $\Z_2$-graded case, where we have a decomposition $E = E_+ \oplus E_-$ and $T$ is odd (i.e., maps $E_\pm \to E_\mp$), the class $[T] \in \KK^0(\C,A)$ corresponds to the $K_0(A)$-valued index of $T_+ := T|_{E_+} \colon E_+ \to E_-$ under the isomorphism $\KK^0(\C,A) \simeq \K_0(A)$. Thus, in this case, the above result translates into the stability of the $\K_0(A)$-valued index under relatively compact perturbations.


\providecommand{\noopsort}[1]{}
\providecommand{\bysame}{\leavevmode\hbox to3em{\hrulefill}\thinspace}
\providecommand{\MR}{\relax\ifhmode\unskip\space\fi MR }
\providecommand{\MRhref}[2]{%
  \href{http://www.ams.org/mathscinet-getitem?mr=#1}{#2}
}
\providecommand{\href}[2]{#2}
\providecommand{\doi}[1]{\href{https://doi.org/#1}{doi:#1}}
\providecommand{\doilinktitle}[2]{#1}
\providecommand{\doilinkbooktitle}[2]{\href{https://doi.org/#2}{#1}}
\providecommand{\doilinkjournal}[2]{\href{https://doi.org/#2}{#1}}
\providecommand{\doilinkvynp}[2]{\href{https://doi.org/#2}{#1}}
\providecommand{\eprint}[2]{#1:\href{https://arxiv.org/abs/#2}{#2}}

\end{document}